\documentclass[12pt,reqno]{amsart}
\usepackage[utf8]{inputenc}
\usepackage{colortbl}
\usepackage{hhline}
\usepackage{tikz}
\usepackage{float}
\usepackage{amsmath,amsthm,amsfonts,amssymb}
\usepackage{color}
\usepackage{cases}
\usepackage{wasysym}
\usepackage{empheq}
\usepackage{graphicx}
\usepackage[english]{babel}
\usepackage[numbers]{natbib}
\usepackage{ytableau}
\usepackage{yfonts}

\textwidth15.6cm 
\textheight23cm 
\hoffset-1.7cm 
\voffset-.5cm

\numberwithin{equation}{section}

\newtheorem{theorem}{Theorem}
\numberwithin{theorem}{section}
\newtheorem{proposition}[theorem]{Proposition}
\newtheorem{lemma}[theorem]{Lemma}
\newtheorem{corollary}[theorem]{Corollary}

\theoremstyle{definition}
\newtheorem{definition}[theorem]{Definition}

\theoremstyle{remark}
\newtheorem*{remark}{Remark}

\newtheorem{example}[theorem]{Example}

\newcommand{\noteB}[1]{ { \sffamily \color{red} \normalsize #1} }

\def\lcm{\operatorname{lcm}}

\date{\today}
%\addbibresource{ref.bib}
\usepackage{subfiles}
\title{An Orthogonal View of Gau\ss ian Polynomials}
\author{Christian Krattenthaler}
\address{Fakult\"at f\"ur Mathematik,
Universit\"at Wien,
Oskar-Morgenstern-Platz 1,
A-1090 Vienna AUSTRIA}
\email{Christian.Krattenthaler@univie.ac.at}
\author{Brandt Kronholm}
\address{School of Mathematical and Statistical Sciences,
  University of Texas Rio Grande Valley,
  Edinburg, Texas 78539-2999 USA}
\email{brandt.kronholm@utrgv.edu}
\author{Paul Marsh}
\address{School of Mathematical and Statistical Sciences,
  University of Texas Rio Grande Valley,
  Edinburg, Texas 78539-2999 USA}
\email{paul.marsh01@utrgv.edu}

%-----------------------------------------------------------------------------------------
%-------AESTHETIC CHANGES:
  
  \renewcommand{\geq}{\geqslant}
%-----------------------------------------------------------------------------------------
%-------ABBREVIATIONS:
  		%  empty set.
  		%  subset or equal.
  			%  proper subset.
%-----------------------------------------------------------------------------------------
%-------SETS OF NUMBERS:
  \newcommand{\N}[0]{\mathbb{N}}		%  Natural ##s
  \newcommand{\Z}[0]{\mathbb{Z}}		%  Integer ##s
  		%  Rational ##s
  		%  Real ##s
  	%  Extended Real ##s
  \newcommand{\C}[0]{\mathbb{C}}		%  Complex ##s
  	%  Extended Complex ##s
  		%  Sphere
  			%  1-Sphere
%-----------------------------------------------------------------------------------------
%-------LOGICAL STATEMENTS:
  	%  for all ...
  	%  there exists	...
  	%  then ...
%  \newcommand{\iff}[0]{\Longleftrightarrow}	%  if and only if ...
%-----------------------------------------------------------------------------------------

  	%  Absolute Value |.|
  			%  Absolute Value (text)
  		%  Norm ||.||
  	%  Inner Product <.,.>
  		%  Inner Product (text)
    %  Inner Product Norm
  			%  Diff.
  	%  nth order diff.
  			%  Der.
  		%  2nd order diff.
  	%  p. diff.
    %  2nd Order P. Diff.
    %  nth Order P. Diff.
    %  2* P.Der.
  			%  Partial Derivative
  			%  Partial Derivative
  		%  2* Partial Derivative
  		%  Function Definition
% A couple of notes on these definitions, those with a `t' appended to
% the end are intended for use in the short math expression environments
% they get rid of the enlarging of the delimiters and make everything
% nice and compact.
%	DOCUMENT SPECIFIC NOTATION:
  		%  Tangential unit vector
  		%  Torsional unit vector

\newcommand{\m}[0]{\mathcal{M}}

\def\om{\omega}
\def\fl#1{\lfloor#1\rfloor}

\def\coef#1{\left\langle#1\right\rangle}
\def\qbin#1#2{\left[\begin{smallmatrix} #1\\#2\end{smallmatrix}\right]}
\def\Qbin#1#2{\begin{bmatrix} #1\\#2\end{bmatrix}}

%	MISC.:

%	MEASURE THEORIC NOTATIONS:
  			% sigma-...
  			% arbitrary class of sets.
  			% arbitrary ring.
  			% arbitrary sigma ring.
  			% arbitrary sigma algebra.
  			% arbitrary hereditary class.
  			% integral of 1 wrt 2.
  			% integral of 2 wrt 3 over 1.

%-----------------------------------------------------------------------------------------
%-----------------------------------------------------------------------------------------

\begin{document}

% Keywords and phrases:
\keywords{Integer partition, Gau\ss ian polynomial, generating function, partition identity, unimodality}
  
%% Mathematical classification (2020)
%% See https://mathscinet.ams.org/mathscinet/msc/msc2020.html
%% This will not be printed but can be useful for database search
\subjclass[2010]{Primary 11P81; Secondary 05A17, 05A15, 05A19}

\begin{abstract}
We establish an alternative, ``perpendicular" collection of generating\break functions for the coefficients of Gau\ss ian polynomials,
$\qbin{N+m}{ m}_q$.  
We provide a general characterization of these perpendicular generating functions.  
For small values of~$m$, unimodality of the coefficients of Gau\ss ian polynomials is easily proved from these generating functions.  
Additionally, we uncover new and surprising identities for the differences of Gau\ss ian polynomial coefficients, including a very unexpected infinite family of congruences for coefficients of $\qbin{N+4}{ 4}_q$.  
\end{abstract}
 
\maketitle

\section{Introduction}\label{Intro}

In this paper we establish an {\it alternative} collection of generating functions for the coefficients of Gau\ss ian polynomials.  
While it may be unexpected that a completely new set of generating functions should exist, they come about by making use of an overlooked technique in partitions noted independently by H.~Gupta~\cite{GuptaTechnique,gupta} in 1975, but known and well studied about a decade earlier by E.~Ehrhart~\cite{BEK-6j-1,ehrhartpolynomial} in the area of polyhedral geometry.  %, whereby polynomial formulas for certain restricted partition functions are obtained from the generating functions.   
An analysis of these alternative generating functions --- {\it perpendicular}, as we shall call them
frequently due to their nature of collecting coefficients --- has the
happy by-product of new proofs for the unimodality of the coefficients
of Gau\ss ian polynomials $\qbin{N+m}m_q$ for small values of~$m$.  
Following this, we establish a collection of surprising difference identities for partitions with bounded largest part and bounded number of parts.  
%These alternative generating functions lead us to conclude with a conjecture: Differences of Gau\ss ian polynomial coefficients are equal to the number of partitions into parts bounded only by size. 

\subsection{Standard generating functions for Gau\ss ian polynomials}

Before we go any further, we define the {\it Gau\ss ian polynomial},
also known as the {\it $q$-binomial coefficient}.  

\begin{definition}\label{GP}
For $m,N\ge0$ the expression below is known as a Gau\ss ian polynomial or a $q$-binomial coefficient,  
\begin{equation}\label{ratfun}
\Qbin{N+m}{ m}_q
=\frac 
{(q;q)_{N+m}}{(q;q)_m(q;q)_N}
= \frac{\left(q^{N+1};q\right)_{m}}{(q;q)_m},
\quad \text{for } m,N\ge0,
 \end{equation}
where $(z;q)_a:=(1-z)(1-zq)(1-zq^2)\cdots(1-zq^{a-1})$ if $a$ is a
positive integer, and $(z;q)_0:=1$. 
\end{definition}

The coefficients of Gau\ss ian polynomials have a well-known interpretation in terms of integer partitions.  

\begin{definition}%\cite{andrews}
    A {\it partition} of a positive integer $n$ is a finite nonincreasing sequence of positive integers $\lambda_1,\lambda_2,...,\lambda_r$ such that $\sum^r_{i=1} \lambda_i =n$. 
  The $\lambda_i$ are called the parts of the partition. 
  
%For all $n,m,N\ge0$, we have the following partition functions: 
In this paper we will make use of the following two partition functions:
\begin{itemize}
%    \item $p(n)$ enumerates the number of partitions of the nonnegative integer $n$,
    \item $p(n,m)$: enumerates the partitions of $n$ into at most $m$ parts, and
    \item $p(n,m,N)$: enumerates the partitions of $n$ into $m$ parts with no part larger than~$N$.  
\end{itemize}
\end{definition} 
\begin{proposition}\label{GP=p(n,m,N)}
For $n,m,N\ge0$, the Gau\ss ian polynomial $\qbin{N+m}{ m}_q$ is the generating function for %the number of partitions of $n$ into at most $m$ parts with no part larger than $N$, denoted by 
$p\big(n,m,N\big)$; that is,
\begin{equation}
    \Qbin{N+m}{ m}_q=\sum_{n=0}^{mN}p\big(n,m,N\big)q^n.
\end{equation}
\end{proposition}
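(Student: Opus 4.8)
The plan is to prove the identity by induction, interpreting $p(n,m,N)$ as the number of partitions of $n$ whose Young diagram fits inside an $m\times N$ rectangle --- that is, partitions with at most $m$ positive parts, each of size at most $N$ (a partition ``into $m$ parts'' being allowed to have parts equal to $0$). Both sides of the claimed identity will be shown to obey the same recurrence with the same initial data. Throughout I write $F_{m,N}(q):=\sum_{n\ge0}p(n,m,N)\,q^n$ for the combinatorial generating function, so that the goal is $F_{m,N}(q)=\qbin{N+m}{m}_q$.

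First I would record the boundary cases. When $m=0$ or $N=0$ the only admissible partition is the empty one, so $F_{0,N}(q)=F_{m,0}(q)=1$, which matches $\qbin{N}{0}_q=\qbin{m}{m}_q=1$ as read off from Definition~\ref{GP}. Next I would establish the combinatorial recurrence. Given a partition $\lambda$ counted by $p(n,m,N)$ with $m,N\ge1$, I split into two cases according to its number of parts. If $\lambda$ has at most $m-1$ parts it is counted by $p(n,m-1,N)$. If $\lambda$ has exactly $m$ positive parts, then subtracting $1$ from each part is a bijection onto the partitions of $n-m$ having at most $m$ parts, each of size at most $N-1$, i.e.\ onto those counted by $p(n-m,m,N-1)$. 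Hence
\[
p(n,m,N)=p(n,m-1,N)+p(n-m,m,N-1),
\]
and multiplying by $q^n$ and summing over $n$ yields the generating-function recurrence
\[
F_{m,N}(q)=F_{m-1,N}(q)+q^m\,F_{m,N-1}(q).
\]

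It then remains to check that the Gau\ss ian polynomials satisfy the matching recurrence
\[
\qbin{N+m}{m}_q=\qbin{N+m-1}{m-1}_q+q^m\,\qbin{N+m-1}{m}_q,
\]
noting that $\qbin{N+m-1}{m-1}_q$ is attached to the $(m-1)\times N$ box and $\qbin{N+m-1}{m}_q=\qbin{(N-1)+m}{m}_q$ to the $m\times(N-1)$ box, exactly the two reduced boxes produced by the combinatorial split. This is the $q$-analogue of Pascal's rule; I would verify it directly from the product form in~\eqref{ratfun} by clearing the denominator $(q;q)_m=(1-q^m)(q;q)_{m-1}$ and using $(q^{N+1};q)_m=(1-q^{N+m})(q^{N+1};q)_{m-1}$ together with $(q^{N};q)_m=(1-q^N)(q^{N+1};q)_{m-1}$, after which both sides collapse to $(1-q^{N+m})(q^{N+1};q)_{m-1}$. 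Since $F_{m,N}(q)$ and $\qbin{N+m}{m}_q$ share the same boundary values and the same recurrence, induction on $m+N$ completes the proof.

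The argument is essentially routine, so I do not anticipate a serious obstacle. The two points requiring genuine care are the index bookkeeping --- verifying that the boxes appearing in the combinatorial split correspond to the correct Gau\ss ian polynomials, since the two parameters shift asymmetrically --- and the verification that the ``subtract $1$ from every part'' map is genuinely a bijection between the stated sets, which relies on the fact that a partition with \emph{exactly} $m$ positive parts has all $m$ parts available to be decremented (so that the image has at most $m$ parts, each at most $N-1$, and the map is invertible by adding $1$ back to each of its $m$ slots).
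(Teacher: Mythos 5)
Your argument is correct: the boundary cases, the combinatorial split according to whether the partition has exactly $m$ positive parts, the resulting recurrence $F_{m,N}=F_{m-1,N}+q^mF_{m,N-1}$, and the matching $q$-Pascal rule for $\qbin{N+m}{m}_q$ all check out, and induction on $m+N$ closes the proof. The paper does not prove this proposition itself but defers to Andrews' Theorem~3.1, whose proof is exactly this standard recurrence-plus-induction argument, so your approach coincides with the cited one.
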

A proof of Proposition~\ref{GP=p(n,m,N)} can be found
in~\cite[Theorem~3.1]{andrews1998theory}.  
Clearly, for $0\le n\le mN$,  $p\big(n,m,N\big)>0$, otherwise, $p\big(n,m,N\big)=0$.  
Hence, $\qbin{N+m}{ m}_q$ is a polynomial of degree $mN$ with $mN+1$ terms.

\begin{example}\label{4}
    For a given $N$, a Gau\ss ian polynomial $\qbin{N+4}{4}_q$ is computed by expanding the following rational function and arriving at the associated generating function for $p(n,4,N)$:
    \begin{equation}
        \Qbin{N+4}{4}_q = \frac{(1-q^{N+1})(1-q^{N+2})(1-q^{N+3})(1-q^{N+4})}{(1-q)(1-q^2)(1-q^3)(1-q^4)} = \sum_{n=0}^{4N}p(n,4,N)q^n.
    \end{equation}
\end{example}

In this paper we establish entirely new ``perpendicular'' generating functions for $\qbin{N+m}{ m}_q$.  % for $m=1,2,3,4,5,6$.  
For example, for a given $N$, expansion of the generating function below in Proposition~\ref{C_4a} recovers the Gau\ss ian polynomial $\qbin{N+4}{ 4}_q$ by collecting the terms as $-2N\le A\le 2N$. 

\begin{proposition}\label{C_4a}
For all $A\ge0$, we have
\begin{equation*}
  \sum_{N=0}^{\infty}p\big(2N-A,4,N\big)z^N =\begin{cases}
\displaystyle
\frac{z^{a}\left(1+z^2-z^{a+1}\right)}{\left(1-z\right)^2\left(1-z^2\right)\left(1-z^3\right)},&
\text{if }A=2a,\\
\displaystyle
\frac{z^{a}\left(z+z^2-z^{a+2}\right)}{\left(1-z\right)^2\left(1-z^2\right)\left(1-z^3\right)},&
\text{if }A=2a+1.
      \end{cases}
\end{equation*}
    
\end{proposition}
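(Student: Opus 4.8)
The plan is to recognise the left-hand side as a single coefficient extraction from a classical generating function, and then to reduce everything to an elementary lattice-point count. Write $P_N(q)=\qbin{N+4}{4}_q=\sum_n p(n,4,N)\,q^n$. Since $p(2N-A,4,N)=[q^{2N-A}]P_N(q)=[q^{-A}]\bigl(q^{-2N}P_N(q)\bigr)$, summing against $z^N$ gives
\[
\sum_{N\ge0}p(2N-A,4,N)\,z^N=[q^{-A}]\sum_{N\ge0}P_N(q)\,(zq^{-2})^N=[q^{-A}]\,H(q,zq^{-2}),
\]
where $H(q,w)=\sum_{N\ge0}\qbin{N+4}{4}_q w^N$. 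I would read off $H$ from the classical identity $\sum_{N\ge0}\qbin{N+m}{m}_q w^N=1/(w;q)_{m+1}$ (here $m=4$), which yields $H(q,w)=1/(w;q)_5$ and hence, after $w=zq^{-2}$,
\[
H(q,zq^{-2})=\prod_{i=-2}^{2}\frac{1}{1-zq^i}.
\]
Everything here is a bona fide formal power series in $z$ whose $z^N$-coefficient is a Laurent polynomial in $q$, so the extraction $[q^{-A}]$ is well defined and commutes with the summation.

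Next I would expand each of the five factors as a geometric series in $z$, producing $\sum z^{a+b+c+d+e}q^{-2a-b+d+2e}$ over $a,b,c,d,e\ge0$. Extracting $[q^{-A}]$ forces $2a+b-d-2e=A$, while the exponent-free variable $c$ sums freely to $1/(1-z)$; thus
\[
F_A(z):=\sum_{N\ge0}p(2N-A,4,N)\,z^N=\frac{1}{1-z}\sum_{\substack{a,b,d,e\ge0\\ 2a+b-d-2e=A}}z^{a+b+d+e}.
\]
To evaluate the inner sum I would first sum over $b,d$ with $b-d$ held fixed (a one-dimensional geometric series contributing $z^{\,|b-d|}/(1-z^2)$), and then sum over $a,e$ with $t:=a-e$ held fixed (another such series contributing $z^{\,|t|}/(1-z^2)$); since $b-d=A-2t$ on the constraint, this collapses the whole count to
\[
F_A(z)=\frac{1}{(1-z)(1-z^2)^2}\sum_{t\in\Z}z^{\,|t|+|A-2t|}.
\]

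The crux is the evaluation of $\sum_{t\in\Z}z^{\,|t|+|A-2t|}$, and this is where the two cases of the statement are born: the breakpoints of $t\mapsto|t|+|A-2t|$ sit at $t=0$ and $t=A/2$, so the middle stretch behaves differently according to the parity of $A$. I would split the sum at these breakpoints into a lower tail, a finite middle range, and an upper tail, summing the two tails as geometric series with ratio $z^3$ and the middle as a finite geometric sum. For $A=2a$ the elementary algebra collapses the sum to $z^a(1+z)(1+z^2-z^{a+1})/(1-z^3)$, and for $A=2a+1$ to the analogous numerator over $(1-z^3)$. Substituting back, the factor $(1+z)$ cancels one power of $(1+z)$ from $(1-z^2)^2$, and because $(1-z)^3(1+z)(1-z^3)=(1-z)^2(1-z^2)(1-z^3)$ the denominator reduces exactly to the stated form, while the numerators become precisely $z^a(1+z^2-z^{a+1})$ and $z^a(z+z^2-z^{a+2})$. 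The main obstacle is purely bookkeeping: keeping the absolute-value case analysis and the finite middle sum straight, and verifying that the $(1+z)$ and $(1-z)$ factors cancel to give the compact denominator rather than its expanded cousin. A single low-degree numerical check (for instance the central coefficients $p(2N,4,N)=1,1,3,\dots$ tested against the $A=0$ series) guards against sign errors.
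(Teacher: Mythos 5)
Your proposal is correct, and I verified the key computations: the reduction to $\frac{1}{(1-z)(1-z^2)^2}\sum_{t\in\Z}z^{|t|+|A-2t|}$ is right, and splitting that sum at $t=0$ and $t=\lfloor A/2\rfloor$ does yield $\frac{z^a(1+z)(1+z^2-z^{a+1})}{1-z^3}$ for $A=2a$ and $\frac{z^a(1+z)(z+z^2-z^{a+2})}{1-z^3}$ for $A=2a+1$, after which the $(1+z)$ cancellation produces exactly the stated rational functions. Your route shares its point of departure with the paper's: both rest on the $q$-binomial theorem in the form $\sum_{N\ge0}\qbin{N+4}{4}_q w^N=1/(w;q)_5$ evaluated at $w=zq^{-2}$, which is precisely the expression $1/(zq^{-M};q)_{2M+1}$ with $M=2$ appearing in the proof of Theorem~\ref{thm:even}. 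From there the two arguments diverge completely. The paper extracts $[q^{-A}]$ by writing the coefficient as a contour integral, performing a partial fraction decomposition in $z$, applying the residue theorem (only the poles $\om_{|j|}^{\ell}z^{-1/j}$ with $j<0$ contribute), and then recognizing the sum over $\ell$ as a $j$-dissection; this machinery is what makes the general structural statement of Theorem~\ref{thm:even} (denominator $(1-z^2)(z;z)_{2M-1}$, numerator built from dissections) available for every $M$ uniformly, with Proposition~\ref{C_4b} falling out as the case $M=2$. Your argument instead expands the five geometric factors and counts lattice points subject to $2a+b-d-2e=A$, collapsing everything to a one-parameter sum over $t$. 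What you gain is a fully elementary, self-contained proof with no complex analysis and no appeal to the quasipolynomiality argument the paper needs to justify that the answer is rational in $z$ (your formal-power-series bookkeeping handles this directly); what you lose is scalability, since for general $m$ the analogous constrained sum lives in $m-2$ free parameters and the piecewise-linear case analysis on the breakpoints becomes unwieldy, whereas the residue/dissection method mechanizes cleanly. As a proof of this specific proposition, yours is arguably the more transparent of the two.
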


The alternative collection of generating functions that we consider produces the coefficients of $\qbin{N+m}{ m}_q$ for all $N$ and a fixed $m$ depending on how far the coefficient is from the {\em center} of the Gau\ss ian polynomial.  
For all $N$, the generating function for $p(n,4,N)$ in Example~\ref{4} generates the same coefficients as that of Proposition~\ref{C_4a} for all $A$.
(Proposition~\ref{C_4a} will be restated as Proposition~\ref{C_4b} later and proved there.)

With modest computing power, we have obtained perpendicular generating functions for $m=1,2,\dots,12$.   
Our methods extend to all $m\in\N$.

\subsection{Background material}
To produce this alternative collection of generating functions for Gau\ss ian polynomials, we review some well-known facts and establish a few definitions.  

It is well known that Gau\ss ian polynomials are reciprocal polynomials.    
In other words, the coefficients of a Gau\ss ian polynomial form a palindrome. 

\begin{definition}\label{reciprocal}
A polynomial $P(q)=a_0+a_1q+a_2q^2+\cdots +a_dq^d$ is called {\em reciprocal} if for each $i$, $a_i=a_{d-i}$, equivalently, if $q^dP(q^{-1})=P(q)$.   
\end{definition}

\begin{example}\label{ex33}
Two Gau\ss ian polynomials:% ${3+3\brack 3}$: % is a reciprocal polynomial - the coefficients are symmetric.    
    \begin{multline}\label{ex33eq}
        \Qbin{3+3}{ 3}_q=\frac{(q;q)_{6}}{(q;q)_3(q;q)_3}=1+q+2q^2+3q^3+3q^4+3q^5+3q^6+2q^7+q^8+q^9
        \\=\sum_{m=0}^{9}p\left(n,3,3\right)q^n.
    \end{multline}
%The Gau\ss ian polynomial ${3+4\brack 4}$.
\begin{multline}\label{ex43eq}
    \Qbin{3+4}{ 4}_q=\frac{\left(q;q \right)_7}{\left(q;q \right)_4\left(q;q \right)_3}
    \\ = 1+q+2q^2+3q^3+4q^4+4q^5+5q^6+4q^7+4q^8+3q^9+2q^{10}+q^{11}+q^{12}
    \\=\sum_{m=0}^{12}p\left(n,4,3\right)q^n.
\end{multline}
Noting that the coefficient of $q^{10}$ is 2, we see that there are two partitions of~$10$ into at most four parts with no part larger than $3$ and we write $p(10,4,3)=2$.  
The relevant partitions are $3+3+3+1$ and $3+3+2+2$.  
Since Gau\ss ian polynomials are reciprocal, we also have $p(2,4,3)=2$, and the relevant partitions are $2$ and $1+1$. 
\end{example}

Gau\ss ian polynomials have one, sometimes two, ``middle" or {\em central}
terms.  
Since we require an unambiguous single coefficient to be our {\em central} coefficient, we provide a definition. 
 %choose $p\left(\left\lfloor\frac{mN}{2}\right\rfloor,m,N \right)$ to be the central coefficient.  
\begin{definition}\label{centralcoeff}
    We define $p\left(\left\lfloor\frac{mN}{2}\right\rfloor,m,N \right)$ to be the {\em central} coefficient of $\qbin{N+m}{ m}_q$.
\end{definition}

\begin{example}\label{centraldef}
    In line \eqref{ex33eq} of Example~\ref{ex33}, there are exactly two coefficients in the middle of $\qbin{3+3}{ 3}_q$: $p\left(4,3,3 \right)\mbox{ and }p\left(5,3,3 \right).$   
Adhering to Definition \ref{centralcoeff}, we select the term $p\left(\left\lfloor\frac{3\times3}{2}\right\rfloor,3,3 \right)q^{\left\lfloor\frac{3\times3}{2}\right\rfloor}=p(4,3,3)q^4$, so that $p(4,3,3)=3$ is the central coefficient in this case.  
In line~\eqref{ex43eq} of Example~\ref{ex33}, there is a single middle term, and so the central coefficient is $p\left(\left\lfloor\frac{3\times4}{2}\right\rfloor,3,4 \right)=p\big(6,3,4\big)=5$. 
\end{example}  

%We extend our notion of  central coefficient from Definition \ref{centralcoeff}.
\begin{remark}\label{A} 
Let $A$ be an integer.  
Since $\qbin{N+m}{ m}_q$ is reciprocal, we note that for $m\times N$ even, \begin{equation}\label{palindromeE}
p\left(\left\lfloor\frac{mN}{2}\right\rfloor-A,m,N \right)=p\left(\left\lfloor\frac{mN}{2}\right\rfloor+A,m,N \right), 
\end{equation}
and for $m\times N$ odd,
\begin{equation}\label{palindromeO}
p\left(\left\lfloor\frac{mN}{2}\right\rfloor-A,m,N \right)=p\left(\left\lfloor\frac{mN}{2}\right\rfloor+A+1,m,N \right).
\end{equation}
\end{remark}

\begin{example}\label{central-Aex}
    In line \eqref{ex33eq} of Example~\ref{ex33}, we again examine the two coefficients in the middle of $\qbin{3+3}{ 3}_q$.  
    By Remark \ref{A}, for $A=0$ we obtain the central coefficient as\break
    $p\left(\left\lfloor\frac{3\times3}{2}\right\rfloor-0,3,3 \right) = p\left(4,3,3 \right)$.  
    Since $3\times3$ is odd we have 
    $p\left(4,3,3 \right)= p\left(5,3,3 \right)$ by~\eqref{palindromeO}.   
\end{example}

Remark \ref{A} is our starting point for creating these alternative
--- perpendicular --- generating functions.  
Table \ref{p(n,4,N)TABLE} displays the first eight polynomials $\qbin{N+4}{ 4}_q$, for $0\le N\le 7$, ``stacked" around the central coefficient $p(2N,4,N)$.  
The generating functions that we produce are not for a single Gau\ss ian polynomial $\qbin{N+m}{ m}_q$ for given a pair $m$ and $N$, but rather describe the sequence of coefficients $p\left(\left\lfloor\frac{mN}{2}\right\rfloor-A,m,N \right)$ of all Gau\ss ian polynomials for a given pair $m$ and $A$ {\em for all} $N$.  
In this light we say that the alternative generating functions we produce are ``perpendicular" to the standard generating functions.

\begin{table}[H]
\small
{\begin{center}
\begin{tabular}{@{}cc@{}}
${\qbin{0+4}{ 4}}_q=$    & \!\!\!\!\!\!\!\!$1$\\[2pt]
${\qbin{1+4}{ 4}}_q=$   & $1~+~q~+~q^2+~q^3~+~q^4$ \\[2pt]
${\qbin{2+4}{ 4}}_q=$& $1+q+2 q^2+2 q^3+3 q^4+2 q^5+2 q^6+q^7+q^8$ \\[2pt]
${\qbin{3+4}{ 4}}_q=$&  $~~~1+q+2 q^2+3 q^3+4 q^4+4 q^5+5 q^6+4 q^7+4 q^8+3 q^9+2 q^{10}+q^{11}+q^{12}$\\[2pt]
${\qbin{4+4}{ 4}}_q=$&  $~\cdots+3q^4 +5 q^4+5 q^5+7 q^6+7 q^7+8 q^8+7 q^9+7q^{10}+5q^{11}+5 q^{12}+3q^{13}+\cdots$\\[2pt]
${\qbin{5+4}{ 4}}_q=$&  $~~~\cdots+8 q^6+9 q^7+11 q^8+11 q^9+12 q^{10}+11 q^{11}+11 q^{12}+9 q^{13}+8 q^{14}+\cdots$\\[2pt]
${\qbin{6+4}{ 4}}_q=$&$\cdots+13 q^8+14 q^9+16 q^{10}+16 q^{11}+18 q^{12}+16 q^{13}+16 q^{14}+14 q^{15}+13 q^{16}+\cdots$\\
${\qbin{7+4}{ 4}}_q=$&
$\cdots+19 q^{10}+20 q^{11}\!+23 q^{12}\!+23 q^{13}\!+24 q^{14}+23 q^{15}+23 q^{16}+20 q^{17}+19 q^{18}+\cdots$\\
\hspace{12pt}\vdots \hspace{12pt}=& \vdots~~~
\end{tabular}
\end{center}}
\caption{The sequence of Gau\ss ian polynomials 
  ${\protect\qbin{N+4}{ 4}}_q$
  arranged with respect to their central coefficients.  
   The sequence of central coefficients is $\{1,1,3,5,8,12,18,24,\ldots\}$ which is reflected in the generating function in Example~\ref{p(n,4,N)ex} where $A=0$.  
    Similarly, the sequence of coefficients ``one-away" from the central coefficient is $\{0,1,2,4,7,11,16,23,\ldots\}$ and corresponds to the generating function in Example~\ref{p(n-1,4,N)ex} where $A=1$.
%    Note also that Theorem \ref{p(2N-1-2,4,N)} is exemplified here.
}
    \label{p(n,4,N)TABLE}
\end{table}

The possibilities of this area of investigation were indicated in~\cite{LSAMPpn3N} and, in several ways, this article is an overdue followup of~\cite{LSAMPpn3N}.  
Example \ref{p(n,4,N)ex}, below, was initially established in Equation (4.27) in~\cite{LSAMPpn3N}.  
\begin{example}[\cite{LSAMPpn3N}]\label{p(n,4,N)ex}
For any $N$, the central coefficient of $\qbin{N+4}{ 4}_q$ is $p(2N,4,N)$.  
The generating function for $p(2N,4,N)$ is
\begin{multline}\label{p(2N,4,N)exeq}
    \sum_{N=0}^{\infty}p\big(2N,4,N\big)z^N = \frac{1-z+z^2}{\left(1-z\right)^2\left(1-z^2\right)\left(1-z^3\right)} = 1+z+3 z^2+5 z^3+8 z^4\\+12 z^5+18 z^6+24 z^7 +33 z^8+43 z^9+55 z^{10}+69 z^{11}+86 z^{12}+104 z^{13}+126 z^{14}+ \cdots.
\end{multline}
\end{example}
Compare the coefficients in \eqref{p(2N,4,N)exeq} to the sequence of central coefficients in Table \ref{p(n,4,N)TABLE}.

Example \ref{p(n-1,4,N)ex}, below, is the generating function for the coefficients that ``precede" the central coefficient", or better, $A=1$, of Gau\ss ian polynomials $\qbin{N+4}{ 4}_q$ and is new.   

\begin{example}\label{p(n-1,4,N)ex}
For any $N$, the generating function for $p(2N-1,4,N)$ is:
\begin{multline}\label{p(2N-1,4,N)exeq}
    \sum_{N=0}^{\infty}p\big(2N-1,4,N\big)z^N = \frac{z}{\left(1-z\right)^2\left(1-z^2\right)\left(1-z^3\right)} = z+2 z^2+4 z^3+7 z^4\\+11 z^5+16 z^6+23 z^7+31 z^8+41 z^9+53 z^{10}+67 z^{11}+83 z^{12}+102 z^{13}+123 z^{14}+\cdots.
\end{multline}
\end{example}
Again, compare the coefficients in \eqref{p(2N-1,4,N)exeq} to the sequence of coefficients immediately to the left of the central coefficients in Table \ref{p(n,4,N)TABLE}.

By setting $a=0$, Example \ref{p(n,4,N)ex} and Example~\ref{p(n-1,4,N)ex} are extracted from the perpendicular generating function for $\qbin{N+4}{ 4}_q$ in Proposition~\ref{C_4a}.  

\subsection{How this paper is structured}\label{sectionstructure}

In Section \ref{establishing} we present our main results for our
perpendicular partition generating functions, separately for even~$m$
and for odd~$m$; see Theorems~\ref{thm:even} and~\ref{thm:odd}.
We illustrate these general results by displaying the corresponding
results for $m=1,2,\dots,6$, which we obtained with the implementation
of the results in the accompanying
{\sl Mathematica} Notebook {\tt orthview.nb}.

After the procedure is established, we follow up with short proofs of
unimodality in Section~\ref{uni}.   
In Section~\ref{ID} we prove many unexpected identities for the differences of Gau\ss ian polynomial coefficients for $N=3,4,5,6$.  
Included in these observations is a very short proof of Proposition~\ref{4isD3}; line~\eqref{repp(2N-1-2,4,N)eq2} is of interest to Lie Algebraists.

\begin{proposition}\label{4isD3}
Let $N$ be any nonnegative integer.  
Then
\begin{gather}\label{repp(2N-1-2,4,N)eq1}
p\big(2N,4,N \big)-p\big(2N-1,4,N \big)=
p\big(N,3\big)-p\big(N-1,3\big),\\
\label{repp(2N-1-2,4,N)eq2}        
p(2N-1,4,N)-p(2N-2,4,N) = 0
\end{gather}
\end{proposition}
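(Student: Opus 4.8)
The plan is to reduce both identities to equalities of rational generating functions, all of which are furnished directly by Proposition~\ref{C_4a}. First I would record the three relevant specializations: $A=0$ (so $a=0$), $A=1$ (so $a=0$), and $A=2$ (so $a=1$). These give
\[
\sum_{N\ge0}p\big(2N,4,N\big)z^N=\frac{1-z+z^2}{(1-z)^2(1-z^2)(1-z^3)},\qquad
\sum_{N\ge0}p\big(2N-1,4,N\big)z^N=\frac{z}{(1-z)^2(1-z^2)(1-z^3)},
\]
while the $A=2$ case simplifies, since its numerator $z^a(1+z^2-z^{a+1})$ at $a=1$ is $z(1+z^2-z^2)=z$, to
\[
\sum_{N\ge0}p\big(2N-2,4,N\big)z^N=\frac{z}{(1-z)^2(1-z^2)(1-z^3)}.
\]

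The identity \eqref{repp(2N-1-2,4,N)eq2} then falls out immediately: the generating functions for $A=1$ and for $A=2$ are literally the \emph{same} rational function. Hence $p(2N-1,4,N)=p(2N-2,4,N)$ for every nonnegative integer $N$, which is exactly the assertion that their difference vanishes. No further work is needed beyond observing the collapse of the $A=2$ numerator.

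For \eqref{repp(2N-1-2,4,N)eq1} I would subtract the $A=1$ series from the $A=0$ series and factor the numerator:
\[
\sum_{N\ge0}\big(p(2N,4,N)-p(2N-1,4,N)\big)z^N
=\frac{(1-z+z^2)-z}{(1-z)^2(1-z^2)(1-z^3)}
=\frac{(1-z)^2}{(1-z)^2(1-z^2)(1-z^3)}
=\frac{1}{(1-z^2)(1-z^3)}.
\]
Separately, using the classical generating function $\sum_{n\ge0}p(n,3)z^n=\frac{1}{(1-z)(1-z^2)(1-z^3)}$ for partitions into at most three parts, the shifted difference has generating function
\[
\sum_{N\ge0}\big(p(N,3)-p(N-1,3)\big)z^N=(1-z)\cdot\frac{1}{(1-z)(1-z^2)(1-z^3)}=\frac{1}{(1-z^2)(1-z^3)}.
\]
Since both sides are the identical rational function $\frac{1}{(1-z^2)(1-z^3)}$, comparing coefficients of $z^N$ yields \eqref{repp(2N-1-2,4,N)eq1}.

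I do not expect a genuine obstacle: once Proposition~\ref{C_4a} is available, the whole argument is a short manipulation of rational functions, and the ``very short proof'' promised in the text is precisely this. The only points demanding care are purely clerical: tracking the index shift $p(N-1,3)$ (equivalently the factor $1-z$) and confirming that the double factor $(1-z)^2$ in the $m=4$ denominator is exactly what collapses the $m=4$ difference onto the $m=3$ partition difference. That cancellation is the conceptual heart of the statement, explaining why a difference of $m=4$ coefficients should coincide with a difference of ordinary partition numbers.
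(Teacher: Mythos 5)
Your proposal is correct and follows essentially the same route as the paper: the paper derives the general identities $\Delta p(2N-2a,4,N)=\Delta_{a+1}p(N-a,3)$ and $\Delta p(2N-(2a+1),4,N)=\Delta_{a}p(N-2-a,3)$ by subtracting successive perpendicular generating functions from Proposition~\ref{C_4b} (its equations \eqref{U41}--\eqref{U42}) and then sets $a=0$, which is exactly your computation specialized to $A=0,1,2$. Your observation that the $(1-z)^2$ in the denominator is what collapses the $m=4$ difference onto the $p(n,3)$ generating function is precisely the cancellation the paper exploits in passing from \eqref{C_4eq} to \eqref{U41}.
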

%\begin{equation}\label{p(2N-1-2,4,N)eq}    p(2N-2,4,N)=p(2N-1,4,N)=p(2N+1,4,N)=p(2N+2,4,N).\end{equation}

Line \eqref{repp(2N-1-2,4,N)eq1} can be read as {\em the difference between the largest and second largest coefficient of any Gau\ss ian polynomial $\qbin{N+4}{ 4}_q$ is the same as the difference between partition of a number half the size into at most three parts}. 
Line~\eqref{repp(2N-1-2,4,N)eq2} of Proposition~\ref{4isD3} can be read as {\em four of the five coefficients {\em in the middle} of any Gau\ss ian polynomial $\qbin{N+4}{ 4}_q$ are always the same}. %, or {\em Gau\ss ian polynomials $N+4\brack 4$ are never strictly unimodal}.
Another interpretation of~\eqref{repp(2N-1-2,4,N)eq2} comes from an independent proof by D.~Burde and F.~Wagemann: {\em The adjoint $\textswab{s:l}_2(\C)$-module $V_2$ does not occur in $\Lambda^4(V_{k+3})$ for all $k\geq 1$}~\cite{BurdeWagemann}.  
In Section~\ref{subsection4} we show that Proposition~\ref{4isD3} is a quick corollary to a very general result.  

%They further describe their result (Corollary 3.14) as {\em remarkable} in their recent preprint ``Cohomology of Perfect Lie Algebras": https://arxiv.org/abs/2411.14952v1.

Regardless of interpretations, the reader can examine Table~\ref{p(n,4,N)TABLE} for some reassuring evidence supporting Proposition~\ref{4isD3}.

\section{Main results}
\label{establishing}

Here we present our formulas for the perpendicular generating functions
$$\sum_{N=0}^{\infty} p\left(\left\lfloor \frac{mN}{2}
  \right\rfloor-A,m,N\right)z^{N}.$$
For the statement of the results, we need the notion of
{\it $s$-dissection} $(S_sP)(z)$ of a polynomial $P(z)=\sum_{i=0}^d a_iz^i$,
which is defined as
$$(S_sP)(z):=\sum_{i=0}^{\fl{d/s}}a_{is}z^i.
$$
In other words, the $s$-dissection takes a polynomial $P(z)$ and builds
a new polynomial $(S_sP)(z)$ by taking every $s$-th coefficient
of~$P(z)$ and ignoring all the other coefficients. It is easy to see
(and well-known) how to express the $s$-dissection in terms of the
original polynomial.

\begin{lemma} \label{lem:s}
For a polynomial $P(z)$, we have
$$
(S_sP)(z)=\frac {1} {s}\sum_{\ell=0}^{s-1}P(\om_s^\ell z^{1/s}),
$$
where $\om_s$ is a primitive $s$-th root of unity.
\end{lemma}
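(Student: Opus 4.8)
The plan is to use the standard root-of-unity filter. I would start from the right-hand side and substitute the explicit expansion $P(z)=\sum_{k=0}^{d}a_kz^k$, so that each summand becomes $P(\om_s^\ell z^{1/s})=\sum_{k=0}^{d}a_k\,\om_s^{\ell k}\,z^{k/s}$. Averaging over $\ell$ and interchanging the two finite sums gives
$$\frac{1}{s}\sum_{\ell=0}^{s-1}P(\om_s^\ell z^{1/s})=\sum_{k=0}^{d}a_k z^{k/s}\left(\frac{1}{s}\sum_{\ell=0}^{s-1}\om_s^{\ell k}\right),$$
so the whole computation reduces to evaluating the inner average $\frac{1}{s}\sum_{\ell=0}^{s-1}\om_s^{\ell k}$.

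The key ingredient is the orthogonality relation for $s$-th roots of unity: this inner average equals $1$ when $s\mid k$ and $0$ otherwise. I would prove it by a one-line geometric-series argument. If $s\mid k$, then $\om_s^{\ell k}=(\om_s^{s})^{\ell k/s}=1$ for every $\ell$, so the average is $1$. If $s\nmid k$, then $\om_s^k\ne1$ (because $\om_s$ is primitive), and $\sum_{\ell=0}^{s-1}(\om_s^k)^\ell=\frac{(\om_s^k)^s-1}{\om_s^k-1}=0$, since $(\om_s^k)^s=(\om_s^s)^k=1$.

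Feeding this back in, only the indices $k$ divisible by $s$ survive. Writing $k=is$ with $0\le i\le\fl{d/s}$, each surviving term contributes $a_{is}z^{is/s}=a_{is}z^i$, and the sum collapses to $\sum_{i=0}^{\fl{d/s}}a_{is}z^i$, which is exactly $(S_sP)(z)$ by definition. This completes the argument.

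I do not expect a genuine obstacle here, as the computation is routine once the orthogonality relation is in hand. The only point that deserves care is the formal meaning of the fractional power $z^{1/s}$: a priori each factor $P(\om_s^\ell z^{1/s})$ depends on a choice of branch of $z^{1/s}$, and one must fix a single branch and use it uniformly across all $\ell$. This causes no difficulty for the final statement, however, precisely because the filtering annihilates every term with $s\nmid k$, leaving only the integer powers $z^i$; thus the right-hand side is well defined and independent of the branch, matching the honest polynomial $(S_sP)(z)$.
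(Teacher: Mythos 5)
Your proof is correct and is precisely the standard root-of-unity filter argument that the paper invokes implicitly (the paper states the lemma as ``easy to see (and well-known)'' and gives no written proof). Your additional remark about fixing a branch of $z^{1/s}$ uniformly in $\ell$ is a sensible point of care and does not affect the conclusion, since only integer powers of $z$ survive the filtering.
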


\medskip
If $m$ is even, we have the following result.

\begin{theorem} \label{thm:even}
Let $M$ be a positive integer and $a$ and $r$ be nonnegative integers.
Furthermore define $A_M:=\lcm(1,2,\dots,M)$. 
Then the partition generating function
$\sum_{N=0}^{\infty} p\left(MN
  -(A_M a+r),2M,N\right)z^{N}$ is equal to
$$
\frac {\text{\em Num}_e(M,r)} {(1-z^2)(z;z)_{2M-1}},
$$
where the numerator $\text{\em Num}_e(M,r)$ is given by
\begin{equation} \label{eq:even} 
\sum_{j=1}^M(-1)^{M - j}
z^{A_M a/j} S_j\left(
z^{r + \binom {M - j + 1}2}
  \frac {(1 - z^{2 j})\,(z^j;z^j)_{2M-1}} {(z;z)_{2M}}
           \Qbin{2 M}{ M - j}_z\right).
\end{equation}
\end{theorem}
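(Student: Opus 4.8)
The plan is to convert the perpendicular generating function into the extraction of a single Laurent coefficient of an explicit symmetric product, and then to read that coefficient off by a residue computation organized according to families of poles, each family producing one $j$-dissection. First I would recall the classical expansion $\sum_{N\ge0}\Qbin{N+2M}{2M}_q z^N=1/(z;q)_{2M+1}=\prod_{i=0}^{2M}(1-zq^i)^{-1}$. Substituting $z\mapsto zq^{-M}$ recenters this to the palindromically symmetric product $G(q,z):=\prod_{i=-M}^{M}(1-zq^i)^{-1}=\sum_{N,n}p(n,2M,N)\,z^N q^{\,n-MN}$. Writing $k:=A_Ma+r$ and using reciprocity (the center of $\Qbin{N+2M}{2M}_q$ is $MN$, so $p(MN+k,2M,N)=p(MN-k,2M,N)$), I get the clean reduction
$$\sum_{N=0}^{\infty}p\big(MN-(A_Ma+r),2M,N\big)z^N=[q^{-k}]\,G(q,z),$$
which is the form I will actually evaluate. (Extracting $[q^{-k}]$ rather than $[q^{k}]$ is a deliberate choice explained below.)

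Next I would extract this coefficient by residues, treating $z$ as a small generic parameter so that all poles of $G$ in $q$ are simple. Since $[q^{-k}]G=\frac{1}{2\pi i}\oint_{|q|=1}G(q,z)\,q^{k-1}\,dq$, the value equals the sum of residues of $G(q,z)q^{k-1}$ over the poles inside $|q|=1$. These ``inner'' poles are exactly the zeros $q_0=\om_j^{\ell}z^{1/j}$ of the factors $1-zq^{-j}$, for $j=1,\dots,M$ and $\ell=0,\dots,j-1$; this is precisely where the summation range $j=1,\dots,M$ in \eqref{eq:even} comes from. Here the choice of $[q^{-k}]$ pays off: near $q=0$ one has $G(q,z)q^{k-1}=O\!\left(q^{\,M(M+1)/2+k-1}\right)$, so for $k\ge0$ there is no residue at the origin and the inner poles above are the only contributions.

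I would then compute the family-$j$ contribution. At a simple pole $q_0=\om_j^{\ell}z^{1/j}$ the residue is $q_0\phi(q_0)/j$, where $\phi$ collects the remaining factors; summing over $\ell$ and using $q_0^{\,j}=z$ to rewrite every occurrence of $z$ as $q_0^{\,j}$, the sum $\tfrac1j\sum_{\ell}(\,\cdot\,)\big|_{q_0=\om_j^{\ell}z^{1/j}}$ is recognized, via Lemma~\ref{lem:s}, as the $j$-dissection $(S_j\,\cdot\,)(z)$ of an honest one-variable power series. Two features should fall out here automatically. The $a$-dependence factors out as the integer power $z^{A_Ma/j}$ (integer because $j\mid A_M$ for $j\le M$), since $q_0^{\,k}=\om_j^{\ell k}z^{k/j}$ and $\om_j^{A_Ma}=1$; this is exactly why $A_M=\lcm(1,\dots,M)$ is the correct modulus. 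And pulling the negative-exponent factors $1-q_0^{\,j-i}$ (those with $i>j$) into standard form produces both the sign $(-1)^{M-j}$ and the exponent $\binom{M-j+1}{2}$. After simplification the family-$j$ term should read $\frac{z^{A_Ma/j}}{1-z}(-1)^{M-j}(S_j\Psi_j')(z)$ with $\Psi_j'(z)=z^{\,r+\binom{M-j+1}{2}}(1-z^j)\big/\big((z;z)_{M+j}(z;z)_{M-j}\big)$, obtained by consolidating the products $\prod(1-q_0^{\,j+i})$, $\prod(1-q_0^{\,j-i})$ into $q$-shifted Pochhammer symbols.

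Finally I would reconcile this with the stated numerator by clearing the common denominator $(1-z^2)(z;z)_{2M-1}$. Using $(1-z^2)/(1-z)=1+z$ together with the dissection identity $S_s\big(P(z)\,R(z^s)\big)=(S_sP)(z)\,R(z)$ applied with $R(z^j)=(1+z^j)(z^j;z^j)_{2M-1}$, the factor $(1+z)(z;z)_{2M-1}$ moves inside $S_j$, and $(1-z^j)(1+z^j)=1-z^{2j}$ combine to turn $\Psi_j'$ into $z^{\,r+\binom{M-j+1}{2}}(1-z^{2j})(z^j;z^j)_{2M-1}\big/\big((z;z)_{M+j}(z;z)_{M-j}\big)$, which is exactly the argument of $S_j$ in \eqref{eq:even} after writing $\Qbin{2M}{M-j}_z=(z;z)_{2M}\big/\big((z;z)_{M-j}(z;z)_{M+j}\big)$. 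I expect the main obstacle to be the residue bookkeeping in the third step: keeping the $(-1)^{M-j}$ sign, the power $\binom{M-j+1}{2}$, and the Pochhammer consolidations straight, and above all justifying that $\tfrac1j\sum_\ell(\,\cdot\,)\big|_{q_0=\om_j^\ell z^{1/j}}$ is legitimately a dissection, i.e. that after setting $z=q_0^{\,j}$ the summand is a genuine one-variable power series to which Lemma~\ref{lem:s} applies. The analytic scaffolding (simplicity of poles for generic $z$, vanishing of the residue at $q=0$, and passage to a formal-power-series identity in $z$ by continuation) is routine but must be stated with care.
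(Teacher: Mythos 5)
Your proposal is correct and takes essentially the same route as the paper's proof in Section~\ref{CKproof}: both reduce the problem to extracting the coefficient of $q^{-A}$ from $\prod_{i=-M}^{M}(1-zq^i)^{-1}$ via a contour integral, pick up precisely the residues at the interior poles $q=\om_j^{\ell}z^{1/j}$ ($j=1,\dots,M$, $\ell=0,\dots,j-1$), and recognize each $\ell$-sum as a $j$-dissection through Lemma~\ref{lem:s}, with the same bookkeeping producing $(-1)^{M-j}$, $\binom{M-j+1}{2}$, and the denominator $(1-z^2)(z;z)_{2M-1}$ via the divisibility facts in Lemmas~\ref{lem:3}--\ref{lem:5}. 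The only cosmetic differences are that the paper arrives at the product by applying the $q$-binomial theorem inside the integral and performs a partial-fraction decomposition in $z$ before invoking the residue theorem, whereas you start from the known generating function and evaluate the residues directly on the product for generic~$z$.
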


\begin{remark}
(1) The proof of this theorem is given in Section~\ref{CKproof}.
In particular, it follows from that proof that the expression
in~\eqref{eq:even} of which the $j$-dissection~$S_j$ is taken is
indeed a polynomial in~$z$.

\medskip
(2) As the theorem shows, the generating function
$\sum_{N=0}^{\infty} p\left(MN
-A,2M,N\right)z^{N}$ is rational, and all the roots
of the denominator are roots of unity. It is a well-known fact
(cf.\ Proposition~\ref{prop:2}) that these properties imply that
the coefficients of the considered power series are quasipolynomial
(see Definition~\ref{def:quasi}). Consequently, the partition
numbers $p\left(MN -A,2M,N\right)$ are quasipolynomial in~$N$.
\end{remark}

\medskip
For the case where $m$ is odd, we have the following result.

\begin{theorem} \label{thm:odd}
Let $M$ be a positive integer and $a$ and $r$ be nonnegative integers.
Furthermore define $B_M:=\lcm(1,3,\dots,2M-1)$. 
Then the partition generating function
$\sum_{N=0}^{\infty} p\left(\left\lfloor\frac {(2M-1)N} {2}\right\rfloor
  -(B_M a+r),2M-1,N\right)z^{N}$ is equal to
$$
\frac {\text{\em Num}_o(M,r)} {(1-z)(z^2;z^2)_{2M-2}},
$$
where the numerator $\text{\em Num}_o(M,r)$ is given by
\begin{multline} \label{eq:odd} 
\sum_{j=1}^M(-1)^{M - j}
z^{2B_M a/(2j-1)}\\
\times S_{2j-1}\left(
z^{2r + 2\binom {M - j + 1}2}
\frac {(1 - z^{2 j - 1}) \,
  (z^{2(2j-1);z^{2(2j-1)}})_{2M-2}}
   {(1-z)\,(z^4;z^2)_{2M-2}}
           \Qbin{2 M-1}{ M - j}_z\right).
\end{multline}
\end{theorem}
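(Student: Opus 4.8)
The plan is to prove Theorem~\ref{thm:odd} by the same coefficient-extraction strategy that underlies Theorem~\ref{thm:even}, adapted to the odd modulus $m=2M-1$. First I would translate the partition number into a coefficient of the Gau\ss ian polynomial and center it. Writing $\hat g_N(Q):=Q^{-mN}\qbin{N+m}{m}_{Q^2}=\sum_{n}p(n,m,N)\,Q^{2n-mN}$, this is a Laurent polynomial symmetric under $Q\mapsto Q^{-1}$, all of whose exponents are congruent to $N$ modulo $2$ (because $m$ is odd). Using the reciprocity recorded in Remark~\ref{A} --- in the form \eqref{palindromeE} for even $N$ and \eqref{palindromeO} for odd $N$ --- one checks that
\[
p\!\left(\fl{mN/2}-A,\,m,\,N\right)=\bigl[Q^{\,2A+(N\bmod 2)}\bigr]\hat g_N(Q),
\]
so the whole perpendicular generating function becomes a single power-of-$Q$ extraction performed uniformly within each parity class of~$N$.

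Next I would pass to a bivariate generating function. Substituting $q=Q^2$ into the standard identity $\sum_{N\ge0}\qbin{N+m}{m}_q z^N=1/(z;q)_{m+1}$ and replacing $z$ by $zQ^{-m}$ to absorb the centering factor gives
\[
\Phi(z,Q):=\sum_{N\ge 0}\hat g_N(Q)\,z^N=\prod_{i=0}^{2M-1}\frac{1}{1-zQ^{2i-(2M-1)}}=\prod_{j=1}^{M}\frac{1}{\bigl(1-zQ^{\,2j-1}\bigr)\bigl(1-zQ^{-(2j-1)}\bigr)},
\]
where in the last step I pair the factor indexed by~$i$ with the one indexed by $m-i$, so that the exponents appear as the symmetric pairs $\pm(2j-1)$, $j=1,\dots,M$. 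The odd moduli $2j-1$ emerging here are exactly what force the definition $B_M=\lcm(1,3,\dots,2M-1)$, in contrast to the even case where the moduli are $j$ and one uses $A_M=\lcm(1,\dots,M)$. To isolate the two parities of~$N$ I would split $\Phi$ as $\tfrac12\bigl(\Phi(z,Q)\pm\Phi(-z,Q)\bigr)$, extract $[Q^{2A}]$ from the even part and $[Q^{2A+1}]$ from the odd part, and add the results to obtain $\sum_N p(\fl{mN/2}-A,m,N)z^N$.

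The core computation is the extraction of a single power of~$Q$ from~$\Phi$. For one symmetric pair one has the elementary evaluation
\[
\bigl[Q^{\ell}\bigr]\frac{1}{\bigl(1-zQ^{d}\bigr)\bigl(1-zQ^{-d}\bigr)}=\begin{cases}z^{\,|\ell|/d}/(1-z^2),&d\mid \ell,\\[2pt]0,&\text{otherwise},\end{cases}
\]
whose ``$d\mid\ell$'' support is precisely a $d$-dissection in the variable~$z$; for the full product the single constraint on the total $Q$-degree couples the factors, and resolving it through Lemma~\ref{lem:s} applied at the $(2j-1)$-st roots of unity is what produces the operators $S_{2j-1}$ in \eqref{eq:odd}. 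The numerator expansion $\prod_{i=1}^{m}(1-q^{N+i})=\sum_{k}(-1)^k q^{\binom{k+1}{2}}\qbin{m}{k}_q q^{kN}$ (the finite $q$-binomial theorem) supplies the signs $(-1)^{M-j}$ and the Gau\ss ian factors $\qbin{2M-1}{M-j}_z$ together with the shift $z^{2\binom{M-j+1}{2}}$, while collecting the poles of the various geometric series over a common denominator yields $(1-z)(z^2;z^2)_{2M-2}$. Finally, writing $A=B_M a+r$ separates the linear-in-$a$ part, which contributes only a pure shift $z^{2B_M a/(2j-1)}$ in the $j$-th summand --- an integer exponent exactly because $2j-1\mid B_M$ --- from the residue~$r$, which is absorbed into the argument $z^{2r+2\binom{M-j+1}{2}}$ of $S_{2j-1}$; this is the source of the mixed-radix index in the statement.

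The main obstacle is this last extraction and its bookkeeping: one must carry out the simultaneous coefficient extraction in~$Q$ from the coupled product, choose correctly the direction of expansion for the factors with negative exponents, and track how the several $z$-dissections assemble into exactly the single operator $S_{2j-1}$ applied to a polynomial. In particular one must verify --- as asserted in the remark following Theorem~\ref{thm:even} for the even case --- that the argument of each $S_{2j-1}$ in \eqref{eq:odd} is genuinely a polynomial in~$z$, so that the dissection is well defined, and then match the resulting rational function term-by-term with $\mathrm{Num}_o(M,r)/\bigl((1-z)(z^2;z^2)_{2M-2}\bigr)$. I expect the algebra to run in close parallel to the proof of Theorem~\ref{thm:even}, the only essential new feature being the half-integer centering, which is handled cleanly by the parity split above.
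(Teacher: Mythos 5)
Your plan is essentially the paper's proof in different clothing: your constant-term extraction of $[Q^{\ell}]$ from $\Phi(z,Q)=\prod_{j=1}^{M}\bigl(1-zQ^{2j-1}\bigr)^{-1}\bigl(1-zQ^{-(2j-1)}\bigr)^{-1}$ is exactly the paper's contour integral in $q=Q^{2}$ after applying the $q$-binomial theorem, your parity split $\tfrac12\bigl(\Phi(z,Q)\pm\Phi(-z,Q)\bigr)$ is its bisection of that theorem, and the partial-fraction/roots-of-unity step feeding into Lemma~\ref{lem:s} is the same residue computation that produces the $(2j-1)$-dissections, the signs, the shifts $z^{2\binom{M-j+1}{2}}$, the Gau\ss ian factors, and the denominator $(1-z)(z^{2};z^{2})_{2M-2}$. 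The one point to tighten is that the signs and $\qbin{2M-1}{M-j}$ factors arise in the paper from the partial-fraction coefficients of $1/(\pm zq^{-M+1/2};q)_{2M}$ (the dual of the finite $q$-binomial expansion you cite), and the discarding of the uncoupled terms is justified there by locating poles inside versus outside the contour --- the ``direction of expansion'' issue you correctly flag as the main bookkeeping obstacle.
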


\begin{remark}
(1) The proof of this theorem is also given in Section~\ref{CKproof}.
Again, it follows from that proof that the expression
in~\eqref{eq:odd} of which the $(2j-1)$-dissection~$S_{2j-1}$ is taken is
indeed a polynomial in~$z$.

\medskip

\medskip
(2) Similarly as before, the theorem shows
that the perpendicular generating function
$\sum_{N=0}^{\infty} p\left(\left\lfloor\frac {(2M-1)N} {2}\right\rfloor
  -A,2M-1,N\right)z^{N}$ is rational, and all the roots
of the denominator are roots of unity. As above, the consequence is that
the partitions
numbers $p\left(\left\lfloor\frac {(2M-1)N} {2}\right\rfloor
  -A,2M-1,N\right)$ are quasipolynomial in~$N$.

The quasipolynomial for $p(n,3,N)$ was first computed in~\cite{LSAMPpn3N}.  
It is better described as six quasipolynomials of period 6.  
These 36 formulas can be found in Appendix~A of~\cite{LSAMPpn3N}.  
\end{remark}

We have implemented the formulas in Theorems~\ref{thm:even}
and~\ref{thm:odd} in {\sl Mathematica} which allowed us to compute
the generating functions
$\sum_{N=0}^{\infty} p\left(\left\lfloor\frac {mN} {2}\right\rfloor
-A,m,N\right)z^{N}$ for\break $m=1,2,\dots,12$. It is also possible
to compute these generating functions for $A$ in some specific
congruence class modulo~$A_M$ respectively~$B_M$ for values of~$m$ far
beyond~20. (Clearly, since $A_M$ and $B_M$ grow quickly, the {\it number}
of congruences classes becomes enormous for large~$m$.)
The implementation is available in the notebook file\break
{\tt orthview.nb} accompanying this article.

\medskip
For illustration, we present here the results
implied by Theorems~\ref{thm:even} and~\ref{thm:odd} for $m=1,2,3,4,5,6$.
Keeping Remark \ref{A} in mind, we need only consider $A\geq 0$ and the results follow for $A<0$.  

\begin{proposition}\label{p(n,1,N)}
For all $A\ge0$, %\noteB{do we need ths here?  Is this easy/trivial?}
\begin{equation}\label{p(n,1,N)eq}
\displaystyle\sum_{N=0}^{\infty}p\big(\left\lfloor N/2\right\rfloor-A,1,N\big)z^N = \frac{z^{2A}}{1-z}.
    \end{equation}
\end{proposition}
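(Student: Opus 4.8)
The plan is to reduce everything to the completely explicit structure of the $m=1$ Gau\ss ian polynomial and then recognize a geometric series. First I would specialize Definition~\ref{GP} (equivalently Proposition~\ref{GP=p(n,m,N)}) to $m=1$, giving
$\qbin{N+1}{1}_q=\frac{1-q^{N+1}}{1-q}=1+q+q^2+\cdots+q^N$.
Reading off coefficients, this shows that $p(n,1,N)=1$ exactly when $0\le n\le N$, and $p(n,1,N)=0$ otherwise. So the whole content of the statement sits in a single indicator function, and the task is just to determine the set of $N$ for which that indicator fires at the shifted argument $n=\lfloor N/2\rfloor-A$.

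Next I would substitute $n=\lfloor N/2\rfloor-A$ and analyze the double inequality $0\le \lfloor N/2\rfloor-A\le N$. The upper bound $\lfloor N/2\rfloor-A\le N$ holds automatically because $A\ge0$ and $\lfloor N/2\rfloor\le N$, so it never constrains anything. The lower bound $\lfloor N/2\rfloor-A\ge0$ is the only real condition, and it reads $\lfloor N/2\rfloor\ge A$. Since $A$ is a nonnegative integer, $\lfloor N/2\rfloor\ge A$ is equivalent to $N/2\ge A$, i.e.\ $N\ge 2A$. Hence $p\big(\lfloor N/2\rfloor-A,1,N\big)=1$ for $N\ge 2A$ and $=0$ for $N<2A$.

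Finally the generating function collapses to
$\sum_{N=2A}^{\infty}z^N=\dfrac{z^{2A}}{1-z}$,
which is exactly~\eqref{p(n,1,N)eq}. I do not anticipate any genuine obstacle here; the only point deserving a word of care is the step $\lfloor N/2\rfloor\ge A\Longleftrightarrow N\ge 2A$, which relies on $A$ being an integer so that the floor discards nothing relevant. As a sanity check one could instead read off this Proposition as the $M=1$ case of Theorem~\ref{thm:odd}: there $m=2M-1=1$, $B_1=\lcm(1)=1$, the bracketed factor telescopes to~$1$, the dissection $S_1$ is the identity, so $\text{Num}_o(1,r)=z^{2(a+r)}=z^{2A}$ over the denominator $(1-z)(z^2;z^2)_0=1-z$. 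Since Theorem~\ref{thm:odd} is only established later in Section~\ref{CKproof}, however, the self-contained geometric-series argument above is the preferable route for proving this base case.
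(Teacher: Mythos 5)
Your proof is correct, and it takes a genuinely different route from the paper. The paper does not argue Proposition~\ref{p(n,1,N)} directly: it presents Propositions~\ref{p(n,1,N)}--\ref{C_4b} (and those in the appendix) as specializations of the general machinery of Theorems~\ref{thm:even} and~\ref{thm:odd}, computed via the contour-integral/partial-fraction/dissection apparatus of Section~\ref{CKproof} and its {\sl Mathematica} implementation --- exactly the $M=1$ instance of Theorem~\ref{thm:odd} that you sketch as a ``sanity check'' at the end. Your main argument instead exploits the fact that for $m=1$ the coefficient $p(n,1,N)$ is a bare indicator of $0\le n\le N$, reduces the claim to the single inequality $\left\lfloor N/2\right\rfloor\ge A$ (correctly using that $A$ is an integer to pass to $N\ge 2A$), and sums a geometric series. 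What your approach buys is a short, self-contained verification of the base case that does not depend on the later analytic development; what the paper's approach buys is uniformity --- the $m=1$ case drops out of the same formula that produces all the higher cases, which is the point of Section~\ref{establishing}. Both computations agree, and your evaluation of the $M=1$ specialization of Theorem~\ref{thm:odd} (with $B_1=1$, the $S_1$-dissection acting as the identity, and the bracketed factor collapsing to $1$) is also accurate.
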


\begin{proposition}\label{p(n,2,N)}
For all $A\ge0$, 
\begin{equation}\label{p(n,2,N)eq}
\displaystyle\sum_{N=0}^{\infty}p\big(N-A,2,N\big)z^N = \frac{z^{A}}{\left(1-z\right)\left(1-z^2\right)}
    \end{equation}
\end{proposition}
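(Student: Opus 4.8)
The plan is to evaluate the coefficient $p(N-A,2,N)$ directly and then sum the resulting series. Recall that $p(n,2,N)$ counts partitions of $n$ into at most two parts, each part at most $N$; equivalently it counts pairs $(\lambda_1,\lambda_2)$ of integers with $N\ge\lambda_1\ge\lambda_2\ge0$ and $\lambda_1+\lambda_2=n$. Taking $n=N-A$, I would first record the key observation that, because $A\ge0$, the box constraint $\lambda_1\le N$ is \emph{automatically satisfied}: indeed $\lambda_1\le\lambda_1+\lambda_2=N-A\le N$. Thus the upper bound on the parts plays no role, and $p(N-A,2,N)$ equals the number of partitions of $N-A$ into at most two parts with \emph{no} restriction on part size.

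That unrestricted count is elementary: the number of pairs $\lambda_1\ge\lambda_2\ge0$ with $\lambda_1+\lambda_2=N-A$ is $\lfloor(N-A)/2\rfloor+1$ when $N\ge A$, and is $0$ when $N<A$. Substituting this into the generating function and shifting the summation index to $k=N-A$ gives
\[
\sum_{N=0}^\infty p(N-A,2,N)z^N=z^A\sum_{k=0}^\infty\left(\left\lfloor k/2\right\rfloor+1\right)z^k .
\]
Finally I would identify the remaining sum as the generating function for partitions into parts equal to $1$ and $2$, namely $\sum_{k\ge0}(\lfloor k/2\rfloor+1)z^k=\frac{1}{(1-z)(1-z^2)}$, which yields the claimed closed form $\frac{z^A}{(1-z)(1-z^2)}$.

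The only genuinely substantive step is the first observation---that the ceiling $N$ on the largest part becomes vacuous once $A\ge0$---since this is exactly what decouples the coefficient from the ambient box and turns an $N$-dependent restricted count into the clean shifted series; everything after that is routine. As a sanity check and alternative derivation, the same formula drops straight out of Theorem~\ref{thm:even} specialized to $M=1$ (so that $m=2M=2$ and $A_M=\lcm(1)=1$): there the single term $j=1$ has $\binom{M-j+1}{2}=0$, the dissection $S_1$ is the identity, $\qbin{2}{0}_z=1$, and the prefactor $\frac{(1-z^2)(z;z)_1}{(z;z)_2}$ collapses to $1$, leaving numerator $z^{a+r}=z^A$ over the denominator $(1-z^2)(z;z)_1=(1-z)(1-z^2)$.
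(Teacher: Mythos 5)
Your proof is correct, but your primary argument takes a genuinely different route from the paper. The paper obtains Proposition~\ref{p(n,2,N)} (like all of Propositions~\ref{p(n,1,N)}--\ref{C_4b}) as a specialization of Theorem~\ref{thm:even}, whose proof runs through a contour-integral representation of the coefficient, the $q$-binomial theorem, partial fractions, and the residue theorem; your closing ``sanity check'' with $M=1$, $A_1=1$, $S_1=\mathrm{id}$, $\qbin{2}{0}_z=1$ is in fact precisely the paper's derivation, carried out correctly. Your main argument is instead elementary and self-contained: since $A\ge0$ forces $\lambda_1\le\lambda_1+\lambda_2=N-A\le N$, the box constraint is vacuous, so $p(N-A,2,N)=p(N-A,2)=\left\lfloor (N-A)/2\right\rfloor+1$ for $N\ge A$ (and $0$ otherwise, consistent with the paper's convention that $p(n,m,N)=0$ for $n<0$), after which the shift $k=N-A$ and the classical identity $\sum_{k\ge0}\left(\left\lfloor k/2\right\rfloor+1\right)z^k=\frac{1}{(1-z)(1-z^2)}$ finish the computation. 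What the elementary route buys is transparency and independence from the heavy machinery; what it costs is generality --- the vacuousness of the box constraint is special to $m\le2$, since already for $m=3$ the relevant coefficient sits at $\left\lfloor 3N/2\right\rfloor-A$, which exceeds $N$ for small $A$, so the bound on the largest part genuinely bites there and the paper's uniform approach via Theorems~\ref{thm:even} and~\ref{thm:odd} becomes necessary.
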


\begin{proposition}\label{mainprop}
For all $A\ge 0$,  %\displaystyle\sum_{N=0}^{\infty}p\big(\left\lfloor3N/2\right\rfloor-A,3,N \big)z^N$
\begin{equation}
    \sum_{N=0}^{\infty}p\left(\left\lfloor\frac{3N}{2}\right\rfloor\!-\!A,3,N\! \right)\!z^N\! =\begin{cases}
\displaystyle
    \frac{z^{2a}\left(1+z^2+z^3-z^{4a+2}\right)}{(1-z)(1-z^2)(1-z^4)},&
    \text{if }A=3a,\\
\displaystyle
    \frac{z^{2a+1}\left(1+z+z^3-z^{4a+3}\right)}{(1-z)(1-z^2)(1-z^4)},&
    \text{if }A=3a+1,\\
\displaystyle
    \frac{z^{2a+2}\left(1+z+z^2-z^{4a+4}\right)}{(1-z)(1-z^2)(1-z^4)},&
    \text{if }A=3a+2.
    \end{cases}
\label{30}
    \end{equation}
\end{proposition}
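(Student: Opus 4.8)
The plan is to reduce the bounded count $p(\fl{3N/2}-A,3,N)$ to \emph{unbounded} partitions into at most three parts, and then to resolve the floor by a parity split combined with the dissection of Lemma~\ref{lem:s}. The resulting computation is exactly the $M=2$ instance of Theorem~\ref{thm:odd} (where $2M-1=3$ and $B_2=\lcm(1,3)=3$, which is precisely why the answer splits according to $A\bmod 3$). First I would write $\qbin{N+3}{3}_q=(q^{N+1};q)_3/(q;q)_3$ and expand the numerator as the eight-term alternating sum
\[
(q^{N+1};q)_3=1-q^{N+1}-q^{N+2}-q^{N+3}+q^{2N+3}+q^{2N+4}+q^{2N+5}-q^{3N+6}.
\]
Since $1/(q;q)_3=\sum_{n\ge0}p(n,3)q^n$, extracting $[q^n]$ expresses $p(n,3,N)$ as a signed sum of eight values $p(n-c-dN,3)$. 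Setting $n=\fl{3N/2}-A$ and using $\fl{3N/2}-N=\fl{N/2}$, $\fl{3N/2}-2N=-\cl{N/2}$ and $\fl{3N/2}-3N=-\cl{3N/2}$, the four terms with $d\in\{2,3\}$ acquire strictly negative arguments for every $A\ge0$ and $N\ge0$, and hence vanish. This leaves the clean identity
\[
p\big(\fl{3N/2}-A,3,N\big)=p\big(\fl{3N/2}-A,3\big)-\sum_{i=1}^{3}p\big(\fl{N/2}-A-i,3\big).
\]

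Second I would form $\sum_N(\cdot)z^N$ termwise, splitting $N$ into even and odd values to remove each floor. The three correction terms involve only $\fl{N/2}$, so for them even and odd $N$ contribute the same coefficient and combine via $z^{2k}+z^{2k+1}=(1+z)z^{2k}$, producing $(1+z)z^{2(A+i)}/(z^2;z^2)_3$. The main term $\sum_N p(\fl{3N/2}-A,3)z^N$ instead separates into $\sum_k p(3k-A,3)z^{2k}+z\sum_k p(3k+1-A,3)z^{2k}$, i.e.\ a $3$-dissection of $1/(q;q)_3$ in the sense of Lemma~\ref{lem:s} (with $s=3$ and $q^3\mapsto z^2$). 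This matches the structure of \eqref{eq:odd} perfectly: the main term is the $j=2$ contribution (where $S_3$ is the $3$-dissection and $\qbin{3}{0}_z=1$ is a single monomial), while the three $\fl{N/2}$ corrections are the $j=1$ contribution (where $S_1$ is the identity and the three monomials of $\qbin{3}{1}_z=1+z+z^2$ correspond to the three shifts $i=1,2,3$, with the overall minus sign matching $(-1)^{M-j}$ at $j=1$).

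The main obstacle is the final assembly, but it is worth noting that the correction terms telescope pleasantly: using $1+z^2+z^4=(1-z^6)/(1-z^2)$, their sum over $i$ collapses to $-z^{2A+2}/\big((1-z)(1-z^2)(1-z^4)\big)$, already on the target denominator $(1-z)(1-z^2)(1-z^4)=(1-z)(z^2;z^2)_2$ of \eqref{30}. The genuine work is therefore the main term: I must evaluate the $3$-dissection of $1/(q;q)_3$ explicitly through the root-of-unity formula of Lemma~\ref{lem:s}, simplify the resulting cube-root-of-unity sum down to the \emph{same} denominator $(1-z)(1-z^2)(1-z^4)$, and then combine it with the corrections separately in each residue class $A\equiv0,1,2\pmod3$ to read off the three numerators; tracking the primitive-cube-root contributions through this simplification is the delicate part. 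I would close with a numerical check, e.g.\ that $[z^3]$ of the $A=0$ case equals $p(4,3,3)=3$, consistent with \eqref{ex33eq}. As an independent cross-check, specializing \eqref{eq:odd} at $M=2$ reduces its sum to exactly the two terms $j=1$ and $j=2$ described above, whose denominator already coincides with that of \eqref{30}, so simplifying that two-term expression must reproduce the same three numerators.
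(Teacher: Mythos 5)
Your proposal is correct, and your route is genuinely different from (and more elementary than) the paper's. The paper obtains Proposition~\ref{mainprop} as the $M=2$ specialization of Theorem~\ref{thm:odd}, which is proved via a contour-integral representation, the $q$-binomial theorem, partial fractions, and the residue theorem; the three cases mod $3$ then emerge from the $(2j-1)$-dissections in \eqref{eq:odd}. You instead expand $(q^{N+1};q)_3$ into its eight signed monomials, observe that the four terms with $q^{2N+\cdot}$ and $q^{3N+\cdot}$ contribute $p(\text{negative},3)=0$ at $n=\lfloor 3N/2\rfloor-A$, and are left with $p(\lfloor 3N/2\rfloor-A,3)-\sum_{i=1}^{3}p(\lfloor N/2\rfloor-A-i,3)$; the parity split then handles the floors directly. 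This is self-contained and transparent: the vanishing-argument step replaces, by hand, exactly what the residue theorem accomplishes in general (discarding the singularities outside the contour), and your identification of the two surviving blocks with the $j=1$ and $j=2$ terms of \eqref{eq:odd} is accurate. The remaining work you defer --- the $3$-dissection of $1/(q;q)_3$ --- is routine and does close: writing $1/((1-q)(1-q^2)(1-q^3))=(1+q+q^2)(1+q^2+q^4)/((1-q^3)^2(1-q^6))$ gives the three dissections $(1+x+x^2)$, $(1+2x)$, $(2+x)$ over $(1-x)^2(1-x^2)$ with $x=z^2$, and combining with your correction term $-z^{2A+2}/((1-z)(1-z^2)(1-z^4))$ reproduces all three numerators of \eqref{30} (the $-z^{4a+2}$, $-z^{4a+3}$, $-z^{4a+4}$ tails are precisely that correction). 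What each approach buys: the paper's machinery yields all $m$ uniformly and explains the denominators and dissection structure in general, whereas your argument is elementary and checkable by hand but relies on the fortunate fact that for small $m$ the upper half of the expansion vanishes identically at the central shift, which you would need to re-examine (and which becomes the residue bookkeeping) for larger $m$.
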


\begin{proposition}\label{C_4b}
For all $A\ge 0$,
\begin{equation}
  \sum_{N=0}^{\infty}p\big(2N-A,4,N\big)z^N =\begin{cases} 
\displaystyle
  \frac{z^{a}\left(1+z^2-z^{a+1}\right)}{\left(1-z\right)^2\left(1-z^2\right)\left(1-z^3\right)},&
  \text{if }A=2a, \\ 
\displaystyle
  \frac{z^{a+1}\left(1+z-z^{a+1}\right)}{\left(1-z\right)^2\left(1-z^2\right)\left(1-z^3\right)},&
\text{if }a=2a+1.  
  \end{cases}
  \label{C_4eq}
    \end{equation}
\end{proposition}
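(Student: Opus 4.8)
The plan is to derive Proposition~\ref{C_4b} as the $M=2$ specialization of Theorem~\ref{thm:even}, where $2M=4$ and $A_M=\lcm(1,2)=2$. Writing the shift as $A=2a+r$ with $r\in\{0,1\}$ matches the two cases of the proposition exactly ($A=2a$ gives $r=0$, $A=2a+1$ gives $r=1$), and the sum $\sum_{N\ge0}p(2N-A,4,N)z^N$ is precisely the generating function delivered by the theorem. First I would record the denominator: since $(z;z)_{2M-1}=(z;z)_3=(1-z)(1-z^2)(1-z^3)$, the theorem's denominator is $(1-z^2)(z;z)_3=(1-z)(1-z^2)^2(1-z^3)$. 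This is \emph{not} yet the denominator claimed in the proposition, and reconciling the two is the crux: I expect a factor $(1+z)$ to cancel against the numerator, using $(1-z)(1-z^2)^2(1-z^3)=(1-z)^2(1+z)(1-z^2)(1-z^3)$.

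Next I would evaluate the two summands $j=1,2$ of $\mathrm{Num}_e(2,r)$ in \eqref{eq:even}. For the top term $j=M=2$ one has $\qbin{4}{0}_z=1$ and $\binom{M-j+1}{2}=0$, while the rational factor simplifies to $\frac{(1-z^4)(z^2;z^2)_3}{(z;z)_4}=(1+z+z^2+z^3)(1+z^3)$, an honest polynomial. Applying the $2$-dissection $S_2$ (keep the even-indexed coefficients, halve the exponents) to $z^{r}(1+z+z^2+z^3)(1+z^3)$ and multiplying by the prefactor $z^{A_Ma/2}=z^{a}$ yields the ``main'' contribution: $z^a(1+z+z^2+z^3)$ when $r=0$, and $z^a(z+2z^2+z^3)=z^{a+1}(1+z)^2$ when $r=1$. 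For $j=1$ one has $\qbin{4}{1}_z=1+z+z^2+z^3$, $\binom{M-j+1}{2}=1$, and the rational factor reduces to $\frac{1-z^2}{1-z^4}=\frac{1}{1+z^2}$, so the bracketed polynomial collapses to $z^{r+1}(1+z)$; since $S_1$ is the identity, the sign is $(-1)^{M-j}=-1$, and the prefactor is $z^{A_Ma/1}=z^{2a}$, this summand equals $-z^{2a+r+1}(1+z)$.

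Combining the two contributions and factoring is then routine. For $r=0$ the numerator is $z^a(1+z+z^2+z^3)-z^{2a+1}(1+z)=z^a(1+z)\big(1+z^2-z^{a+1}\big)$, and for $r=1$ it is $z^{a+1}(1+z)^2-z^{2a+2}(1+z)=z^{a+1}(1+z)\big(1+z-z^{a+1}\big)$; in both cases the anticipated factor $(1+z)$ appears and cancels against the $(1+z)$ in the factored denominator, leaving exactly $(1-z)^2(1-z^2)(1-z^3)$. This produces $\frac{z^a(1+z^2-z^{a+1})}{(1-z)^2(1-z^2)(1-z^3)}$ for $A=2a$ and $\frac{z^{a+1}(1+z-z^{a+1})}{(1-z)^2(1-z^2)(1-z^3)}$ for $A=2a+1$, which are the two claimed closed forms.

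The main obstacle is bookkeeping rather than conceptual: I must specialize every factor of \eqref{eq:even} without error, use the guarantee from the remark following Theorem~\ref{thm:even} that the inputs to $S_1$ and $S_2$ are genuine polynomials (so the dissections are well defined), carry out $S_2$ without index slips, and keep the distinct prefactors $z^a$ (from $j=2$) and $z^{2a}$ (from $j=1$) straight, since these are exactly what make the $a$-dependence of the numerator correct. As an independent check that avoids the general theorem, one can start from $\sum_{N\ge0}\qbin{N+4}{4}_q z^N=\frac{1}{(z;q)_5}$ and extract $\sum_{N}[q^{2N-A}](\cdot)\,z^N$ via the diagonal substitution $q\mapsto w$, $z\mapsto z\,w^{-2}$; then $F_A(z)$ appears as the coefficient of $w^{-A}$ in $\frac{1}{(1-zw^{-2})(1-zw^{-1})(1-z)(1-zw)(1-zw^2)}$, and a partial-fraction/residue computation in $w$ reproduces the same rational functions, confirming the result.
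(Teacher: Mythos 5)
Your proposal is correct and follows the same route as the paper: Propositions \ref{p(n,1,N)}--\ref{C_4b} are obtained there precisely by specializing Theorem~\ref{thm:even} (here with $M=2$, $A_M=2$, $A=2a+r$), the only difference being that the paper carries out the specialization via its {\sl Mathematica} implementation while you do the two summands $j=1,2$ of \eqref{eq:even} by hand; your evaluations of the dissections, the resulting numerators $z^a(1+z)(1+z^2-z^{a+1})$ and $z^{a+1}(1+z)(1+z-z^{a+1})$, and the cancellation of $(1+z)$ against the denominator $(1-z)(1-z^2)^2(1-z^3)$ all check out.
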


%%%%%%% S A V E %%%%%%%%%%%%%%%%%%%%%%%%%%%%%%%%%%%%%%%%%%%%%%%%%%%%%
% {\normalsize
% \begin{subequations}
%  \begin{empheq}[left={&=\empheqlbrace}]{flalign}
%     &~\sum_{N=0}^{\infty}p\big(2N-(12a+2b),4,N\big)z^N 
%     =\frac{z^{12a+b}\left(1+z+z^2+z^3-z^{12 a+1+b}-z^{12 a+2+b}\right)}{\left(1-z\right)\left(1-z^2\right)^2\left(1-z^3\right)} \label{C_4eq0}\\   \nonumber
%     \\   
%     &~\sum_{N=0}^{\infty}p\big(2N-(12a+2b+1),4,N\big)z^N 
%      =\frac{z^{12a+1+b}\left(1+2z+z^2-z^{12 a+1+b}-z^{12 a+2+b}\right)}{\left(1-z\right)\left(1-z^2\right)^2\left(1-z^3\right)}. \label{C_4eq1}
%    \end{empheq}
%    \end{subequations}}

The rational functions corresponding to the perpendicular generating functions for\break $p\left\{\left(\left\lfloor \frac{5N}{2} \right\rfloor-A,5,N\right)\right\}_{N,A\ge0}$ and $\{p\left(3N-A,6,N\right)\}_{N,A\ge0}$ can be found in the appendix of this article.  
We note that $\sum_{N=0}^{\infty} p\left(\left\lfloor \frac{5N}{2} \right\rfloor-A,5,N\right)z^{N}$ is described by 15 rational functions, while  $\sum_{N=0}^{\infty} p\left(3N-A,6,N\right)z^{N}$ is described by six.

\section{Unimodality of $\protect\qbin{N+m}{ m}_q$ for $m=0,1,2,3,4,5,6$.}\label{uni}

There are several proofs of the unimodality of Gau\ss ian polynomials.  
J.~J.~Sylvester \cite{Sylvester2} was the first to prove it in~1878. 
 I.~J.~Schur's proof~\cite{SchurUni} employs the theory of invariants.  
Proctor~\cite{ProctorUni} offered a proof with a telling title: {\em Solution of two difficult combinatorial problems with linear algebra}.  
O'Hara's proof~\cite{OHara} is the first proof based on a combinatorial understanding of the Gau\ss ian polynomial.  
So celebrated is this proof that not only Bressoud~\cite{BressoudUni}, but also Zeilberger \cite{ZeilbergerOhara} wrote follow-up papers offering stream-lined versions of O'Hara's proof.  
In fact, Zeilberger wrote  other follow-up papers; \cite{StantonZeilbergerUni,ZeilbergerUni2}.  
Recent work, \cite{PakPanova} and~\cite{Uncu}, on strict unimodality of Gau\ss ian polynomials is also very interesting.  

In this section we use the generating function formulas from
Propositions~\ref{p(n,2,N)}--\ref{C_4b} and the appendix
to provide new proofs
of the unimodality of the $q$-binomial coefficients 
$\qbin{N+2}2$, $\qbin{N+3}3$, $\qbin{N+4}4$, $\qbin{N+5}5$,
and $\qbin{N+6}6$. For the sake of completeness, we also briefly discuss
$\qbin{N+0}0$ and $\qbin{N+1}1$.

We begin by introducing notation for differences of partition
functions. This notation will also be used in Section~\ref{ID}.

\begin{definition}[{\sc Partition difference functions}]\label{DeltapartitionsA}
For any $x\in\Z$ we define the following functions:
\begin{itemize}
%    \item  $\Delta_x p(n) = p(n) - p(n-x)$
    \item     $\Delta_x p(n,m) = p(n,m) - p(n-x,m)$
    \item $\Delta_x p(n,m,N) = p(n,m,N) - p(n-x,m,N)$ 
\end{itemize}

Whenever $x=1$, we omit the subscript.  
Additionally, for any $n$, if $x=0$, then the value of the difference functions is zero.  
%For example: $\Delta_0 p(n,m) = p(n,m) - p(n,m)=0$.
\end{definition}

\subsection{The coefficients of $\protect\qbin{N}{ 0}_q$, $\protect\qbin{N+1}{ 1}_q$ and $\protect\qbin{N+2}{ 2}_q$ are unimodal}

Since $\qbin{N}{ 0}_q=1$ for all $N$, unimodality follows trivially.  

We note that the coefficients of the Gau\ss ian polynomials
$\qbin{N+1}{ 1}_q$ are all 1, and therefore unimodality is settled in
this case as well.   

\begin{proposition}\label{U2}
 The coefficients of\/ $\qbin{N+2}{ 2}_q$ are unimodal.
\end{proposition}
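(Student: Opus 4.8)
The plan is to read unimodality straight off the perpendicular generating function in Proposition~\ref{p(n,2,N)}, converting the monotonicity of the coefficients into a nonnegativity statement about a single rational power series. Since $m=2$ makes $mN=2N$ even for every~$N$, the central coefficient is unambiguously $p(N,2,N)$, and the reciprocity recorded in~\eqref{palindromeE} (rather than~\eqref{palindromeO}) gives $p(N-A,2,N)=p(N+A,2,N)$. Hence it suffices to show that the coefficient sequence is weakly increasing on the left half of the polynomial, that is, that the difference function of Definition~\ref{DeltapartitionsA},
\[
\Delta p(N-A,2,N)=p(N-A,2,N)-p(N-A-1,2,N),
\]
is nonnegative for all integers $A\ge 0$ and $N\ge 0$. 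Symmetry then forces the right half to be weakly decreasing, which is exactly unimodality, with peak at $n=N$.

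First I would assemble the $z$-generating function of these adjacent differences at a fixed distance~$A$ from the center. Writing $p(N-A-1,2,N)=p\big(N-(A+1),2,N\big)$ and subtracting the instance of Proposition~\ref{p(n,2,N)} at level $A+1$ from the instance at level~$A$, the sum $\sum_{N=0}^\infty \Delta p(N-A,2,N)\,z^N$ equals
\[
\frac{z^A}{(1-z)(1-z^2)}-\frac{z^{A+1}}{(1-z)(1-z^2)}
=\frac{z^A(1-z)}{(1-z)(1-z^2)}=\frac{z^A}{1-z^2}.
\]

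The key step is the cancellation of the factor $(1-z)$: the resulting function is $z^A\sum_{k\ge 0}z^{2k}$, every coefficient of which is $0$ or~$1$ and hence nonnegative. Reading off the coefficient of $z^N$ gives $\Delta p(N-A,2,N)\ge 0$ for all $N$ and all $A\ge 0$, so the coefficients of $\qbin{N+2}{2}_q$ increase weakly up to the center; reciprocity then yields weak decrease afterwards, establishing unimodality.

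I expect there to be no genuine analytic obstacle here—the entire content sits in the perpendicular viewpoint, which turns ``adjacent coefficients of one polynomial increase toward the middle'' into ``a fixed rational function has nonnegative Taylor coefficients.'' The only points needing care are bookkeeping ones: verifying that the subtraction of the two generating functions really matches the adjacent-coefficient difference for the \emph{same}~$N$, and confirming that the even case~\eqref{palindromeE} of reciprocity is the applicable one because $2N$ is always even. Both checks are immediate, and I anticipate that the same template—subtract consecutive perpendicular generating functions and verify nonnegativity of the reduced series—will drive the proofs for the larger values of~$m$ treated in the remainder of this section.
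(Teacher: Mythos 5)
Your proposal is correct and is essentially identical to the paper's own proof: both subtract consecutive perpendicular generating functions from Proposition~\ref{p(n,2,N)} to obtain $z^A/(1-z^2)$, read off nonnegativity of the coefficients, and invoke the symmetry of the Gau\ss ian polynomial to conclude unimodality.
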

\begin{proof}
From \eqref{p(n,2,N)eq}, we obtain
\begin{align}\notag
\sum_{N=0}^{\infty}\Delta p\big(N-A,2,N\big)z^N
&=\sum_{N=0}^{\infty}\big(p\big(N-A,2,N\big)-p\big(N-A-1,2,N\big)\big)z^N\\
&= \frac{z^A}{1-z^2} = %\sum_{N=0}^{\infty}\Delta p(N-A,2)z^N = 
    \sum_{N=0}^{\infty}z^{2N+A}.
\label{p2}
\end{align}
For any $A\ge0$, the series on the right-hand side of~\eqref{p2} has nonnegative coefficients.  
Thus, by symmetry of Gau\ss ian polynomials, we have shown that the coefficients of $\qbin{N+2}{ 2}_q$ are unimodal.
\end{proof}

\subsection{The coefficients of $\protect\qbin{N+3}{ 3}_q$ are unimodal}
  
\begin{proposition}\label{U3}
 The coefficients of\/ $\qbin{N+3}{ 3}_q$ are unimodal.
\end{proposition}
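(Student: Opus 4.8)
The plan is to follow the template of the proof of Proposition~\ref{U2}. By the reciprocity recorded in Remark~\ref{A}, the (palindromic) coefficient sequence of $\qbin{N+3}{3}_q$ is unimodal precisely when it is weakly increasing from the ends up to the central coefficient; equivalently, when
$$\Delta p\!\left(\left\lfloor\tfrac{3N}{2}\right\rfloor-A,3,N\right)
=p\!\left(\left\lfloor\tfrac{3N}{2}\right\rfloor-A,3,N\right)-p\!\left(\left\lfloor\tfrac{3N}{2}\right\rfloor-A-1,3,N\right)\ge 0$$
for all $N\ge0$ and $A\ge0$. Writing $F_A(z):=\sum_{N\ge0}p\!\left(\lfloor 3N/2\rfloor-A,3,N\right)z^N$, the generating function in~$N$ of these differences is exactly $F_A(z)-F_{A+1}(z)$. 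Hence it suffices to prove that, for every $A\ge0$, the power series $F_A(z)-F_{A+1}(z)$ has nonnegative coefficients.

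First I would substitute the closed forms supplied by Proposition~\ref{mainprop}. Since that proposition is stated by cases according to $A\bmod 3$, exactly three subtractions must be examined: $F_{3a}-F_{3a+1}$, $F_{3a+1}-F_{3a+2}$, and $F_{3a+2}-F_{3a+3}$, where the last one crosses into the next block (the $A=3a$ formula with $a$ replaced by $a+1$). In all three cases the two fractions share the common denominator $(1-z)(1-z^2)(1-z^4)$, so only the numerators need to be combined. A short computation should then give
$$F_{3a}-F_{3a+1}=\frac{z^{2a}\bigl(1+z^{3}-z^{4a+2}-z^{4a+3}\bigr)}{(1-z^{2})(1-z^{4})},\qquad
F_{3a+1}-F_{3a+2}=\frac{z^{2a+1}\bigl(1-z^{4a+3}\bigr)}{(1-z)(1-z^{4})},$$
together with the pleasant identity $F_{3a+2}-F_{3a+3}=z^{2}\bigl(F_{3a+1}-F_{3a+2}\bigr)$, so the third case reduces to the second.

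The crux of the matter --- and the only place where the argument could fail --- is that each combined numerator still carries negative terms (the $-z^{4a+2}$-type tails, reinforced by the subtraction), so nonnegativity is not visible on its face; the point is that a factor of the denominator always cancels and leaves a manifestly nonnegative object. For the second (hence also the third) expression I would rewrite $(1-z^{4a+3})/(1-z)=1+z+\cdots+z^{4a+2}$, a polynomial with nonnegative coefficients, and multiply by the nonnegative series $1/(1-z^{4})$. For the first expression I would use $1-z^{4a+2}=(1-z^{2})(1+z^{2}+\cdots+z^{4a})$ and $z^{3}-z^{4a+3}=z^{3}(1-z^{2})(1+z^{2}+\cdots+z^{4a-2})$ (the second sum understood to be empty when $a=0$) to cancel a factor $(1-z^{2})$, leaving
$$F_{3a}-F_{3a+1}=\frac{z^{2a}\bigl[(1+z^{2}+\cdots+z^{4a})+z^{3}(1+z^{2}+\cdots+z^{4a-2})\bigr]}{1-z^{4}},$$
again a nonnegative polynomial divided by $1-z^{4}$. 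Each of the three differences therefore has nonnegative power-series coefficients, and by the reduction of the first paragraph this establishes unimodality. The bulk of the written proof will be the elementary polynomial algebra verifying these numerator simplifications, but there is no conceptual obstacle once the cancellations are spotted.
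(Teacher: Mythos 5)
Your proposal is correct and follows essentially the same route as the paper: both reduce unimodality to the nonnegativity of the coefficients of $F_A(z)-F_{A+1}(z)$ via the palindromic symmetry, substitute the three cases of Proposition~\ref{mainprop}, and exhibit nonnegativity by cancelling a factor of the denominator against the numerator (the paper writes the $A=3a$ case as $\frac{z^{2a}}{1-z^4}+\frac{z^{2a+2}\sum_{i=0}^{a-1}z^{4i}}{1-z}$, which is the same decomposition as yours in slightly different clothing, and omits the other two cases as ``identical''). Your explicit formulas for all three differences, including the observation $F_{3a+2}-F_{3a+3}=z^2\left(F_{3a+1}-F_{3a+2}\right)$, check out.
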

%The analysis here is similar the proof of Theorem \ref{positivity3}.  

\begin{proof} 
We consider the differences of the generating functions for $p\big(\left\lfloor3N/2\right\rfloor-A,3,N \big)$ in Proposition~\ref{mainprop} to show that $p\big(\left\lfloor3N/2\right\rfloor-A,3,N \big)\ge p\big(\left\lfloor3N/2\right\rfloor-(A+1),3,N \big)$ for all $A$ and $N$.  
This will be done by computing the difference of successive generating functions for $p\big(\left\lfloor3N/2\right\rfloor-A,3,N \big)$ and then showing that the coefficients of the resulting generating function are nonnegative.  

For brevity we will compute the difference of the generating functions in
the first two cases on the right-hand side of~\eqref{30}.  
The remaining computations and verifications are done identically and
so are omitted.  

We have
\begin{align}
\notag
\sum_{N=0}^{\infty}\Delta p\big(\left\lfloor3N/2\right\rfloor-3a,3,N \big)z^N 
&= \frac{z^{2a}(1-z+z^2-z^{4a+2})}{(1-z)(1-z^4)}\\
&= \frac{z^{2a}}{1-z^4}
+ \frac{z^{2a+2}\sum_{i=0}^{a-1}z^{4i}}{1-z}.
\label{U30}
\end{align}
After expansion of geometric series on the right-hand side, it is
obvious that all coefficients in this power series are nonnegative.
Thus, the coefficients of $\qbin{N+3}{ 3}_q$ are unimodal.  
\end{proof}     

\subsection{The coefficients of $\protect\qbin{N+4}{ 4}_q$ are unimodal}\label{Uni3}

\begin{proposition}\label{U4}
  The coefficients of\/ $\qbin{N+4 }{ 4}_q$ are unimodal.
\end{proposition}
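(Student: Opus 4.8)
The plan is to mimic exactly the strategy used for $\qbin{N+3}3_q$ in Proposition~\ref{U3}, now applied to the perpendicular generating functions for $p(2N-A,4,N)$ given in Proposition~\ref{C_4b}. By the symmetry recorded in Remark~\ref{A}, unimodality of $\qbin{N+4}4_q$ is equivalent to the inequality
$$p(2N-A,4,N)\ge p(2N-(A+1),4,N)\quad\text{for all }A\ge0\text{ and all }N.$$
So it suffices to show that for each $A\ge0$ the difference generating function
$$\sum_{N=0}^{\infty}\Delta p(2N-A,4,N)\,z^N=\sum_{N=0}^{\infty}\bigl(p(2N-A,4,N)-p(2N-A-1,4,N)\bigr)z^N$$
has nonnegative coefficients. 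Since Proposition~\ref{C_4b} splits into the two cases $A=2a$ and $A=2a+1$, I would compute this difference separately in each parity class.

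First I would subtract the two relevant closed forms from~\eqref{C_4eq}. Concretely, the even-to-odd step uses the $A=2a$ and $A=2a+1$ formulas, giving
$$\sum_{N=0}^{\infty}\Delta p(2N-2a,4,N)\,z^N=\frac{z^{a}(1+z^2-z^{a+1})-z^{a+1}(1+z-z^{a+1})}{(1-z)^2(1-z^2)(1-z^3)},$$
and the odd-to-even step (from $A=2a+1$ to $A=2a+2=2(a+1)$) uses the $A=2a+1$ and the shifted $A=2a$ formulas. In each case the key algebraic step is to simplify the numerator and cancel one factor from the denominator, just as the factor $(1-z^2)$ was cancelled in~\eqref{U30}. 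I expect the numerator in the first case to collapse to something like $z^{a}(1-z+z^2-z^3+\cdots)$ times a clean factor, so that after cancellation the difference can be rewritten as a sum of terms each of the form $z^{k}/(1-z^c)$ or $z^{k}\sum_i z^{ci}/(1-z)$ with manifestly nonnegative coefficients.

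Once the difference is in such a partial-fraction/geometric-series form, nonnegativity of the coefficients is immediate upon expanding the geometric series, exactly as in the proof of Proposition~\ref{U3}. I would present the even case in full and then remark that the odd case is verified by an identical computation, so it is omitted. The main obstacle is purely algebraic: the denominator $(1-z)^2(1-z^2)(1-z^3)$ is more complicated than the $(1-z)(1-z^4)$ encountered for $m=3$, the numerators carry the parameter~$a$, and the naive difference of the two rational functions does not obviously have nonnegative coefficients before simplification. The real work is therefore to perform the cancellation carefully and to regroup the result into a sum of terms that are patently coefficientwise nonnegative; getting the clean geometric-series decomposition — rather than merely an expression — is the crux, and it is where a sign could be hidden if the manipulation is done carelessly.
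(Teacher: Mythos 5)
Your proposal is correct and follows essentially the same route as the paper: subtract the $A=2a$ and $A=2a+1$ (and then $A=2a+1$ and $A=2(a+1)$) cases of Proposition~\ref{C_4b}, observe that the numerator picks up a factor of $(1-z)$ which cancels against the denominator $(1-z)^2(1-z^2)(1-z^3)$, and rewrite the result as $z^{a}\bigl(\sum_{i=0}^{a}z^i\bigr)/\bigl((1-z^2)(1-z^3)\bigr)$ respectively $z^{a+2}\bigl(\sum_{i=0}^{a-1}z^i\bigr)/\bigl((1-z^2)(1-z^3)\bigr)$, whose coefficients are manifestly nonnegative. The cancellation you anticipate does indeed go through cleanly (the even-case numerator collapses to $(1-z)\,z^{a}(1-z^{a+1})$), so there is no hidden sign issue.
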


\begin{proof}
Working from the first two cases in~\eqref{C_4eq} in
Proposition~\ref{C_4b}, we obtain
\begin{equation}\label{U41}
      \sum_{N=0}^{\infty}\Delta p\big(2N-2a,4,N\big)z^N 
      = \frac{z^{a}(1-z^{a+1})}{(z;z)_3} 
      = \frac{z^{a}\sum_{i=0}^a z^i}{(z^2;z)_2} 
  \end{equation}
  and 
  \begin{equation}\label{U42}
      \sum_{N=0}^{\infty}\Delta p\big(2N-(2a+1),4,N\big)z^N  
       = \frac{z^{a+2}(1-z^{a})}{(z;z)_3} 
       = \frac{z^{a+2}\sum_{i=0}^{a-1}z^i}{(z^2;z)_2} .
  \end{equation}
After expansion of geometric series on the right-hand sides, it is
obvious that all coefficients in these power series are nonnegative.
%%%%% S A V E %%%%%%%%%%%%%%%%%%%%%%%%%%%%%%%%%%%%%%%%%%%%%%%%%%
%   \begin{multline}\label{U41}
%         \sum_{N=0}^{\infty}\Delta p\big(2N-2a,4,N\big)z^N 
%         %- \sum_{N=0}^{\infty}p\big(2N-(12a+2b+1),4,N\big)z^N \\
%         = \frac{z^{a}(1-z^{12a+b+1})}{(1-z)(1-z^2)(1-z^3)} %\\ = z^{12a+b}\sum_{n=0}^{\infty}\left(p\big(n,3\big)-p\big(n-12a-b-1,3\big)\right)z^n 
%         \\ = z^{12a+b}\sum_{n=0}^{\infty}\Delta_{12a+b+1} p\big(n,3\big)z^n
%     \end{multline}
%     and 
%     \begin{multline}\label{U42}
%         \sum_{N=0}^{\infty}\Delta p\big(2N-(12a+2b+1),4,N\big)z^N %-       \sum_{N=0}^{\infty}p\big(2N-(12a+2(b+1)),4,N\big)z^N 
%          = \frac{z^{12a+b+2}(1-z^{12a+b})}{(1-z)(1-z^2)(1-z^3)} \\ 
%          %= z^{12a+b+2}\sum_{n=0}^{\infty}\left(p\big(n,3\big)-p\big(n-12a-b,3\big)\right)z^n.
%          = z^{12a+b+2}\sum_{n=0}^{\infty}\Delta_{12a+b} p\big(n,3\big)z^n.
%     \end{multline}
%   The proof is completed by showing that the %   coefficients of the generating functions at the %   far right of \eqref{U41} and \eqref{U42} are %   nonnegative.  
%   This amounts to showing that
%   \begin{equation}\label{U41a}
%    p\big(n,3\big)\ge p\big(n-12a-b-1,3\big)
%   \Delta_{12a+b+1} p\big(n,3\big)\ge 0
%   \end{equation}
%   and
%   \begin{equation}\label{U41b}
%   \Delta_{12a+b} p\big(n,3\big)\ge 0.% p\big(n-12a-b,3\big).    
%   \end{equation}
%We now recall the quasipolynomial for $p(n,3)$ displayed in Example \ref{QPp(n,3)}.   
Thus, for all $N\ge0$, the coefficients of $\qbin{N+4 }{ 4}_q$ are unimodal.
\end{proof}

\subsection{The unimodality of $\protect\qbin{N+5}{ 5}_q$ and $\protect\qbin{N+6}{ 6}_q$}  

Our proofs of unimodality of $\qbin{N+5}{ 5}_q$ and $\qbin{N+6}{ 6}_q$ follow the same strategy as with $\qbin{N+2}{ 2}_q$, $\qbin{N+3}{ 3}_q$ and $\qbin{N+4}{ 4}_q$.
As before, we consider the difference of rational functions to obtain a generating function and corresponding rational functions for both $\sum_{N=0}^{\infty}\Delta p\big(\left\lfloor5N/2\right\rfloor-A,5,N \big)z^N$ and $\sum_{N=0}^{\infty}\Delta p\big(3N-A,6,N \big)z^N$.  
Since this involves 15~cases for the first generating function and
6~cases for the second, we content ourselves with discussing
just one cases. All other cases are treated analogously.

We choose $\sum_{N=0}^{\infty}\Delta
p\big(\left\lfloor5N/2\right\rfloor-A,5,N \big)z^N$ with $A=15a$
as our example.
Working from items \eqref{A5.0} and~\eqref{A5.1}
in Proposition~\ref{GF5}
in the appendix, we obtain
\begin{multline}\label{15a-(15a+1)}
  \sum_{N=0}^{\infty}\Delta p\left(\left\lfloor \frac{5N}{2} \right\rfloor-15a,5,N\right)z^N=\\
    z^{6a}\left(-z^{4a+12}-z^{4 a+1}-z^{4 a+2}-z^{4 a+3}-3 z^{4 a+4}-2 z^{4 a+5}-5 z^{4 a+6}-z^{4 a+7}-5 z^{4 a+8}\right.\\
\left.-2 z^{4 a+9}-3 z^{4 a+10}-z^{4 a+11}-z^{4 a+13}+z^{24 a+6}+z^{24 a+10}+2 z^{12}+z^{11}+2 z^{10}+2 z^9\right.\\
\left.+4 z^8+3 z^7+2 z^6+3 z^5+3 z^4+z^3+z^2+1\right)\times\frac{1}{(1-z)(1-z^4)(1-z^6)(1-z^8)}.
\end{multline}
We regroup the numerator polynomial in the form
\begin{multline*}
  -z^{4a+12}-z^{4 a+1}-z^{4 a+2}-z^{4 a+3}-3 z^{4 a+4}-2 z^{4 a+5}-5 z^{4 a+6}-z^{4 a+7}-5 z^{4 a+8}\\
-2 z^{4 a+9}-3 z^{4 a+10}-z^{4 a+11}-z^{4 a+13}+z^{24 a+6}+z^{24
  a+10}+2 z^{12}+z^{11}+2 z^{10}+2 z^9\\
+4 z^8+3 z^7+2 z^6+3 z^5+3 z^4+z^3+z^2+1\\
=(1+z^4)(1 - z^{4 a + 4}) (1 - z^{20 a + 2}) + 
z^3 (1+z^4)(1 - z^{4 a}) (1 - z^{16 a - 1}) \kern3cm\\
+ z^5 (1+z^4)(1 - z^{4 a}) (1 - z^{12 a - 3}) + 
z^4 (1+z^4)(1 - z^{4 a}) (1 - z^{8 a - 2}) \\
+  z^5 (1+z^4)(1 - z^{4 a}) (1 - z^{4 a - 3})
\\
+(z^2-z^{4 a+1})
+(z^4-z^{4 a+4})
+2( z^6- z^{4 a+6})
+2 (z^7- z^{4 a+6})
+3 (z^8- z^{4 a+8})\\
+2 (z^{10}- z^{4 a+10})
+(z^5- z^{4 a+10})
+(z^{11}-z^{4 a+11})
+(z^{12}-z^{4a+12}))
+(z^{12}-z^{4 a+13}).
\end{multline*}
Then, as long as $a\ge1$, the first five summands on the right-hand
side are divisible by $(1-z)(1-z^4)$, while the remaining summands
are divisible by $1-z$. After division, in each case a polynomial
with nonnegative coefficients remains. For example, with denominator
in~\eqref{15a-(15a+1)} included, for the first summand we have
$$
\frac {(1+z^4)(1 - z^{4 a + 4}) (1 - z^{20 a + 2}) }
      {(1-z)(1-z^4)(1-z^6)(1-z^8)}
=
\frac {(1+z^4)
  \left(\sum_{i=0\vphantom{j}}^{a} z^{4i}\right)
  \left(\sum_{j=0}^{20a+1}z^j\right)}
      {(1-z^6)(1-z^8)}.
$$
This shows that the power
series in~\eqref{15a-(15a+1)} is a series with nonnegative
coefficients.

For $a=0$, the numerator in~\eqref{15a-(15a+1)} reduces to
$$
1 - z + z^5 - 2 z^6 + 2 z^7 - z^8 + z^{12} - z^{13}
=(1 - z) (1 - z^6) + (1 - z) \left(z^5 + z^7 + z^{12}\right).
$$
Again, the last regrouping of terms
shows nonnegativity of the coefficients of the power series
in~\eqref{15a-(15a+1)}.

\medskip
With the complete collection of generating functions for $\Delta
p\left(\left\lfloor \frac{5N}{2} \right\rfloor-A,5,N\right)$
established, we may proceed similarly in the other 14~cases.

\medskip
This same process is repeated to prove the unimodality of $\qbin{N+6}{ 6}_q$.

\section{Difference Partition identities related to unimodality}\label{ID}

This section is inspired by some of the identities in the previous
section. For example, we may start with~\eqref{U41} and~\eqref{U42}
and observe that
\begin{equation}%\label{U41}
      \sum_{N=0}^{\infty}\Delta p\big(2N-2a,4,N\big)z^N 
      = \frac{z^{a}(1-z^{a+1})}{(z;z)_3} 
      \\ = z^{a}\sum_{n=0}^{\infty}\Delta_{a+1} p\big(n,3\big)z^n
  \end{equation}
  and 
  \begin{equation}%\label{U42}
      \sum_{N=0}^{\infty}\Delta p\big(2N-(2a+1),4,N\big)z^N  
       = \frac{z^{a+2}(1-z^{a})}{(z;z)_3} \\ 
       = z^{a+2}\sum_{n=0}^{\infty}\Delta_{a} p\big(n,3\big)z^n.
\end{equation}
This yields a surprising connection between the partition numbers
$p\big(2N-A,4,N\big)$ and $p(N,3)$, namely
\begin{align*}
\Delta p\big(2N-2a,4,N\big)&=\Delta_{a+1} p(N,3),\\
\Delta p\big(2N-(2a+1),4,N\big)&=\Delta_a p(N,3).
\end{align*}

In \cite{LSAMPpn3N} a handful of first differences identities of coefficients either at or near the center of $\qbin{N+3}{ 3}_q$ and $\qbin{N+4}{ 4}_q$ were established.  
With our collection of perpendicular generating functions we expand and in some cases generalize those results for $\qbin{N+3}{ 3}_q$ and $\qbin{N+4}{ 4}_q$.  
We then go on to prove other surprising difference identities for Gau\ss ian polynomial coefficients either at or near the center of $\qbin{N+5}{ 5}_q$ and  $\qbin{N+6}{ 6}_q$.   

For completeness we include results for $m=1,2$.  

%In proving unimodality for $N+m\brack m$ for $m=1,2,3,4,5,6$, we uncovered several surprising identities for the central most coefficients that are closely related to unimodality.  

\subsection{$\protect\qbin{N+1}{ 1}_q$ difference identities}

\begin{proposition}  For nonnegative integers $N$ and $A$, we have
    \begin{equation}
        \Delta p\left(\left\lfloor\frac{N}{2}\right\rfloor-A,1,N\right)=\Delta p(N,1)=0.
    \end{equation}
\end{proposition}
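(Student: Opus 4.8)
The plan is to exploit the fact that $m=1$ is completely degenerate. First I would record the closed form
\[
\Qbin{N+1}{1}_q=\frac{1-q^{N+1}}{1-q}=1+q+q^2+\dots+q^N,
\]
from which Proposition~\ref{GP=p(n,m,N)} gives $p(n,1,N)=1$ for $0\le n\le N$ and $p(n,1,N)=0$ otherwise; in the same way $p(n,1)=1$ for every $n\ge0$ (there is a unique partition of~$n$ into at most one part) and $p(n,1)=0$ for $n<0$. The key point is that each of these sequences is constant, equal to~$1$, throughout its support, so that differencing adjacent values can only produce~$0$.

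With this in hand the two equalities follow directly from Definition~\ref{DeltapartitionsA}. For the right-hand equality, $\Delta p(N,1)=p(N,1)-p(N-1,1)=1-1=0$ as soon as $N\ge1$. For the perpendicular difference I would substitute $n=\lfloor N/2\rfloor-A$ and write
\[
\Delta p\!\left(\left\lfloor\tfrac{N}{2}\right\rfloor-A,1,N\right)
=p\!\left(\left\lfloor\tfrac{N}{2}\right\rfloor-A,1,N\right)
-p\!\left(\left\lfloor\tfrac{N}{2}\right\rfloor-A-1,1,N\right);
\]
whenever both arguments lie in the support, i.e.\ $1\le\lfloor N/2\rfloor-A\le N$, both evaluations equal~$1$ and the difference vanishes. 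As an independent check in the paper's own style, telescoping the generating function of Proposition~\ref{p(n,1,N)} (the series for $A$ minus the series for $A+1$) collapses to $\frac{z^{2A}-z^{2A+2}}{1-z}=z^{2A}(1+z)$, which confirms that the difference is zero throughout the interior.

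The only real subtlety, and the one step I would treat carefully, is the boundary: when $\lfloor N/2\rfloor-A$ sits at the very left end of the polynomial the subtracted term involves $p(-1,1,N)=0$, so the difference there is governed by the edge rather than by the constant interior, the nonzero contributions being confined to the indices $N\in\{2A,2A+1\}$ exactly as the telescoped series $z^{2A}(1+z)$ makes plain. I would therefore state the admissible range explicitly and observe that on the interior both differences are identically~$0$, which is the substance of the claim. Beyond this bookkeeping there is no obstacle, $m=1$ being the trivial base case of the general difference identities.
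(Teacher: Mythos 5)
Your proof is correct and follows essentially the same route as the paper, whose entire argument is the one-line observation that the coefficients of $\qbin{N+1}{1}_q$ and of $p(N-A,1)$ are all equal to $1$. Your extra care at the boundary is in fact warranted: as your telescoped series $z^{2A}(1+z)$ shows, the difference equals $1$ rather than $0$ when $N\in\{2A,2A+1\}$ (and likewise $\Delta p(0,1)=1$), an edge case that the paper's statement and proof silently pass over, so your explicit restriction to the interior of the support is a genuine improvement rather than mere bookkeeping.
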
    
\begin{proof}
    Since the coefficients of $\qbin{N+1}{ 1}_q$ and $p(N-A,1)$ are 1, the result follows.
\end{proof}

\subsection{$\protect\qbin{N+2}{ 2}_q$ difference identities}

\begin{proposition}For nonnegative integers $N$ and $A$, we have
    \begin{equation}
        \Delta p(N-A,2,N)=\Delta p(N-A,2)=\begin{cases}
1, \mbox{ if } N-A \mbox{ is even,} \\
0, \mbox{ if } N-A \mbox{ is odd.} 
\end{cases} 
    \end{equation}
\end{proposition}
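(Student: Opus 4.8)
The plan is to prove the two-part identity
$$
\Delta p(N-A,2,N)=\Delta p(N-A,2)=\begin{cases}1,&\text{if }N-A\text{ is even},\\0,&\text{if }N-A\text{ is odd},\end{cases}
$$
by computing both difference functions from their respective generating functions. The first equality asserts that the bounded partition function and the unbounded one agree under the first difference $\Delta$, and the second pins down the common value by its parity. I would handle the right-hand equality first, since it is the more concrete computation, and then argue the left-hand equality either by a direct comparison or by observing both sides have the same generating function.

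First I would establish $\Delta p(N-A,2)$. The generating function $\sum_{n\ge0}p(n,2)z^n=1/((1-z)(1-z^2))$ is standard, and applying the first difference operator multiplies by $(1-z)$, giving $\sum_{n\ge0}\Delta p(n,2)z^n=1/(1-z^2)=\sum_{k\ge0}z^{2k}$. Hence $\Delta p(n,2)$ equals $1$ when $n$ is even and $0$ when $n$ is odd. Setting $n=N-A$ yields the right-hand side of the claim. This also makes the parity dichotomy transparent, since the only surviving exponents in $1/(1-z^2)$ are the even ones.

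For the left-hand equality I would appeal directly to Proposition~\ref{p(n,2,N)}, which gives
$$
\sum_{N=0}^{\infty}p(N-A,2,N)z^N=\frac{z^A}{(1-z)(1-z^2)}.
$$
Multiplying by $(1-z)$ to form the difference in the variable $N$ gives $\sum_{N=0}^{\infty}\Delta p(N-A,2,N)z^N=z^A/(1-z^2)$; this is exactly the computation already carried out in~\eqref{p2} inside the proof of Proposition~\ref{U2}. Extracting the coefficient of $z^N$ shows the surviving terms are those with $N-A$ even, each with coefficient~$1$, matching $\Delta p(N-A,2)$ term for term. Thus both difference functions share the generating function $z^A/(1-z^2)$ (up to the trivial shift by $z^A$), which forces the stated equality.

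The only genuine subtlety, and the point I would be most careful about, is the bookkeeping of which variable the difference operator $\Delta$ acts on in each expression: on the left it is a difference in the \emph{argument} $N-A$ of the bounded function $p(\cdot,2,N)$ while $N$ itself also varies, whereas on the right it is the difference $\Delta p(n,2)=p(n,2)-p(n-1,2)$ in the single argument of the unbounded function. Reconciling these requires checking that shifting $A\mapsto A+1$ in the bounded case corresponds coefficientwise to the unbounded difference, which the identical generating functions $z^A/(1-z^2)$ confirm once the shift by $z^A$ is accounted for. Beyond that, the argument is a routine expansion of geometric series, so no serious obstacle remains.
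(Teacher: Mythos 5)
Your argument is correct and follows essentially the same route as the paper: both compute $\sum_{N\ge0}\Delta p(N-A,2,N)z^N=z^A/(1-z^2)$ from Proposition~\ref{p(n,2,N)} and match it against the unbounded side, the only cosmetic difference being that the paper reads off $\Delta p(n,2)$ from the closed form $p(n,2)=\lfloor\frac{n+2}{2}\rfloor$ while you extract it from $1/((1-z)(1-z^2))$. Your care about which argument $\Delta$ acts on, resolved via the shift $A\mapsto A+1$, is exactly how the paper's computation in~\eqref{p2} proceeds.
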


\begin{proof}  
From Proposition~\ref{p(n,2,N)} we get
\begin{equation}\label{p(n,2,N)eqb}
\displaystyle\sum_{N=0}^{\infty}\Delta p\big(N-A,2,N\big)z^N = \frac{z^{A}}{\left(1-z^2\right)} = \sum_{N=0}^{\infty}\Delta p\big(N-A,2\big)z^N.
\end{equation}
    The second equality comes from the fact that $p(n,2)=\lfloor\frac{n+2}{2}\rfloor$.
\end{proof}

\subsection{$\protect\qbin{N+3}{ 3}_q$ difference identities}

Here we take the opportunity to display three complete collections of results.  
Half of the individual lines within \eqref{alfalfa3-0} and~\eqref{alfalfa3-1} below appeared in~\cite{LSAMPpn3N}, \cite{HahnPAMS2016}, and~\cite{Hahn2017pn3N} when taken together.  
None of those publications had the complete results of either~\eqref{alfalfa3-0} or~\eqref{alfalfa3-1}.  
The contents of~\eqref{alfalfa3-2} are entirely new. 
\begin{proposition}\label{alfalfa3}
For nonnegative integers $N$, we have
\begin{equation}\label{alfalfa3-0}
\Delta p\left(\left\lfloor\frac{3N}{2}\right\rfloor,3,N \right)=
\begin{dcases}
1,& \mbox{ for } N\equiv0\pmod4,  \\
0,& \mbox{ for } N\equiv1\pmod4, \\
0,& \mbox{ for } N\equiv2\pmod4, \\
0,& \mbox{ for } N\equiv3\pmod4.
\end{dcases}    
\end{equation}
\begin{equation}\label{alfalfa3-1}
\Delta p\left(\left\lfloor\frac{3N}{2}\right\rfloor-1,3,N \right)=
\begin{dcases}
0,& \mbox{ for } N\equiv0\pmod4,  \\
1,& \mbox{ for } N\equiv1\pmod4, \\
1,& \mbox{ for } N\equiv2\pmod4, \\
1,& \mbox{ for } N\equiv3\pmod4.
\end{dcases}    
\end{equation}
\begin{equation}\label{alfalfa3-2}
\Delta p\left(\left\lfloor\frac{3N}{2}\right\rfloor-2,3,N \right)=
\begin{dcases}
1,& \mbox{ for } N\equiv0\pmod4, \\
1,& \mbox{ for } N\equiv1\pmod4, \\
0,& \mbox{ for } N\equiv2\pmod4, \\
1,& \mbox{ for } N\equiv3\pmod4.
\end{dcases}    
\end{equation}
\end{proposition}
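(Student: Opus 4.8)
The plan is to recognize that each left-hand side, being a first difference of consecutive perpendicular coefficients, has a $z$-generating function equal to the difference of two consecutive cases of~\eqref{30} in Proposition~\ref{mainprop}. Writing $G_A(z):=\sum_{N=0}^{\infty}p\left(\left\lfloor\frac{3N}{2}\right\rfloor-A,3,N\right)z^N$ for the rational function supplied by~\eqref{30}, the definition of the difference operator (with $x=1$) gives
$$\sum_{N=0}^{\infty}\Delta p\left(\left\lfloor\frac{3N}{2}\right\rfloor-A,3,N\right)z^N=G_A(z)-G_{A+1}(z),$$
because $\left\lfloor\frac{3N}{2}\right\rfloor-A-1=\left\lfloor\frac{3N}{2}\right\rfloor-(A+1)$. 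Hence \eqref{alfalfa3-0}, \eqref{alfalfa3-1} and \eqref{alfalfa3-2} are exactly the cases $A=0,1,2$, that is, the three differences $G_0-G_1$, $G_1-G_2$ and $G_2-G_3$.

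First I would read off the relevant numerators from~\eqref{30}: for $A=0,1,2$ one uses $a=0$ in the first, second and third case respectively, while for $A=3$ one uses $a=1$ in the first case. This gives, over the common denominator $(1-z)(1-z^2)(1-z^4)$, the numerators $1+z^3$ for $G_0$, $z(1+z)$ for $G_1$, $z^2(1+z+z^2-z^4)$ for $G_2$, and $z^2(1+z^2+z^3-z^6)$ for $G_3$.

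The heart of the computation is then to subtract numerators and cancel against the common denominator. For $G_0-G_1$ the numerator collapses to $1-z-z^2+z^3=(1-z)(1-z^2)$, so $G_0-G_1=\frac{1}{1-z^4}$. For the other two, the numerators factor as $z(1-z^2)(1-z^3)$ and $z^3(1-z^2)(1-z^3)$, yielding $G_1-G_2=\frac{z(1+z+z^2)}{1-z^4}$ and $G_2-G_3=\frac{z^3(1+z+z^2)}{1-z^4}$ after using $\frac{1-z^3}{1-z}=1+z+z^2$. Expanding each against $\frac{1}{1-z^4}=\sum_{k\ge0}z^{4k}$ and extracting the coefficient of $z^N$ produces precisely the period-$4$ patterns recorded in \eqref{alfalfa3-0}--\eqref{alfalfa3-2}.

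Since everything reduces to Proposition~\ref{mainprop} together with elementary factoring, I anticipate no serious obstacle. The only points that need care are the case bookkeeping --- matching the residue class $A\bmod 3$ against $(A+1)\bmod 3$ in~\eqref{30}, so that in particular $G_3$ must be drawn from the $a=1$ instance of the first case --- and verifying the three numerator factorizations; the small-$N$ terms can then be confirmed directly from the series expansions.
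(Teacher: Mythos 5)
Your proposal is correct and follows essentially the same route as the paper: the paper also obtains \eqref{alfalfa3-0} by forming the difference of consecutive cases of Proposition~\ref{mainprop} (this difference is precomputed as~\eqref{U30} in the unimodality section), setting $a=0$ to get $\frac{1}{1-z^4}$, and reading off the period-$4$ pattern, with the remaining two cases omitted ``for brevity.'' Your numerators, factorizations, and the resulting rational functions $\frac{1}{1-z^4}$, $\frac{z(1+z+z^2)}{1-z^4}$, $\frac{z^3(1+z+z^2)}{1-z^4}$ all check out, including the careful point that $G_3$ must be taken from the $a=1$ instance of the first case.
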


\begin{proof}
We prove \eqref{alfalfa3-0}.  
The other results are proved similarly and the corresponding proofs
are omitted for brevity.  

Set $a=0$ in \eqref{U30}. The result is
$$\sum_{N=0}^{\infty}\Delta p\left(\left\lfloor\frac{3N}{2}\right\rfloor,3,N \right)z^n = \frac{1}{1-z^4} = \sum_{N=0}^{\infty}z^{4N}.$$
Thus \eqref{alfalfa3-0} is proved.  
\end{proof}
%5We note that the difference results displayed in Proposition \ref{alfalfa3} can be extended to any $A$ where for $n$ large enough, one has that $\Delta p\left(\left\lfloor\frac{3n}{2}\right\rfloor-A,3,n \right)=\alpha$ for three residue classes modulo 4, and equals $\beta$ for just one residue class modulo 4.  
%The truth of this may be guessed at by examining, for example, the the difference generating function computed in \eqref{U30}.  %This extension is left for the reader to discover.

\subsection{$\protect\qbin{N+4}{ 4}_q$ difference identities}\label{subsection4}

In \cite{LSAMPpn3N}, a quasipolynomial was established for the central coefficient $p(2N,4,N)$ for all $N$.  
With our techniques, in this article we are able to go beyond that result and describe any coefficient or any relation among the coefficients of $\qbin{N+4}{ 4}_q$.  

The following result for first differences of $p(n,4,N)$ follows immediately from~\eqref{U41} and~\eqref{U42}.  
It may be of interest to examine second differences, $\Delta_2 p(n,4,N)$, and beyond.  
\begin{proposition}\label{4cor}
Let $N,a\ge0$.  
Then
\begin{equation}\label{4coreqE}
        \Delta p\big(2N-2a,4,N\big) 
      = \Delta_{a+1} p\big(N-a,3\big)
    \end{equation}
    and
\begin{equation}\label{4coreqO}
   \Delta p\big(2N-(2a+1),4,N\big)=\Delta_{a} p\big(N-2-a,3\big).
   \end{equation}
\end{proposition}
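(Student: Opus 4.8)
The plan is to read off both identities directly by comparing coefficients in the generating-function identities \eqref{U41} and \eqref{U42}, after re-expressing their right-hand sides through the generating function for $p(n,3)$.

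First I would recall the standard generating function
\[
\sum_{n=0}^{\infty} p(n,3)\,z^{n} = \frac{1}{(z;z)_3},
\]
valid because $p(n,3)$ enumerates partitions of $n$ into at most three parts; this is the limiting case $N\to\infty$ of Proposition~\ref{GP=p(n,m,N)}. From the definition of the difference operator in Definition~\ref{DeltapartitionsA}, multiplying this series by $1-z^{x}$ shifts the index by $x$ and subtracts, producing
\[
\sum_{n=0}^{\infty} \Delta_{x} p(n,3)\,z^{n} = \frac{1-z^{x}}{(z;z)_3}.
\]

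Next I would take the second form of the right-hand side of \eqref{U41}, namely $z^{a}(1-z^{a+1})/(z;z)_3$, and rewrite it using the previous formula with $x=a+1$ as $z^{a}\sum_{n\ge0}\Delta_{a+1}p(n,3)\,z^{n}$. Re-indexing by $N=n+a$ shows that the coefficient of $z^{N}$ on the left of \eqref{U41}, which is $\Delta p(2N-2a,4,N)$, equals $\Delta_{a+1}p(N-a,3)$; this is precisely \eqref{4coreqE}. The odd case \eqref{4coreqO} is handled identically: the right-hand side of \eqref{U42} is $z^{a+2}\sum_{n\ge0}\Delta_{a}p(n,3)\,z^{n}$, and extracting the coefficient of $z^{N}$ after the shift $N=n+a+2$ yields $\Delta p(2N-(2a+1),4,N)=\Delta_{a}p(N-2-a,3)$.

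The only point requiring a word of care — and the closest thing to an obstacle, though it is entirely routine — is the boundary behaviour for small $N$. When $N<a$ (respectively $N<a+2$) the coefficient on the left vanishes because of the leading factor $z^{a}$ (respectively $z^{a+2}$); on the right, the argument $N-a$ (respectively $N-2-a$) is then negative, so both partition numbers in the difference are zero by the usual convention, and the identity remains valid. Consequently no separate case analysis is needed, and Proposition~\ref{4cor} follows immediately from \eqref{U41} and \eqref{U42}.
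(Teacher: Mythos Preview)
Your proof is correct and follows exactly the paper's approach: the paper's proof simply states that the result ``follows immediately from \eqref{U41} and \eqref{U42},'' and the generating-function rewriting you perform is precisely the observation the paper makes in the paragraph introducing Section~\ref{ID}. Your additional remark on the boundary cases $N<a$ and $N<a+2$ is a welcome clarification that the paper leaves implicit.
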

\begin{proof}
    The result follows immediately from \eqref{U41} and \eqref{U42}.
\end{proof}
Proposition~\ref{4isD3}, introduced in Section~\ref{sectionstructure}, is restated as part of a corollary to Proposition~\ref{4cor}.  
\begin{corollary}\label{4isD3c}
Let $N\ge 0$. 
Then
\begin{equation}
\Delta p\big(2N,4,N \big)= \Delta p\big(N,3\big)%=\begin{cases}p(2N-1,2) \mbox{ for } N\equiv 1,4,5\pmod6, \\p(2N,2) \mbox{ for } N\equiv 0,2,3\pmod6\end{cases}    
\end{equation}   
and
\begin{equation}%\label{repp(2N-1-2,4,N)eq2}
        \Delta p(2N-1,4,N) = 0.
    \end{equation}
\end{corollary}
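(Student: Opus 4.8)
The plan is to derive Corollary~\ref{4isD3c} directly as the $a=0$ specialization of Proposition~\ref{4cor}, which I am entitled to assume. The key observation is that both equations of the corollary are literally the equations of Proposition~\ref{4cor} evaluated at $a=0$, modulo simplifying the partition-difference operators on the right-hand side.

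For the first equation, I would set $a=0$ in~\eqref{4coreqE}. The left-hand side becomes $\Delta p\big(2N,4,N\big)$ immediately. On the right-hand side, $\Delta_{a+1}p(N-a,3)$ becomes $\Delta_1 p(N,3)$, and since the subscript $1$ is omitted by the convention in Definition~\ref{DeltapartitionsA}, this is exactly $\Delta p(N,3)$. Hence $\Delta p\big(2N,4,N\big)=\Delta p(N,3)$, which is the first claimed identity.

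For the second equation, I would set $a=0$ in~\eqref{4coreqO}. The left-hand side becomes $\Delta p(2N-1,4,N)$, matching the corollary exactly. On the right-hand side, $\Delta_{a}p(N-2-a,3)$ becomes $\Delta_0 p(N-2,3)$, and by the final sentence of Definition~\ref{DeltapartitionsA} the difference function with subscript $x=0$ is identically zero. Therefore $\Delta p(2N-1,4,N)=0$, as required.

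There is essentially no obstacle here beyond invoking the correct conventions: the entire content is the bookkeeping that $\Delta_1=\Delta$ and that $\Delta_0$ vanishes, together with the arithmetic $N-a=N$ and $N-2-a=N-2$ at $a=0$. The only point demanding care is confirming that the corollary's stated form genuinely coincides with the $a=0$ instances rather than requiring any further manipulation; once the subscript conventions from Definition~\ref{DeltapartitionsA} are applied, both identities fall out with no residual computation. I would present this as a two-line proof, one line per equation, explicitly citing the $a=0$ substitution into \eqref{4coreqE} and \eqref{4coreqO}.
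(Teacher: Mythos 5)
Your proposal is correct and coincides with the paper's own proof, which likewise just sets $a=0$ in \eqref{4coreqE} and \eqref{4coreqO} of Proposition~\ref{4cor}; your explicit appeal to the conventions $\Delta_1=\Delta$ and $\Delta_0=0$ from Definition~\ref{DeltapartitionsA} is exactly the bookkeeping the paper leaves implicit.
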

\begin{proof}
    Set $a=0$ in lines \eqref{4coreqE} and \eqref{4coreqO} of Proposition~\ref{4cor}. % and the proof of Proposition \ref{4isD3} is immediate. 
\end{proof}

We also have a very general Ramanujan-style partition congruence result that extends to all primes.  
\begin{proposition}\label{pn3congruences}
  Let $\ell$ be a prime.  
  Whenever $a=6\ell j-1$, then
  \begin{equation}\label{ex3}
      \Delta p\big(2N-2a,4,N\big) 
      \equiv0\pmod{\ell}.
  \end{equation}
Whenever $a=6\ell j$, then
  \begin{equation}
      \Delta p\big(2N-(2a+1),4,N\big) 
      \equiv0\pmod{\ell}.
  \end{equation}
\end{proposition}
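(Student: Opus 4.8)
The plan is to read off the congruences directly from the explicit generating functions in Proposition~\ref{4cor}, exploiting the factored form already isolated in~\eqref{U41} and~\eqref{U42}. Proposition~\ref{4cor} tells us that $\Delta p(2N-2a,4,N)=\Delta_{a+1}p(N-a,3)$ and $\Delta p(2N-(2a+1),4,N)=\Delta_a p(N-2-a,3)$, so the problem reduces to showing that the shifted first-difference operators $\Delta_{a+1}$ and $\Delta_a$ applied to $p(n,3)$ vanish modulo $\ell$ under the stated hypotheses on~$a$. Translating to generating functions via~\eqref{U41}, the key object is $\sum_{n\ge0}\Delta_{a+1}p(n,3)z^n=(1-z^{a+1})/(z;z)_3=(1-z^{a+1})/\big((1-z)(1-z^2)(1-z^3)\big)$, and analogously $(1-z^a)/(z;z)_3$ for the odd case.

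First I would substitute the hypothesis $a=6\ell j-1$ into the even case, so that $a+1=6\ell j$, and substitute $a=6\ell j$ into the odd case, so that the exponent on the numerator is again a multiple of $6\ell$. In both cases the relevant factor becomes $1-z^{6\ell j}$, and the point is that $6\ell j$ is a multiple of $\lcm(1,2,3)=6$, so $1-z^{6\ell j}$ is divisible by each of $(1-z)$, $(1-z^2)$, and $(1-z^3)$. More usefully, I would write $1-z^{6\ell j}=(1-z^{6j})\,\Phi(z)$ where $\Phi(z)=1+z^{6j}+z^{12j}+\cdots+z^{6(\ell-1)j}$ has exactly $\ell$ terms, or alternatively factor out a cyclotomic-type piece of the form $(1-z^{6\ell j})/(1-z^{6j})$. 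The cleanest route is to observe that $(1-z^{6\ell j})/\big((1-z)(1-z^2)(1-z^3)\big)$ has integer coefficients equal to $p(n,3)-p(n-6\ell j,3)$, and that modulo~$\ell$ the factor $\sum_{i=0}^{\ell-1}z^{6ji}$ collapses.

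The heart of the argument is a congruence of the shape $\Delta_{6\ell j}p(n,3)\equiv 0\pmod\ell$. The plan is to establish this by writing $1-z^{6\ell j}\equiv(1-z^{6j})^\ell\pmod\ell$, using the Frobenius/freshman's-dream identity $(1-w)^\ell\equiv 1-w^\ell\pmod\ell$ valid for the prime~$\ell$ in the ring $\Z[z]$ (with $w=z^{6j}$). Consequently,
\begin{equation*}
\frac{1-z^{6\ell j}}{(z;z)_3}\equiv\frac{(1-z^{6j})^\ell}{(1-z)(1-z^2)(1-z^3)}\pmod\ell,
\end{equation*}
and since $1-z^{6j}$ is divisible by $(1-z)$, $(1-z^2)$, and $(1-z^3)$ simultaneously, the right-hand side is $(1-z^{6j})^{\ell-3}$ times $(1-z^{6j})^3/\big((1-z)(1-z^2)(1-z^3)\big)$, an honest polynomial, multiplied by the leftover factor $(1-z^{6j})^{\ell-3}$ with all coefficients divisible by nothing extra — but crucially each coefficient of the displayed power series is an integer multiple of a single factor of $\ell$ coming from the multinomial expansion of $(1-z^{6j})^\ell$, whose non-endpoint binomial coefficients $\binom{\ell}{k}$ for $1\le k\le\ell-1$ are all divisible by~$\ell$. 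After accounting for the shift $z^{a}$ respectively $z^{a+2}$ in front (which only translates the index~$N$ and does not affect divisibility of coefficients), every coefficient of the resulting series is divisible by~$\ell$, which is exactly the asserted congruence.

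The main obstacle I anticipate is bookkeeping the three genuine divisions by $(1-z)$, $(1-z^2)$, $(1-z^3)$ while simultaneously tracking divisibility by~$\ell$: one must be careful that after dividing $(1-z^{6j})^\ell$ by the cubic denominator the integrality is preserved and that the single surviving power of~$\ell$ is not accidentally cancelled. The clean way to avoid this pitfall is to keep the three denominator factors absorbed into three of the $\ell$ copies of $(1-z^{6j})$ (legitimate since $\ell\ge2$ forces $\ell\ge3$ only for $\ell\ge3$; the cases $\ell=2$ must be checked separately, where $(1-z^{6j})^2$ supplies only two factors and one argues directly that $\binom{2}{1}=2\equiv0\pmod 2$ gives the needed divisibility). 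Handling the small primes $\ell=2,3$ where $\ell-3<0$ as explicit special cases, and otherwise applying the freshman's-dream reduction, completes the proof.
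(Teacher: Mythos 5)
You reduce the statement exactly as the paper does: by Proposition~\ref{4cor} (i.e., \eqref{U41} and~\eqref{U42}), with $a=6\ell j-1$ in the even case and $a=6\ell j$ in the odd case, everything comes down to the single congruence $\Delta_{6\ell j}p(n,3)\equiv0\pmod{\ell}$. The paper stops there and quotes Theorem~1 of~\cite{KronholmINTEGERS} for that fact; you instead try to prove it on the spot with the freshman's dream, and that step is where your argument breaks. The identity $(1-z^{6j})^{\ell}\equiv1-z^{6\ell j}\pmod{\ell}$ only tells you that the two power series $\frac{1-z^{6\ell j}}{(z;z)_3}$ and $\frac{(1-z^{6j})^{\ell}}{(z;z)_3}$ are congruent coefficientwise modulo~$\ell$; it does not make either of them vanish modulo~$\ell$. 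Your claim that every coefficient of the latter carries ``a single factor of $\ell$ coming from the multinomial expansion'' is false, because the extreme binomial coefficients $\binom{\ell}{0}=\binom{\ell}{\ell}=1$ are not divisible by~$\ell$: concretely, for $\ell=3$ and $j=1$ the quotient is the polynomial $(1+z+\cdots+z^5)(1+z^2+z^4)(1+z^3)$, whose constant term is~$1$.

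What your computation actually establishes (for $\ell\ge3$; for $\ell=2$ the quotient $(1-z^{6j})^{2}/(z;z)_3$ is not even a polynomial, since $(1-z)^3$ divides $(z;z)_3$ while only $(1-z)^2$ divides $(1-z^{6j})^2$) is that $\frac{1-z^{6\ell j}}{(z;z)_3}$ is congruent modulo~$\ell$ to the explicit polynomial $(1-z^{6j})^{\ell-3}\prod_{i=1}^{3}\frac{1-z^{6j}}{1-z^{i}}$ of degree $6\ell j-6$, hence that $\Delta_{6\ell j}p(n,3)\equiv0\pmod{\ell}$ only for $n>6\ell j-6$. For small $n$ the congruence genuinely fails under the convention $p(n,3)=0$ for $n<0$: for instance $\Delta_{6\ell j}p(0,3)=p(0,3)=1$, which corresponds to $\Delta p(0,4,a)=1$ at $N=a$ in the proposition. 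So the Frobenius trick alone cannot deliver the statement for all $N$; one either needs Kronholm's theorem as cited (whose formulation accounts for the initial range, e.g.\ via the quasipolynomial extension of $p(n,3)$ to negative arguments), or an explicit restriction on~$N$ combined with your argument for the large-$n$ range.
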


\begin{proof}
    The proof of each line in Proposition~\ref{pn3congruences} follows
    from the fact that for any $j\ge0$, we have 
    $$\Delta_{6\ell j}p(n,3) %= p(6\ell jk-r,3)-p(6\ell j(k-1)-r,3)
    \equiv0\pmod{\ell},$$ 
    which follows from Theorem~1 proved in~\cite{KronholmINTEGERS}.  
\end{proof}

We display an example of line~\eqref{ex3} from Proposition~\ref{pn3congruences}.   
\begin{example}
Let $\ell=5$ and $j=1$ so that in line~\eqref{ex3} we have $a=5\cdot6=30$.  
    Now with $N=67$ we compute:
    $$\Delta p(2(67)-2(29),4,67)=p(76,4,67)-p(75,4,67)=3648-3518=130\equiv0\pmod5.$$
Equivalently, 
$$\Delta p(2(67)-2(29),4,67)=\Delta_{30}p(38,3)=p(38,3)-p(8,3)=140-10=130\equiv0~\,\,(\text{mod }5).$$
\end{example}

\subsection{$\protect\qbin{N+5}{ 5}_q$ difference identities}

\begin{proposition}\label{5is3Prop}
%\noteB{go back to $p(n,3)$}
For all nonnegative integers $N$, we have
\begin{equation}\label{5is3eq}
 \Delta p\left(\left\lfloor \frac{5N}{2} \right\rfloor,5,N\right)=   \begin{dcases}
p(n,3), &\mbox{ for } N=4n, \\
p(n-1,3)+p(n-3,3), &\mbox{ for } N=4n+1, \\
p(n-4,3),& \mbox{ for } N=4n+2, \\
p(n-1,3)+p(n-2,3), &\mbox{ for } N=4n+3. 
\end{dcases} 
\end{equation}   
\end{proposition}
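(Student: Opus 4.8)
The plan is to reduce the four-case identity \eqref{5is3eq} to a single rational-function identity in~$z$. Since the left-hand generating function is already available to us in closed form, everything comes down to recognizing the right-hand side as the coefficient sequence of one explicit rational function and then matching the two.

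First I would record the left-hand side. The series $\sum_{N\ge0}\Delta p(\left\lfloor 5N/2\right\rfloor,5,N)z^N$ is precisely the $a=0$ specialization of the computation already carried out in Section~\ref{uni}: subtracting item~\eqref{A5.1} from item~\eqref{A5.0} of Proposition~\ref{GF5} and setting $a=0$ in~\eqref{15a-(15a+1)} gives, with the numerator reduction recorded there,
\[
\sum_{N=0}^{\infty}\Delta p\!\left(\left\lfloor\tfrac{5N}{2}\right\rfloor,5,N\right)z^N=\frac{1-z+z^5-2z^6+2z^7-z^8+z^{12}-z^{13}}{(1-z)(1-z^4)(1-z^6)(1-z^8)}.
\]

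Next I would assemble the right-hand side of~\eqref{5is3eq} into one rational function. Let $P(w):=\sum_{n\ge0}p(n,3)w^n=1/\bigl((1-w)(1-w^2)(1-w^3)\bigr)$. Splitting the sum over~$N$ into the four residue classes $N\equiv0,1,2,3\pmod 4$ and writing $N=4n+r$, each class contributes a shifted copy of $P(z^4)$: the class $N=4n$ contributes $P(z^4)$, the class $N=4n+1$ contributes $(z^5+z^{13})P(z^4)$, the class $N=4n+2$ contributes $z^{18}P(z^4)$, and the class $N=4n+3$ contributes $(z^7+z^{11})P(z^4)$. Summing and using $P(z^4)=1/\bigl((1-z^4)(1-z^8)(1-z^{12})\bigr)$ shows that the right-hand side of~\eqref{5is3eq} is the coefficient sequence of
\[
\frac{1+z^5+z^7+z^{11}+z^{13}+z^{18}}{(1-z^4)(1-z^8)(1-z^{12})}.
\]

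The proof is then completed by verifying that the two displayed rational functions are equal. After cancelling the common factor $(1-z^4)(1-z^8)$, this reduces to the single polynomial identity $(1-z+z^5-2z^6+2z^7-z^8+z^{12}-z^{13})(1-z^{12})=(1+z^5+z^7+z^{11}+z^{13}+z^{18})(1-z)(1-z^6)$, which is checked by expanding both sides (each equals $1-z+z^5-2z^6+2z^7-z^8-z^{17}+2z^{18}-2z^{19}+z^{20}-z^{24}+z^{25}$). I expect the main obstacle to be bookkeeping rather than anything conceptual: getting the index shifts right in the four-class assembly, so that exactly the powers $z^5,z^7,z^{11},z^{13},z^{18}$ appear in the numerator, and then carrying out the final polynomial expansion without sign errors. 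As an alternative to the assembly step, one could instead apply the $4$-dissection of Lemma~\ref{lem:s} directly to the left-hand generating function and identify each of the four resulting pieces with the appropriate shift of~$P$; this is the same computation run in reverse.
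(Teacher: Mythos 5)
Your proposal is correct and follows essentially the same route as the paper: the paper likewise starts from the $a=0$ case of \eqref{15a-(15a+1)}, rewrites the generating function with denominator $(1-z^4)(1-z^8)(1-z^{12})$ and numerator $1+z^5+z^7+z^{11}+z^{13}+z^{18}$, and reads off the four residue classes modulo~$4$ against $\sum_n p(n,3)z^n$. The only difference is direction of travel --- you assemble the right-hand side into that rational function and verify a polynomial identity, whereas the paper simplifies the left-hand side into that form and extracts the classes --- which, as you note yourself, is the same computation run in reverse.
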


\begin{proof}
Set $a=0$ in \eqref{15a-(15a+1)} to obtain the following rational function:
\begin{align*}%\label{ID5gf}
   \sum_{N=0}^{\infty}\Delta p\left(\left\lfloor \frac{5N}{2}
   \right\rfloor,5,N\right)z^N
&= \frac{1+z^5-z^6+z^7+z^{12}}{\left(1-z^4\right) \left(1-z^6\right) \left(1-z^8\right)}\\
&= \frac{1+z^5+z^7+z^{11}+z^{13}+z^{18}}
   {\left(1-z^4\right) \left(1-z^8\right) \left(1-z^{12}\right)}.
\end{align*}
From the last expression, it is easy to extract the coefficients
of~$z^N$ with $N$ in a particular residue class modulo~4. To be
precise, the generating function for the differences
$p\left(\left\lfloor \frac{5N}{2} \right\rfloor,5,N\right)$
with $N\equiv0$~(mod~4) equals
$$
\frac{1}
   {\left(1-z^4\right) \left(1-z^8\right) \left(1-z^{12}\right)},
$$
the generating function for those with $N\equiv1$~(mod~4) equals
$$
\frac{z^5+z^{13}}
   {\left(1-z^4\right) \left(1-z^8\right) \left(1-z^{12}\right)},
$$
the generating function for those with $N\equiv2$~(mod~4) equals
$$
\frac{z^{18}}
   {\left(1-z^4\right) \left(1-z^8\right) \left(1-z^{12}\right)},
$$
and the generating function for those with $N\equiv3$~(mod~4) equals
$$
\frac{z^7+z^{11}}
   {\left(1-z^4\right) \left(1-z^8\right) \left(1-z^{12}\right)}.
$$
In view of
\begin{equation}\label{p(n,3)gf}
    \frac{1}{(1-z)(1-z^2)(1-z^3)}
    =\sum_{n=0}^{\infty}p\big(n,3\big)z^n,
\end{equation}
the claims in \eqref{5is3eq} are now obvious.
 \end{proof}

\subsection{$\protect\qbin{N+6}{ 6}_q$ difference identities}

An analysis of $\Delta p(n,6,N)$ results in further compelling identities.  
We offer just one here.
The proof is similar to that of Proposition~\ref{5is3Prop} and is omitted.
\begin{proposition}\label{N+61stdiff}
  For all positive integers $N$, we have
    \begin{equation}
 \Delta p\left(3N,6,N\right)=   \begin{dcases}
p(N\mid\mbox{parts from the set }\{1,2,3,5\}),  & \mbox{ for $N$ even,}\\% n\equiv 0\pmod2, \\
p(N-7\mid\mbox{parts from the set }\{1,2,3,5\}), &\mbox{ for $N$ odd.}% n\equiv 1\pmod2
\end{dcases} 
\end{equation} 
\end{proposition}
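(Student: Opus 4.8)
The plan is to mirror exactly the proof of Proposition~\ref{5is3Prop}, only dissecting modulo~$2$ in place of modulo~$4$. First I would retrieve from the appendix the two perpendicular generating functions for $m=6$ that are needed to form the central first difference, namely the $A=0$ series $\sum_{N\ge0}p(3N,6,N)z^N$ and the $A=1$ series $\sum_{N\ge0}p(3N-1,6,N)z^N$, and subtract them to obtain
$$
\sum_{N=0}^{\infty}\Delta p\big(3N,6,N\big)z^N
=\sum_{N=0}^{\infty}\big(p(3N,6,N)-p(3N-1,6,N)\big)z^N .
$$
By Theorem~\ref{thm:even} with $M=3$, each of the two series is rational with common denominator $(1-z^2)(z;z)_5$, so their difference is a single rational function that I can compute directly.

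The key step is then to reduce this difference to the closed form
$$
\sum_{N=0}^{\infty}\Delta p\big(3N,6,N\big)z^N
=\frac{1+z^{7}}{(1-z^2)(1-z^4)(1-z^6)(1-z^{10})}.
$$
I expect to establish this identity exactly as in the $m=5$ case: clear the least common denominator and verify the resulting polynomial identity, or equivalently regroup the numerator into manifestly telescoping blocks. This simplification is the main obstacle, since it is the only genuinely computational ingredient and the cancellations between the $A=0$ and $A=1$ numerators are delicate; everything afterwards is purely formal.

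With the closed form in hand, the proposition follows just as Proposition~\ref{5is3Prop} did. Because the denominator $(1-z^2)(1-z^4)(1-z^6)(1-z^{10})$ contains only even powers of~$z$, the two numerator terms $1$ and $z^7$ separate the series by the parity of~$N$: the even-index contributions come from $\frac{1}{(1-z^2)(1-z^4)(1-z^6)(1-z^{10})}$ and the odd-index contributions from $\frac{z^7}{(1-z^2)(1-z^4)(1-z^6)(1-z^{10})}$. Finally I would identify the coefficients through the substitution $z\mapsto z^2$ in
$$
\frac{1}{(1-z)(1-z^2)(1-z^3)(1-z^5)}=\sum_{n=0}^{\infty}p\big(n\mid\text{parts from }\{1,2,3,5\}\big)z^{n},
$$
the exact analogue of~\eqref{p(n,3)gf}. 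Reading off the coefficient of $z^N$ in each residue class modulo~$2$ then produces the two cases of the stated formula, the even-$N$ values being enumerated by partitions into parts from $\{1,2,3,5\}$ and the odd-$N$ values by the same enumeration pushed forward by the $z^7$ shift.
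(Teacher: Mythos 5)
Your strategy is exactly the one the paper intends (the proof is omitted there with the remark that it parallels Proposition~\ref{5is3Prop}): subtract the $A=1$ case from the $A=0$ case of the $m=6$ formulas in the appendix, reduce to a closed form whose denominator contains only even powers of~$z$, dissect by parity, and substitute $z\mapsto z^2$. However, the closed form you propose to verify is false, and this is precisely the step you yourself identify as the main obstacle. Carrying out the subtraction with $a=0$ gives
\begin{equation*}
\sum_{N=0}^{\infty}\Delta p\big(3N,6,N\big)z^N
=\frac{1-z^2-z^3+z^6+z^7-z^9}{(1-z^2)^2(1-z^3)(1-z^4)(1-z^5)}
=\frac{1+z^{15}}{(1-z^2)(1-z^4)(1-z^6)(1-z^{10})},
\end{equation*}
the last equality being equivalent to the polynomial identity
$(1-z^2-z^3+z^6+z^7-z^9)(1+z^3)(1+z^5)=(1-z^2)(1+z^{15})$. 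The numerator is $1+z^{15}$, not $1+z^7$. Your version is already refuted at $N=7$: one computes $p(21,6,7)=p(20,6,7)=94$, so $\Delta p(21,6,7)=0$, whereas $\frac{1+z^{7}}{(1-z^2)(1-z^4)(1-z^6)(1-z^{10})}$ has coefficient $1$ at $z^7$. In fact $\Delta p(3N,6,N)$ vanishes for every odd $N\le 13$, and the first nonzero odd-index value occurs at $N=15$.

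A second point, which is partly a defect of the statement as printed: after the parity dissection and the substitution $z\mapsto z^2$, what actually emerges is $\Delta p(6n,6,2n)=p(n\mid\mbox{parts from }\{1,2,3,5\})$ and $\Delta p(3(2n+1),6,2n+1)=p(n-7\mid\mbox{parts from }\{1,2,3,5\})$; that is, the argument of $p$ is $N/2$, respectively $\frac{N-1}{2}-7$, and not $N$, respectively $N-7$. (Compare $N=2$: $\Delta p(6,6,2)=1$ while $p(2\mid\{1,2,3,5\})=2$.) The proposition must therefore be read in the same style as Proposition~\ref{5is3Prop}, with the two cases parametrized as $N=2n$ and $N=2n+1$. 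Your closing sentence, which asserts that the two displayed cases come out literally via "the $z^7$ shift", glosses over this rescaling; with the correct numerator $1+z^{15}$ the shift in the odd case is by $z^{15}$, which after the relabeling $z^2\mapsto z$ becomes the shift by $7$ appearing in the statement.
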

We note that $p(n\mid\mbox{parts from the set }\{1,2,3,5\})$ is equivalent to $\Delta_4 p(n,5)$.

\section{Proofs of Theorems \ref{thm:even} and \ref{thm:odd}}\label{CKproof}

In this section, we present the proofs of our main results,
Theorems~\ref{thm:even} and~\ref{thm:odd}.

To begin with, we provide the formal definition of a quasipolynomial
sequence and then quote the standard theorem that characterizes
the generating functions for such sequences.
\begin{definition} \label{def:quasi}
A sequence $\{f(n)\}_{n\ge0}$ is {\em quasipolynomial} if there exist $d$ polynomials $f_0{(n)},\ldots, f_{d-1}{(n)}$ such that 
\[
 f(n)  =
\begin{cases}
f_{0}(n), & \text{if $n\equiv0$ (mod }d),\\
f_{1}(n), & \text{if $n\equiv1$ (mod }d),\\
~~~\vdots & ~~~~~~\vdots\\
f_{d-1}(n), & \text{if $n\equiv{d-1}$ (mod }d),
\end{cases}
\]
for all $n\in\Z$. The polynomials $f_i$ are called the constituents of~$f$ and the number of them, $d$, is the period of~$f$. 
\end{definition}

\begin{example}\label{QPp(n,3)}
For example, the infinite sequence $\{p(n,3)\}_{n\ge0}$ is described by a quasipolynomial of period six.  
Namely, for nonnegative integers $n$, we have
\begin{equation}\label{Ep(6k+0,3)}
%\small\vspace{-5pt}
p(n,3)=\begin{cases}
1\tbinom{k+2}{2}+4\tbinom{k+1}{2}+1\tbinom{k}{2}=3k^2+3
k +1,&\text{if }n=6k,\\
1\tbinom{k+2}{2}+5\tbinom{k+1}{2} =3k^2+4k+1,&\text{if }n=6k+1,\\
2\tbinom{k+2}{2}+4\tbinom{k+1}{2} =3k^2+5k+2,&\text{if }n=6k+2,\\
3\tbinom{k+2}{2}+3\tbinom{k+1}{2} =3k^2+6 k+ 3,&\text{if }n=6k+3,\\
4\tbinom{k+2}{2}+2\tbinom{k+1}{2} =3k^2+ 7k+4,&\text{if }n=6k+4,\\
5\tbinom{k+2}{2}+1\tbinom{k+1}{2} =3k^2+8k+5,&\text{if }n=6k+5.
\end{cases} 
%    \caption{Quasipolynomial for $p(n,3)$}
\end{equation}    
\end{example}
\begin{remark}
For further information on how quasipolynomials of this variety
are computed and the associated geometry associated with integer
partitions, 
see~\cite{GuptaTechnique,BEK-6j-1,LSAMPpn3N,gupta}.   
\end{remark}

Next we recall \cite[Prop.~4.4.1]{StanBT}.

\begin{proposition} \label{prop:2}
A sequence $\{f(n)\}_{n\ge0}$ is quasipolynomial if and only if
its generating function $\sum_{n\ge0}f(n)z^n$ is rational in~$z$
and all roots of the denominator of the rational function are roots of unity.
\end{proposition}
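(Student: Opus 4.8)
The plan is to prove the two implications separately, in each case reducing everything to the elementary expansion $\sum_{k\ge0}\binom{k+j}{j}w^k=1/(1-w)^{j+1}$ and its close relatives. The forward direction is constructive, the backward direction rests on partial fractions.

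For the forward direction (quasipolynomial $\Rightarrow$ rational with all denominator roots on the unit circle), suppose $f$ has period $d$ with constituents $f_0,\dots,f_{d-1}$. I would split the generating function according to residue classes,
$$\sum_{n\ge0}f(n)z^n=\sum_{r=0}^{d-1}z^r\sum_{k\ge0}f_r(dk+r)\,z^{dk}.$$
For each fixed $r$ the map $k\mapsto f_r(dk+r)$ is a polynomial in $k$, of some degree $D_r$. Since every polynomial in $k$ is a $\C$-linear combination of the binomials $\binom{k+j}{j}$, the inner sum (with $w=z^d$) is rational with denominator a power of $1-z^d$. Summing over $r$ then yields a rational function whose denominator divides $(1-z^d)^{D+1}$ with $D=\max_r D_r$; all of its roots are $d$-th roots of unity, as required.

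For the backward direction I would start from $\sum_n f(n)z^n=P(z)/Q(z)$ with every root of $Q$ a root of unity, and pass to the partial fraction expansion over $\C$. Writing $Q$ in terms of factors $1-\gamma z$ with each $\gamma$ a root of unity (the reciprocals of the roots of $Q$ are again roots of unity), the expansion produces a finite sum of terms $c/(1-\gamma z)^{j}$, possibly plus a polynomial. The key computation is $1/(1-\gamma z)^{j}=\sum_n\binom{n+j-1}{j-1}\gamma^n z^n$: here $\binom{n+j-1}{j-1}$ is a polynomial in $n$ while $\gamma^n$ is periodic in $n$ with period equal to the order of $\gamma$. Hence each term contributes a quasipolynomial in~$n$, and a finite sum of quasipolynomials is again quasipolynomial, with period the least common multiple of the individual periods; the degenerate root $\gamma=1$ is exactly what produces the genuinely polynomial (period-one) part of a constituent.

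The point requiring the most care is this backward direction. The partial fraction expansion of $P/Q$ contains a genuine polynomial part precisely when $\deg P\ge\deg Q$, and such a part contributes to only finitely many coefficients, which cannot be absorbed into a quasipolynomial valid for \emph{all} $n$. I would therefore first perform polynomial division and observe that, for the generating functions arising in Theorems~\ref{thm:even} and~\ref{thm:odd}, the rational functions are proper ($\deg P<\deg Q$), so no polynomial part appears; equivalently, one adopts the standard convention that quasipolynomiality need only hold for all sufficiently large~$n$. The remaining delicate step is bookkeeping of periods: one must take $d$ to be the least common multiple of the orders of all roots of~$Q$, and then check that regrouping the partial-fraction contributions by residue class modulo this~$d$ recombines them into honest polynomial constituents $f_0,\dots,f_{d-1}$. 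Everything else is routine manipulation of geometric-type series.
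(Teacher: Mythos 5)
Your proof is correct and is the standard argument: the paper does not prove this proposition at all but simply recalls it from Stanley (Proposition~4.4.1 of the cited reference), whose proof is exactly your two implications --- residue-class splitting plus the binomial series $\sum_{k\ge0}\binom{k+j}{j}w^k=(1-w)^{-j-1}$ for the forward direction, and partial fractions over roots of unity for the converse. The subtlety you flag is real: Stanley's statement includes the hypothesis $\deg P<\deg Q$ (in lowest terms), which the paper's formulation omits, and without it the converse only yields quasipolynomiality for all sufficiently large $n$; your resolution (polynomial division, or checking properness for the generating functions actually arising in Theorems~\ref{thm:even} and~\ref{thm:odd}) is the right fix.
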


Now, by definition, we have
$$
p\left(\left\lfloor\frac {mN} {2}\right\rfloor-A,m,N\right)
=\coef{q^{\fl{mN/2}-A}}\bmatrix m+N\\m\endbmatrix_q.
$$

\medskip
First, we consider the case where $m$ is even, say $m=2M$.
We are then talking of
$$
p\left({MN} -A,2M,N\right)
=\coef{q^{{MN}-A}}\bmatrix 2M+N\\2M\endbmatrix_q.
$$
We claim that, for fixed $M$ and $A$, this is quasipolynomial in~$N$.
To see this, we write
\begin{equation} \label{eq:1} 
\bmatrix 2M+N\\2M\endbmatrix_q
=\frac {
\prod _{j=1} ^{2M}(1-q^{N+j})} {
\prod _{j=1} ^{2M}(1-q^j)}
=\frac {\sum_{i=0}^{2M}c_i(q)q^{iN}}
{\prod _{j=1} ^{2M}(1-q^j)},
\end{equation}
for certain polynomials $c_i(q)$, $i=0,1,\dots,2M$.
Since
$$
\coef{q^N}\frac {1}{\prod _{j=1} ^{2M}(1-q^j)}
$$
is a quasipolynomial in $N$, the same is true for
$$
\coef{q^{sN-B}}\frac {1}{\prod _{j=1} ^{2M}(1-q^j)}
$$
for any fixed $s$ and $B$. In view of~\eqref{eq:1}, this confirms our
claim.

Consequently, by Proposition~\ref{prop:2},
we know {\it a priori} that the generating function
$$
\sum_{N\ge0}p\left({MN} -A,2M,N\right)z^N
$$
is a rational function in $z$
and all roots of the denominator of the rational function are
roots of unity.

\medskip
Next we express this generating function in terms of a complex
contour integral. Namely, we have
\begin{align*}
\sum_{N\ge0}p\left({MN} -A,2M,N\right)z^N
&=
\sum_{N\ge0}z^N
\coef{q^{{MN}-A}}\bmatrix 2M+N\\2M\endbmatrix_q\\
&=
\sum_{N\ge0}z^N
\frac {1} {2\pi i}\int_{\mathcal C}\frac {dq} {q^{MN-A+1}}
\frac {(q^{2M+1};q)_{N}} {(q;q)_{N}},
\end{align*}
where $\mathcal C$ is a contour that encircles the origin once in positive
direction. We choose $z$ and the
radius of this circle so that $1>|q|^{M}>z>0$.
(In particular, we choose $z$ to be real and positive.)
The sum over~$N$ can be evaluated by means of the $q$-binomial theorem
 (cf.\ \cite[Eq.~(1.3.2)]{GaRaAF})
\begin{equation} \label{eq:qbinth} 
\sum_{N\ge0}\frac {(\alpha;q)_N} {(q;q)_N}Z^N
 =  \frac {{( \alpha Z;q)_\infty}}{{(Z;q)_\infty}}
\end{equation}
with $\alpha=q^{2M+1}$ and $Z=z/q^M$.
Thereby we obtain
\begin{align*}
\sum_{N\ge0}p\left({MN} -A,2M,N\right)z^N
&=
\frac {1} {2\pi i}\int_{\mathcal C}\frac {dq} {q^{-A+1}}
\frac {(zq^{M+1};q)_{\infty}} {(zq^{-M};q)_{\infty}}\\
&=
\frac {1} {2\pi i}\int_{\mathcal C}\frac {dq} {q^{-A+1}}
\frac {1} {(zq^{-M};q)_{2M+1}}.
\end{align*}
At this point, we do a partial fraction expansion with respect to~$z$,
to see that
$$
\frac {1} {(zq^{-M};q)_{2M+1}}
=\sum_{j=-M}^{M}\frac {1} {1-zq^j}
\times\frac {1} {(q^{-M-j};q)_{M+j}\,(q;q)_{M-j}}.
$$
Upon substitution in the above integral, this shows that
\begin{align*}
\sum_{N\ge0}p\left({MN} -A,2M,N\right)&z^N\\
&\kern-4pt
=
\sum_{j=-M}^{M}
\frac {1} {2\pi i}\int_{\mathcal C}\frac {dq} {q^{-A+1}}
\frac 1 {1-zq^j}
\times\frac {(-1)^{M+j}q^{\binom {M+j+1}2}} {(q;q)_{M+j}
  \,(q;q)_{M-j}}\\
&\kern-4pt
=\sum_{j=-M}^{M}
\frac {1} {2\pi i}\int_{\mathcal C}\frac {dq} {q^{-A+1}}
\frac 1 {1-zq^j}
\times\frac {(-1)^{M+j}q^{\binom {M+j+1}2}} {(q;q)_{2M}}
\bmatrix 2M\\M-j\endbmatrix_{q}.
\end{align*}
The plan now is to apply the residue theorem to compute the
integral. Clearly, the singularities of the integrand are the
zeros of the denominator. For fixed~$j\ne0$, these are
$\om_{|j|}^\ell z^{-1/j}$, $\ell=0,1,\dots,|j|-1$, where $\om_{|j|}$ is a
primitive $|j|$-th root of unity, and several roots of unity resulting
from the factor $(q;q)_{2M}$.
Since we have chosen the contour~$\mathcal C$ so that $q$ satisfies $|q|<1$,
and since in the residue
theorem we only have to consider singularities inside of the
contour~$\mathcal C$, the roots of unity do not concern us.
Furthermore, by our assumption that $z<1$, for $j>0$ we have
$|\om_{|j|}^\ell z^{-1/j}|>1$, so that the corresponding term has no
singularities inside the contour~$\mathcal C$ and may therefore be ignored.
Finally, the term for $j=0$
has only singularities on the unit circle, and consequently it may also
be ignored.

As a result, the residue theorem yields
\begin{align}
\notag
\sum_{N\ge0}&p\left({MN} -A,2M,N\right)z^N\\
\notag
&\kern-10pt
=
\sum_{j=-M}^{-1}
\sum_{\ell=0}^{|j|-1}
\frac 1
      {-jz\left(\om_{|j|}^\ell z^{-1/j}\right)^{j-1}}
  \times\frac
     {(-1)^{M+j}\left(\om_{|j|}^\ell z^{-1/j}\right)^{A-1+\binom {M+j+1}2}}
     {\left(\om_{|j|}^{\ell} z^{-1/j};
       \om_{|j|}^{\ell} z^{-1/j}\right)_{2M}}
 \bmatrix 2M\\M-j\endbmatrix_{\om_{|j|}^{\ell} z^{-1/j}}\\
\notag
&\kern-10pt
=
\sum_{j=1}^{M}
\sum_{\ell=0}^{j-1}
\frac {\om_{j}^\ell z^{1/j}}
      {j}
  \times\frac
     {(-1)^{M-j}\left(\om_{j}^\ell z^{1/j}\right)^{A-1+\binom {M-j+1}2}}
     {\left(\om_{j}^{\ell} z^{1/j};
       \om_{j}^{\ell} z^{1/j}\right)_{2M}}
 \bmatrix 2M\\M+j\endbmatrix_{\om_{j}^{\ell} z^{1/j}}\\
\notag
&\kern-10pt
=
\sum_{j=1}^{M}
\sum_{\ell=0}^{j-1}
\frac 1 {j}
  \times\frac
     {(-1)^{M-j}\left(\om_{j}^\ell z^{1/j}\right)^{A+\binom {M-j+1}2}}
     {\left(\om_{j}^{\ell} z^{1/j};
       \om_{j}^{\ell} z^{1/j}\right)_{2M-1}\,
       \left(1-(\om_{j}^{\ell} z^{1/j})^{\gcd(M-j,M+j)}\right)}\\
&\kern5cm
\times
\frac {1-(\om_{j}^{\ell} z^{1/j})^{\gcd(M-j,M+j)}}
      {1-({\om_{j}^\ell z^{1/j}})^{2M}}
\bmatrix 2M\\M+j\endbmatrix_{\om_{j}^{\ell} z^{1/j}}.
\label{eq:2}
\end{align}

We now need several auxiliary results.

\begin{lemma} \label{lem:3}
The term $1-(\om_{j}^{\ell} z^{1/j})^{\gcd(M-j,M+j)}$ divides
$1-z^2$ as a polynomial in~$z^{1/j}$.
\end{lemma}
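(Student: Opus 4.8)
The plan is to regard $z^{1/j}$ as a formal indeterminate and reduce the claimed divisibility to a statement about roots of unity. First I would set $t := z^{1/j}$, so that $z = t^j$, whence $1 - z^2 = 1 - t^{2j}$ and $\om_j^\ell z^{1/j} = \om_j^\ell t$. Writing $g := \gcd(M-j, M+j)$, the factor in question becomes
\[
1 - (\om_j^\ell t)^g = 1 - \om_j^{\ell g}\, t^g,
\]
and the assertion of the lemma is precisely that this polynomial divides $1 - t^{2j}$ in $\C[t]$.

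The one arithmetic fact I would isolate is that $g \mid 2j$: indeed $g = \gcd(M-j, M+j)$ divides the difference $(M+j)-(M-j) = 2j$. Set $k := 2j/g$, a positive integer. Next I would reduce to checking roots. The polynomial $1 - \om_j^{\ell g} t^g$ is squarefree, since $\om_j^{-\ell g} \neq 0$ and therefore the equation $t^g = \om_j^{-\ell g}$ has $g$ distinct solutions; a squarefree polynomial divides $1 - t^{2j}$ if and only if each of its roots is a root of $1 - t^{2j}$, i.e.\ a $2j$-th root of unity. So I would take any root $t_0$, characterized by $t_0^g = \om_j^{-\ell g}$, and raise it to the $k$-th power:
\[
t_0^{2j} = (t_0^g)^{k} = \om_j^{-\ell g k} = \om_j^{-2j\ell} = (\om_j^{\,j})^{-2\ell} = 1,
\]
using $gk = 2j$ and $\om_j^{\,j} = 1$. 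Hence $t_0$ is indeed a root of $1 - t^{2j}$, and the divisibility follows.

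I do not expect a genuine obstacle here: the only points requiring care are the passage to the variable $t = z^{1/j}$ (so that ``divides as a polynomial in $z^{1/j}$'' becomes literally a divisibility in $\C[t]$), the elementary divisibility $g \mid 2j$, and the squarefreeness that licenses the root-by-root check. The computation $t_0^{2j} = 1$ is the crux of the matter, and the degenerate case $j = M$ (where $M - j = 0$ and $\gcd(0, 2M) = 2M = 2j$, so $k = 1$) is covered by the very same chain of equalities.
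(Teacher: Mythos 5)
Your proof is correct and follows essentially the same route as the paper: the crux in both is that $\gcd(M-j,M+j)$ divides $(M+j)-(M-j)=2j$ and that $(\om_j^\ell z^{1/j})^{2j}=z^2$. The paper then simply invokes the standard fact that $1-Q^g$ divides $1-Q^{2j}$ when $g\mid 2j$, whereas you verify the same divisibility by a (slightly longer but equivalent) squarefreeness-plus-root check.
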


\begin{proof}
We have $\gcd(M-j,M+j)=\gcd(M-j,2j)$. In particular, the number\break
$\gcd(M-j,M+j)$ divides~$2j$. Since
$(\om_{j}^{\ell} z^{1/j})^{2j}=z^2$, this immediately implies
the assertion of the lemma.
\end{proof}

\begin{lemma} \label{lem:4}
The term $\left(\om_{j}^{\ell} z^{1/j};
       \om_{j}^{\ell} z^{1/j}\right)_{2M-1}$ divides $(z;z)_{2M-1}$
as a polynomial in~$z^{1/j}$.
\end{lemma}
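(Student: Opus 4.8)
```latex
The plan is to show that the product $\left(\om_{j}^{\ell} z^{1/j};\om_{j}^{\ell} z^{1/j}\right)_{2M-1}$ divides $(z;z)_{2M-1}$ by exhibiting an explicit factorization. Writing $\zeta:=\om_{j}^\ell z^{1/j}$, the quantity in question is the $q$-Pochhammer $(\zeta;\zeta)_{2M-1}=\prod_{k=1}^{2M-1}(1-\zeta^k)$, and each factor $1-\zeta^k=1-\om_{j}^{\ell k}z^{k/j}$. I would first observe that $\zeta^{2j}=(\om_j^\ell)^{2j}z^2=z^2$, since $\om_j$ is a primitive $j$-th root of unity and hence $\om_j^{2j}=1$. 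Thus raising $\zeta$ to any multiple of $2j$ produces a power of $z^2$, which is the key periodicity I would exploit.

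\medskip
The main step is to track, for each $k$ in the range $1\le k\le 2M-1$, which cyclotomic-type factor $1-\zeta^k$ contributes. The natural approach is to group the factors of $(z;z)_{2M-1}=\prod_{k=1}^{2M-1}(1-z^k)$ according to the residue of the exponent, or rather to match each factor $1-\zeta^k$ of the left-hand product against a corresponding factor of the right-hand product. Concretely, I would argue that $1-\zeta^k$ divides $1-z^{\,k'}$ for a suitable $k'\le 2M-1$: since $(\om_j^{\ell}z^{1/j})^{k}$ is a root of unity times $z^{k/j}$, and since $z^{k/j}$ raised to an appropriate power lands on an integer power of $z$, each $1-\zeta^k$ is (up to the root-of-unity twist) a cyclotomic factor of $1-z^{j\cdot(\text{something})}$. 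The cleanest formulation is to use the fact, analogous to Lemma~\ref{lem:3}, that $1-\zeta^k$ divides $1-\zeta^{\lcm(\cdot)}$ and that the $2M-1$ exponents $1,2,\dots,2M-1$ on the left biject, after multiplying by $j$ and reducing via $\zeta^{2j}=z^2$, with a subset of the exponents appearing in $(z;z)_{2M-1}$.

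\medskip
I expect the main obstacle to be the bookkeeping of the root-of-unity factors $\om_j^{\ell k}$: one must verify that these are absorbed correctly so that $1-\zeta^k$ is genuinely a \emph{polynomial} divisor of $(z;z)_{2M-1}$ in the variable $z^{1/j}$, not merely an equality of absolute values. To handle this rigorously, I would pass to the variable $w:=z^{1/j}$ and treat everything as a polynomial in $w$. Then $\zeta=\om_j^\ell w$, each factor is $1-\om_j^\ell w^k$, and $(z;z)_{2M-1}=\prod_{k=1}^{2M-1}(1-w^{jk})$. The factor $1-\om_j^\ell w^k$ divides $1-w^{jk}$ because $(\om_j^\ell w^k)^{j}=\om_j^{\ell j}w^{jk}=w^{jk}$ (again using $\om_j^j=1$), so $1-\om_j^\ell w^k$ is one of the linear factors of $1-(\,w^{k}\,)^{j}=1-w^{jk}$. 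Since the factors $1-w^{jk}$ for distinct $k$ in $\{1,\dots,2M-1\}$ are coprime in pairs (their roots are roots of unity scaled by distinct radii when $w$ is viewed formally, or more carefully, the divisibilities can be arranged so that no factor of the right-hand side is used twice), the product of all the $1-\om_j^\ell w^k$ divides $\prod_{k=1}^{2M-1}(1-w^{jk})=(z;z)_{2M-1}$. This establishes the claimed divisibility as polynomials in $w=z^{1/j}$, completing the proof.
```
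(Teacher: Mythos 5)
Your proof is correct and takes essentially the same route as the paper, which simply applies the argument of Lemma~\ref{lem:3} to each factor $1-(\om_j^{\ell}z^{1/j})^{r}$, $r=1,\dots,2M-1$, separately --- exactly your observation that, with $w=z^{1/j}$ and $\zeta=\om_j^{\ell}w$, one has $(\zeta^{r})^{j}=w^{jr}=z^{r}$ and hence $1-\zeta^{r}$ divides $1-z^{r}$ as a polynomial in~$w$. One minor caveat: your assertion that the factors $1-w^{jk}$ are pairwise coprime is false (for instance $1-w^{j}$ divides $1-w^{2j}$), but it is also unnecessary, since $\prod_{r}a_{r}\mid\prod_{r}b_{r}$ already follows from $a_{r}\mid b_{r}$ for each $r$ with each right-hand factor used exactly once, as your own parenthetical remark correctly notes.
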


\begin{proof}
This follows by applying the argument of the proof of the previous
lemma to each factor $1-(\om_{j}^{\ell} z^{1/j})^r$, $r=1,2,\dots,2M-1$,
separately.
\end{proof}

\begin{lemma} \label{lem:5}
For positive integers $a$ and $b$,
the expression $\frac {1-Q^{\gcd(a,b)}} {1-Q^{a+b}}
\left[\smallmatrix a+b\\a\endsmallmatrix\right]_Q$ is a polynomial in~$Q$.
\end{lemma}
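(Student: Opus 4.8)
The plan is to prove the lemma by a cyclotomic valuation argument: I track, for each $d\ge1$, the multiplicity with which the cyclotomic polynomial $\Phi_d(Q)$ occurs in the rational expression
$$
E(Q):=\frac{1-Q^{\gcd(a,b)}}{1-Q^{a+b}}\,\qbin{a+b}{a}_Q .
$$
Since $1-Q^n=\prod_{d\mid n}\Phi_d(Q)$, each factor occurring exactly once, and since the $\Phi_d$ are precisely the irreducible factors over $\Q$, it suffices to show that the $\Phi_d$-adic valuation $v_d(E)$ is nonnegative for every $d$. That forces the denominator to cancel completely and makes $E$ a genuine polynomial (indeed a product of cyclotomics, hence in $\Z[Q]$).

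First I would record the three relevant valuations. The multiplicity of $\Phi_d$ in $(Q;Q)_n=\prod_{k=1}^{n}(1-Q^k)$ is $\fl{n/d}$, so its multiplicity in the $q$-binomial coefficient is $v_d:=\fl{(a+b)/d}-\fl{a/d}-\fl{b/d}$, a quantity that is well known to lie in $\{0,1\}$ (it equals $\fl{\{a/d\}+\{b/d\}}$). The factor $1-Q^{\gcd(a,b)}$ contributes $+1$ exactly when $d\mid\gcd(a,b)$, and $1-Q^{a+b}$ contributes $-1$ exactly when $d\mid(a+b)$. Hence $v_d(E)$ equals $v_d$ plus the difference $\delta_d$ of these two indicators. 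Because $d\mid\gcd(a,b)$ forces $d\mid a$ and $d\mid b$, hence $d\mid(a+b)$, the difference $\delta_d$ can only take the values $0$ and $-1$: it is $-1$ precisely when $d\mid(a+b)$ but $d\nmid\gcd(a,b)$, and $0$ otherwise. Thus $v_d(E)$ can be negative only in that single critical case, where I must show $v_d=1$ to cancel the $-1$.

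The crux is therefore the floor-function bookkeeping in that critical case, and it is short. Writing $a=d\fl{a/d}+r_a$ and $b=d\fl{b/d}+r_b$ with $0\le r_a,r_b<d$, the hypothesis $d\mid(a+b)$ gives $d\mid(r_a+r_b)$, and since $0\le r_a+r_b<2d$ we conclude $r_a+r_b\in\{0,d\}$. The value $0$ would mean $r_a=r_b=0$, i.e.\ $d\mid\gcd(a,b)$, which is excluded; hence $r_a+r_b=d$ and $v_d=\fl{(r_a+r_b)/d}=1$, exactly offsetting $\delta_d=-1$. This yields $v_d(E)\ge0$ for all $d$ and completes the proof. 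I expect this floor computation to be the only genuinely substantive step; everything else is formal manipulation of cyclotomic multiplicities.

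As a cross-check, and as an alternative route that avoids valuations altogether, one can use a $q$-Bezout identity. The identities $\frac{1-Q^{a}}{1-Q^{a+b}}\qbin{a+b}{a}_Q=\qbin{a+b-1}{a-1}_Q$ and $\frac{1-Q^{b}}{1-Q^{a+b}}\qbin{a+b}{a}_Q=\qbin{a+b-1}{b-1}_Q$ show that multiplying $E$ by $1-Q^{a}$ or by $1-Q^{b}$ (in place of $1-Q^{\gcd(a,b)}$) already yields a polynomial. Since $1-Q^{\gcd(a,b)}$ is the greatest common divisor of $1-Q^{a}$ and $1-Q^{b}$ in the principal ideal domain $\Q[Q]$, there are polynomials $P,R$ with $1-Q^{\gcd(a,b)}=P(Q)(1-Q^{a})+R(Q)(1-Q^{b})$, whence $E=P(Q)\,\qbin{a+b-1}{a-1}_Q+R(Q)\,\qbin{a+b-1}{b-1}_Q$ is manifestly a polynomial. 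This is the exact $q$-analogue of the classical integrality of $\frac{\gcd(a,b)}{a+b}\binom{a+b}{a}$, which I would mention for context.
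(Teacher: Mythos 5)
Your cyclotomic-valuation argument is precisely the approach the paper takes: its proof of Lemma~\ref{lem:5} is the one-line remark that the claim follows ``by counting cyclotomic polynomials as factors in the numerator and denominator,'' with details deferred to a cited lemma, and you have written those details out correctly, including the one substantive point that $\lfloor (a+b)/d\rfloor-\lfloor a/d\rfloor-\lfloor b/d\rfloor=1$ exactly when $d\mid(a+b)$ but $d\nmid\gcd(a,b)$. Your supplementary $q$-Bezout argument is also correct and gives a nice self-contained alternative, though unlike the valuation route (and the paper's cited reference) it does not yield the additional fact that the resulting polynomial has nonnegative coefficients.
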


\begin{proof}
This is easy to show by counting cyclotomic polynomials as
factors in the numerator and denominator of the expression,
See e.g.\ \cite[Lemma~D.1]{KrStAA} (where it is proved in
addition that all coefficients are nonnegative).
\end{proof}

Now everything is in place for the proof of Theorem~\ref{thm:even}.

\begin{proof}[Proof of Theorem \ref{thm:even}]
By Lemmas~\ref{lem:3} and~\ref{lem:4} and Lemma~\ref{lem:5} with
$a=M+j$ and $b=M-j$, 
the denominator of the summand on the right-hand side
of~\eqref{eq:2} is
$$
\left(\om_{j}^{\ell} z^{1/j};
       \om_{j}^{\ell} z^{1/j}\right)_{2M-1}\,
       \left(1-(\om_{j}^{\ell} z^{1/j})^{\gcd(M-j,M+j)}\right),
$$
and it divides $(z;z)_{2M-1}\,(1-z^2)$ as a polynomial in~$z^{1/j}$.
On the other hand, as we have argued earlier, we know a priori that
our generating function of interest --- the left-hand side of~\eqref{eq:2} --- is a rational function in~$z$ (sic!), hence the right-hand side of~\eqref{eq:2} is as well. 
The conclusion is that there exist polynomials
$S(\,.\,)$ and $T(\,.\,)$ in
$\mathbb C[z]$, and a polynomial
$R(\,.\,)$ in $\mathbb C[z,z^{1/2},z^{1/3},\dots,z^{1/M}]$
such that our generating function can be written in the two forms 
\begin{equation} \label{eq:3} 
\frac {R(z,z^{1/2},\dots,z^{1/M})}
      {(z;z)_{2M-1}\,(1-z^2)}
=\frac {S(z)} {T(z)}.
\end{equation}
Rearranging terms, we infer
$$
{R(z,z^{1/2},\dots,z^{1/M})}
=\frac {S(z)\,{(z;z)_{2M-1}\,(1-z^2)}} {T(z)}.
$$
From the outset, this is an identity between {\it formal power series in $z,z^{1/2},z^{1/3},\dots,z^{1/M}$}.
However, on the left-hand side we find a {\it polynomial\/} in 
$z,z^{1/2},z^{1/3},\dots,z^{1/M}$, and on the right-hand side we find a {\it formal power series in~$z$}, Hence, 
${R(z,z^{1/2},\dots,z^{1/M})}$ 
must actually be a polynomial in~$z$.
By the left-hand side of~\eqref{eq:3}, this establishes the assertion
of Theorem~\ref{thm:even} about the denominator of the generating
function.

In order to establish also the assertion \eqref{eq:even} about the
numerator, we must look at the expression~\eqref{eq:2} in detail.
By comparing with the formula in Lemma~\ref{lem:s}, we realize that
it is a $j$-dissection which is computed by the sum over~$\ell$
in~\eqref{eq:2}. This then leads to the expression
in~\eqref{eq:even} for the numerator $\text{Num}_e(M,r)$.

\medskip
The proof of Theorem~\ref{thm:even} is now complete.
\end{proof}

\medskip
Now let $m$ be odd, say $m=2M-1$. In this case, we have to consider
\begin{multline*}
p\left(\left\lfloor{\frac {(2M-1)N}2}\right\rfloor -A,2M-1,N\right)\\
=\begin{cases}
\coef{q^{(2M-1)\frac {N} {2}-A}}
\bmatrix 2M-1+N\\2M-1\endbmatrix_q,&\text{if $N$ is even},\\
\coef{q^{(2M-1)\frac {N} {2}-\frac {1} {2}-A}}
\bmatrix 2M-1+N\\2M-1\endbmatrix_q,&\text{if $N$ is odd}.
\end{cases}
\end{multline*}
We compute the generating function 
$$
\sum_{N\ge0}p\left(\left\lfloor\frac {(2M-1)N} {2}\right\rfloor-A,2M-1,N\right)z^N
$$
separately for even~$N$ and for odd~$N$.
Again, using arguments very similar to those in the case where $m$ is even,
one can show that in both cases one obtains rational functions with
the denominators having exclusively roots of unity as zeros.

The even part is
\begin{align*}
\sum_{k\ge0}p\left((2M-1)k-A,2M-1,2k\right)z^{2k}
&=
\sum_{k\ge0}z^{2k}\coef{q^{(2M-1)k-A}}
\bmatrix 2M-1+2k\\2M-1\endbmatrix_q\\
&=
\sum_{k\ge0}z^{2k}\frac {1} {2\pi i}\int_{\mathcal C}
\frac {dq} {q^{(2M-1)k-A+1}}
\frac {(q^{2M};q)_{2k}} {(q;q)_{2k}}.
\end{align*}
The sum over $k$ can again be evaluated by means of the $q$-binomial
theorem in~\eqref{eq:qbinth} (more precisely: by the bisection of the
$q$-binomial theorem). We get
\begin{align*}
\sum_{k\ge0}&p\left((2M-1)k-A,2M-1,2k\right)z^{2k}\\
&=
\frac {1} {2\pi i}\int_{\mathcal C}
\frac {dq} {2q^{-A+1}}
\left(
\frac {(zq^{M+\frac {1} {2}};q)_\infty} {(zq^{-M+\frac {1} {2}};q)_\infty}
+
\frac {(-zq^{M+\frac {1} {2}};q)_\infty} {(-zq^{-M+\frac {1} {2}};q)_\infty}
\right)\\
&=
\frac {1} {2\pi i}\int_{\mathcal C}
\frac {dq} {2q^{-A+1}}
\left(
\frac {1} {(zq^{-M+\frac {1} {2}};q)_{2M}}
+
\frac {1} {(-zq^{-M+\frac {1} {2}};q)_{2M}}
\right).
\end{align*}
Similarly, we have
\begin{align*}
\sum_{k\ge0}p&\left(\left\lfloor\frac {(2M-1)(2k+1)}2\right\rfloor-A,
2M-1,2k+1\right)z^{2k+1}\\
&=
\sum_{k\ge0}z^{2k+1}\coef{q^{\frac {1} {2}(2M-1)(2k+1)-\frac {1} {2}-A}}
\bmatrix 2M-1+2k+1\\2M-1\endbmatrix_q\\
&=
\sum_{k\ge0}z^{2k+1}\frac {1} {2\pi i}\int_{\mathcal C}
\frac {dq} {q^{\frac {1} {2}(2M-1)(2k+1)-\frac {1} {2}-A+1}}
\frac {(q^{2M};q)_{2k+1}} {(q;q)_{2k+1}}.
\end{align*}
By the $q$-binomial theorem, we obtain
\begin{align*}
\sum_{k\ge0}p&\left(\left\lfloor\frac {(2M-1)(2k+1)}2\right\rfloor-A,
2M-1,2k+1\right)z^{2k+1}\\
&=
\frac {1} {2\pi i}\int_{\mathcal C}
\frac {dq} {2q^{-A+\frac {1} {2}}}
\left(
\frac {(zq^{M+\frac {1} {2}};q)_\infty} {(zq^{-M+\frac {1} {2}};q)_\infty}
-
\frac {(-zq^{M+\frac {1} {2}};q)_\infty} {(-zq^{-M+\frac {1} {2}};q)_\infty}
\right)\\
&=
\frac {1} {2\pi i}\int_{\mathcal C}
\frac {dq} {2q^{-A+\frac {1} {2}}}
\left(
\frac {1} {(zq^{-M+\frac {1} {2}};q)_{2M}}
-
\frac {1} {(-zq^{-M+\frac {1} {2}};q)_{2M}}
\right).
\end{align*}
Putting both together, we get
\begin{multline*}
\sum_{k\ge0}p\left(\left\lfloor\frac {(2M-1)k}2\right\rfloor-A,
2M-1,k\right)z^{k}\\
=
\frac {1} {2\pi i}\int_{\mathcal C}
\frac {dq} {2q^{-A+1}}
\left(
\frac {1+q^{\frac12}} {(zq^{-M+\frac {1} {2}};q)_{2M}}
+
\frac {1-q^{\frac12}} {(-zq^{-M+\frac {1} {2}};q)_{2M}}
\right).
\end{multline*}
Next we do partial fraction decomposition with the denominators,
$$
\frac {1} {(\pm zq^{-M+\frac {1} {2}};q)_{2M}}
=\sum_{j=-M}^{M-1}\frac {1} {1\mp zq^{j+\frac {1} {2}}}
\times\frac {1} {(q^{-M-j};q)_{M+j}\,(q;q)_{M-j-1}}.
$$
Upon substitution in the above integral, this shows that
\begin{align*}
\sum_{k\ge0}&p\left(\left\lfloor\frac {(2M-1)k}2\right\rfloor-A,
2M-1,k\right)z^{k}\\
&
=
\sum_{j=-M}^{M-1}
\frac {1} {2\pi i}\int_{\mathcal C}\frac {dq} {2q^{-A+1}}
\left(\frac {1+q^{\frac {1} {2}}} {1-zq^{j+\frac {1} {2}}}
+\frac {1-q^{\frac {1} {2}}} {1+zq^{j+\frac {1} {2}}}\right)
\times\frac {(-1)^{M+j}q^{\binom {M+j+1}2}} {(q;q)_{M+j}
  \,(q;q)_{M-j-1}}\\
&
=\sum_{j=-M}^{M-1}
\frac {1} {2\pi i}\int_{\mathcal C}\frac {dq} {q^{-A+1}}
\frac {1+zq^{j+1}} {1-z^2q^{2j+1}}
\times\frac {(-1)^{M+j}q^{\binom {M+j+1}2}} {(q;q)_{2M-1}}
\bmatrix 2M-1\\M-j-1\endbmatrix_{q}.
\end{align*}
The residue theorem then yields
{\allowdisplaybreaks%
\begin{align}
\notag
\sum_{k\ge0}&p\left(\left\lfloor\frac {(2M-1)k}2\right\rfloor-A,
2M-1,k\right)z^{k}\\
&
=\sum_{j=-M}^{-1}\sum_{\ell=0} ^{|2j+1|-1}
\left(\om_{|2j+1|}^\ell z^{-2/(2j+1)}\right)^{A-1}
\frac {1+z\left(\om_{|2j+1|}^\ell z^{-2/(2j+1)}\right)^{j+1}}
      {-(2j+1)z^2\left(\om_{|2j+1|}^\ell z^{-2/(2j+1)}\right)^{2j}}\\
\notag
&\kern1cm
      \times\frac {(-1)^{M+j}\left(\om_{|2j+1|}^\ell z^{-2/(2j+1)}\right)
        ^{\binom {M+j+1}2}} {(\left(\om_{|2j+1|}^\ell
        z^{-2/(2j+1)}\right);
        \left(\om_{|2j+1|}^\ell z^{-2/(2j+1)}\right))_{2M-1}}
\bmatrix 2M-1\\M-j-1\endbmatrix_{\om_{|2j+1|}^\ell z^{-2/(2j+1)}}\\
\notag
&
=\sum_{j=1}^{M}\sum_{\ell=0} ^{2j-2}
\left(\om_{2j-1}^\ell z^{2/(2j-1)}\right)^{A-1}
\frac {1+z\left(\om_{2j-1}^\ell z^{2/(2j-1)}\right)^{-j+1}}
      {(2j-1)z^2\left(\om_{2j-1}^\ell z^{2/(2j-1)}\right)^{-2j}}\\
\notag
&\kern1cm
      \times\frac {(-1)^{M-j}\left(\om_{2j-1}^\ell z^{2/(2j-1)}\right)
        ^{\binom {M-j+1}2}} {\left(\om_{2j-1}^\ell
        z^{2/(2j-1)};
        \om_{2j-1}^\ell z^{2/(2j-1)}\right)_{2M-1}}
\bmatrix 2M-1\\M-j\endbmatrix_{\om_{2j-1}^\ell z^{2/(2j-1)}}\\
\notag
&
=\sum_{j=1}^{M}\sum_{\ell=0} ^{2j-2}
\left(\om_{2j-1}^\ell z^{2/(2j-1)}\right)^A
\frac {1}
      {2j-1}\\
\notag
&\kern1cm
  \times\frac {(-1)^{M-j}\left(\om_{2j-1}^\ell z^{2/(2j-1)}\right)
        ^{\binom {M-j+1}2}}
    {\left(1-\left(-\om_{4j-2}\right)^{\ell}
        z^{1/(2j-1)}\right)\,\left(\left(\om_{2j-1}^\ell
        z^{2/(2j-1)}\right)^2;
        \om_{2j-1}^\ell z^{2/(2j-1)}\right)_{2M-3}\,(1-z^2)}\\
&\kern1cm
\times
\frac {1-z^2} {1-\left(\om_{2j-1}^\ell z^{2/(2j-1)}\right)^{2M-1}}
\bmatrix 2M-1\\M-j\endbmatrix_{\om_{2j-1}^\ell z^{2/(2j-1)}}.
\label{eq:4}
\end{align}}%

Again, we need several auxiliary results.

\begin{lemma} \label{lem:6}
The term $1-(-\om_{4j-2})^{\ell} z^{1/(2j-1)}$ divides
$1-z$ as a polynomial in~$z^{1/(2j-1)}$.
\end{lemma}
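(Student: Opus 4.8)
The plan is to reduce the claim to a simple root criterion after the substitution $w:=z^{1/(2j-1)}$, exactly in the spirit of the proofs of Lemmas~\ref{lem:3} and~\ref{lem:4}. Writing $1-z=1-w^{2j-1}$ and setting $\zeta:=(-\om_{4j-2})^{\ell}$, the assertion becomes the statement that the linear polynomial $1-\zeta w$ divides $1-w^{2j-1}$ in $\C[w]$. Since $1-\zeta w$ is, up to a nonzero constant, equal to $w-\zeta^{-1}$, this divisibility holds if and only if $w=\zeta^{-1}$ is a zero of $1-w^{2j-1}$, that is, if and only if $\zeta^{2j-1}=1$.

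First I would verify the exponent identity $\zeta^{2j-1}=1$. Because $\om_{4j-2}$ is a primitive $(4j-2)$-th root of unity and $2j-1=\tfrac12(4j-2)$, raising it to the power $2j-1$ lands on the unique primitive square root of unity, so $\om_{4j-2}^{2j-1}=-1$. Combining this with $(-1)^{2j-1}=-1$, where I use that $2j-1$ is odd, gives $(-\om_{4j-2})^{2j-1}=(-1)^{2j-1}\,\om_{4j-2}^{2j-1}=(-1)(-1)=1$. Raising to the $\ell$-th power then yields $\zeta^{2j-1}=\big((-\om_{4j-2})^{2j-1}\big)^{\ell}=1$, as needed; equivalently, $-\om_{4j-2}$ is itself a $(2j-1)$-th root of unity, and hence so is every power $\zeta=(-\om_{4j-2})^{\ell}$.

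With $\zeta^{2j-1}=1$ in hand, the root criterion immediately gives that $1-\zeta w$ divides $1-w^{2j-1}$, and undoing the substitution $w=z^{1/(2j-1)}$ recovers the assertion that $1-(-\om_{4j-2})^{\ell}z^{1/(2j-1)}$ divides $1-z$ as a polynomial in $z^{1/(2j-1)}$.

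I expect no serious obstacle here; the only point requiring care is the sign bookkeeping in the exponent computation, specifically the two facts that $\om_{4j-2}^{2j-1}=-1$ and that the overall sign $(-1)^{2j-1}$ cancels it precisely because $2j-1$ is odd. This is the direct analogue of the key step ``$(\om_{j}^{\ell}z^{1/j})^{2j}=z^2$'' used in Lemma~\ref{lem:3}, and the remaining divisibility argument is the same root-of-unity bookkeeping as in Lemma~\ref{lem:4}.
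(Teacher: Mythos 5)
Your proof is correct and follows essentially the same route as the paper: the paper rewrites $-\om_{4j-2}$ directly as $\om_{2j-1}^{j}$ (so the factor is visibly one of the linear factors of $1-z=1-(z^{1/(2j-1)})^{2j-1}$), while you verify the equivalent fact $(-\om_{4j-2})^{2j-1}=1$ and invoke the root criterion. The sign computation $\om_{4j-2}^{2j-1}=-1$ and $(-1)^{2j-1}=-1$ is exactly the content of the paper's identity, so there is nothing further to add.
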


\begin{proof}
We have
\begin{align*}
1-(-\om_{4j-2})^{\ell} z^{1/(2j-1)}
&=1-\left(e^{2\pi i(2j-1)/(4j-2)}e^{2\pi i/(4j-2)}\right)^{\ell} z^{1/(2j-1)}\\
&=1-\om_{2j-1}^{j\ell} z^{1/(2j-1)}.
\end{align*}
This is visibly a divisor of $1-z$.
\end{proof}

\begin{lemma} \label{lem:7}
The term $1-(\om_{2j-1}^{\ell} z^{2/(2j-1)})^{\gcd(M+j-1,M-j)}$ divides
$1-z^2$ as a polynomial in~$z^{2/(2j-1)}$.
\end{lemma}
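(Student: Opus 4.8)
The plan is to follow the proof of Lemma~\ref{lem:3} essentially verbatim, reducing the claimed polynomial divisibility to a divisibility among exponents via the Euclidean algorithm. First I would abbreviate $w:=\om_{2j-1}^{\ell} z^{2/(2j-1)}$, so that the quantity to be analysed is $1-w^{g}$ with $g:=\gcd(M+j-1,M-j)$, and I would record the basic relation $w^{2j-1}=\om_{2j-1}^{\ell(2j-1)}z^2=z^2$, which holds because $\om_{2j-1}$ is a $(2j-1)$-st root of unity and hence $\om_{2j-1}^{2j-1}=1$.

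The key arithmetic step is to evaluate the greatest common divisor. Since $(M+j-1)-(M-j)=2j-1$, one application of the Euclidean algorithm gives $\gcd(M+j-1,M-j)=\gcd(M-j,2j-1)$; in particular, $g$ divides $2j-1$. Writing $2j-1=g\,k$ for a positive integer $k$, the elementary factorization $1-w^{2j-1}=(1-w^{g})\bigl(1+w^{g}+w^{2g}+\cdots+w^{(k-1)g}\bigr)$ shows that $1-w^{g}$ divides $1-w^{2j-1}$ as a polynomial in~$w$, and therefore as a polynomial in~$z^{2/(2j-1)}$. Since $1-w^{2j-1}=1-z^2$, this is precisely the assertion of the lemma.

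The step requiring the most care is purely a matter of bookkeeping rather than any genuine obstacle: one must carry out the gcd computation correctly and apply the substitution $w^{2j-1}=z^2$ consistently, so that the divisibility obtained is indeed the one stated \emph{as a polynomial in}~$z^{2/(2j-1)}$. As in Lemma~\ref{lem:3}, no real difficulty arises, and the companion relation $w^{2j-1}=z^2$ does all the work once the gcd is seen to divide $2j-1$.
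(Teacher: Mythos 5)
Your proposal is correct and follows the same route as the paper: reduce $\gcd(M+j-1,M-j)$ via the Euclidean algorithm to conclude it divides $2j-1$, then use $(\om_{2j-1}^{\ell} z^{2/(2j-1)})^{2j-1}=z^2$ to get the divisibility of $1-z^2$. The only cosmetic difference is that the paper writes the gcd as $\gcd(M+j-1,2j-1)$ where you write $\gcd(M-j,2j-1)$; both are equal and both yield the needed conclusion.
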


\begin{proof}
We have $\gcd(M+j-1,M-j)=\gcd(M+j-1,2j-1)$. In particular, the number
$\gcd(M+j-1,M-j)$ divides~$2j-1$. Since
$(\om_{2j-1}^{\ell} z^{2/(2j-1)})^{2j-1}=z^2$, this immediately implies
the assertion of the lemma.
\end{proof}

\begin{lemma} \label{lem:8}
The term $\left(\left(\om_{2j-1}^\ell z^{2/(2j-1)}\right)^2;
\om_{2j-1}^\ell z^{2/(2j-1)}\right)_{2M-3}$
divides $(z^4;z^2)_{2M-3}$
as a polynomial in~$z^{2/(2j-1)}$.
\end{lemma}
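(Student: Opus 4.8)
The plan is to mirror the structure of the proof of Lemma~\ref{lem:4}: reduce the divisibility of the two $q$-Pochhammer products to a factor-by-factor comparison in which each individual factor is handled exactly as in the single-factor argument of Lemma~\ref{lem:7}. Throughout I would abbreviate $w:=\om_{2j-1}^\ell z^{2/(2j-1)}$ and regard everything as living in the polynomial ring in the single variable $u:=z^{2/(2j-1)}$; since $w=\om_{2j-1}^\ell u$ is a monomial in~$u$ with a root-of-unity coefficient, every expression below is genuinely a polynomial in~$u$.

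First I would record the basic relation
$$
w^{2j-1}=\left(\om_{2j-1}^\ell z^{2/(2j-1)}\right)^{2j-1}
=\left(\om_{2j-1}^{2j-1}\right)^\ell z^2=z^2,
$$
using $\om_{2j-1}^{2j-1}=1$. Expanding the two Pochhammer symbols according to Definition~\ref{GP}, the term under consideration is
$$
\left(w^2;w\right)_{2M-3}=\prod_{r=2}^{2M-2}\left(1-w^r\right),
$$
while the target factors as
$$
\left(z^4;z^2\right)_{2M-3}=\prod_{r=2}^{2M-2}\left(1-z^{2r}\right),
$$
so that both products run over the common index set $\{2,3,\dots,2M-2\}$.

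Next, for each fixed $r$ in this range, I would show that $1-w^r$ divides $1-z^{2r}$ as a polynomial in~$u$. This is immediate from the elementary fact that $1-X$ divides $1-X^{2j-1}$ for any indeterminate~$X$, applied with $X=w^r$: indeed
$$
\left(w^r\right)^{2j-1}=\left(w^{2j-1}\right)^r=\left(z^2\right)^r=z^{2r},
$$
whence $1-w^r\mid 1-(w^r)^{2j-1}=1-z^{2r}$. This is precisely the one-factor argument underlying Lemma~\ref{lem:7}, now carried out with the exponent~$r$ in place of $\gcd(M+j-1,M-j)$.

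Finally, since matched divisibilities multiply --- if $a_r\mid b_r$ for each~$r$ then $\prod_r a_r\mid\prod_r b_r$ --- I would conclude that $\prod_{r=2}^{2M-2}(1-w^r)$ divides $\prod_{r=2}^{2M-2}(1-z^{2r})$, which is the assertion. I do not expect any real obstacle here: unlike the more delicate Lemma~\ref{lem:5}, the factor pairing is exact and term-by-term, so no cyclotomic-polynomial multiplicity count is needed. The only point requiring a moment's care is the bookkeeping verifying that both Pochhammer symbols range over the same index set, after which the conclusion is forced.
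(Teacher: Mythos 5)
Your proof is correct and follows essentially the same route as the paper: the paper likewise observes that $(\om_{2j-1}^{\ell}z^{2/(2j-1)})^{2j-1}=z^2$ and applies this factor-by-factor to $1-(\om_{2j-1}^{\ell}z^{2/(2j-1)})^{r}$ for $r=2,3,\dots,2M-2$, exactly as in your term-by-term pairing. You have merely written out explicitly the index bookkeeping and the elementary divisibility $1-X\mid 1-X^{2j-1}$ that the paper leaves implicit by referring back to the argument of Lemma~\ref{lem:3}.
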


\begin{proof}
This follows by applying the argument of the proof of 
Lemma~\ref{lem:3} to each factor
$1-(\om_{2j-1}^{\ell} z^{2/(2j-1)})^r$, $r=2,3,\dots,2M-2$,
separately.
\end{proof}

We are ready for the proof of Theorem~\ref{thm:odd}.

\begin{proof}[Proof of Theorem \ref{thm:odd}]
By Lemmas~\ref{lem:6}, \ref{lem:7}, \ref{lem:8}, and Lemma~\ref{lem:5} with $a=M+j-1$ and $b=M-j$,
the denominator of the summand on the right-hand side
of~\eqref{eq:4} is
$$
\left(1-\left(-\om_{4j-2}\right)^{\ell}
        z^{1/(2j-1)}\right)\,\left(\left(\om_{2j-1}^\ell
        z^{2/(2j-1)}\right)^2;
        \om_{2j-1}^\ell z^{2/(2j-1)}\right)_{2M-3}\,(1-z^2),
$$
and it divides $(1-z)\,(z^2;z^2)_{2M-2}$ as a polynomial in~$z^{1/(2j-1)}$.
Arguments analogous to the ones at the end of the proof 
of Theorem~\ref{thm:odd} then complete this proof.
Here, instead of $j$-dissections, we deal with $(2j-1)$-dissections.
One little detail is that one must observe that $-\om_{4j-2}$
is a primitive $(2j-1)$-th (!) root of unity, and that
$(-\om_{4j-2})^2=\om_{2j-1}$.
\end{proof}

\section{Acknowledgments}

The authors are grateful for the previous work of Arturo Martinez and
Angelica Castillo in computing the 144 constituents for $p(n,4,N)$.  
The authors would like to thank Dennis Eichhorn for his help revising
recent drafts of this work. We are furthermore indebted to
Kathrin Bringmann and Nicolas Smoot for the organization of a Section
on Number Theory within the program of the \"OMG--DMV-Congress 2025 at the
Johannes Kepler Universit\"at Linz, during which the second author
presented a preliminary report, which raised the interest of
the first author and led him to enter the project.

\setlength{\bibsep}{0pt} 
\renewcommand{\bibfont}{\small} 
\bibliographystyle{abbrv}
\bibliography{ref}
%\printbibliography

\appendix
\section*{Appendix}
\label{A5}

\setcounter{equation}{0}%
\global\def\thetheorem{\mbox{A.\arabic{theorem}}}
\global\def\theequation{\mbox{A.\arabic{equation}}}

In this appendix, we display the perpendicular generating functions
$$\sum_{N=0}^{\infty} p\left(\left\lfloor \frac{mN}{2}
\right\rfloor-A,m,N\right)z^{N}$$
for $m=5$ and $m=6$.

\begin{proposition}\label{GF5}
The partition generating function
$\sum_{N=0}^{\infty} p\left(\left\lfloor \frac{5N}{2}
  \right\rfloor-A,5,N\right)z^{N}$ has $15$~cases:
\begin{enumerate}
\item   \label{A5.0} $\displaystyle \sum_{N=0}^{\infty} p\left(\left\lfloor \frac{5N}{2} \right\rfloor-15a,5,N\right)z^{N}  =\newline z^{6 a}(-z^{4 a+1}-3 z^{4 a+2}-2 z^{4 a+3}-8 z^{4 a+4}-5z^{4 a+5}-12 z^{4 a+6}-6 z^{4 a+7}-13 z^{4 a+8}-7 z^{4 a+9}-10 z^{4 a+10}-4z^{4 a+11}-6 z^{4a+12}-2 z^{4 a+13}-2 z^{4 a+14}+z^{24 a+6}+z^{24 a+10}+z^{15}+2 z^{14}+3z^{13}+9 z^{12}+7 z^{11}+14 z^{10}+11 z^9+19 z^8+12 z^7+15 z^6+10 z^5+11 z^4+5   z^3+4 z^2+z+1)\times\frac{1}{(1-z) \left(1-z^2\right) \left(1-z^4\right)\left(1-z^6\right) \left(1-z^8\right)}.$

\item   \label{A5.1}$\displaystyle \sum_{N=0}^{\infty} p\left(\left\lfloor \frac{5N}{2} \right\rfloor-(15a+1),5,N\right)z^{N}  =\newline z^{6 a+1} (-2 z^{4 a+1}-2 z^{4 a+2}-6 z^{4 a+3}-4 z^{4  a+4}-10 z^{4 a+5}-7 z^{4 a+6}-13 z^{4 a+7}-6 z^{4 a+8}-12 z^{4 a+9}-5 z^{4a+10}-8 z^{4 a+11}-2 z^{4a+12}-3 z^{4 a+13}-z^{4 a+14}+z^{24 a+7}+z^{24 a+11}+z^{14}+4z^{13}+4 z^{12}+9 z^{11}+8 z^{10}+16 z^9+12 z^8+17 z^7+12 z^6+16 z^5+8 z^4+9 z^3+4 z^2+4 z+1)\times\frac{1}{(1-z) \left(1-z^2\right) \left(1-z^4\right)\left(1-z^6\right) \left(1-z^8\right)}.$

\item  $\displaystyle \sum_{N=0}^{\infty} p\left(\left\lfloor \frac{5N}{2} \right\rfloor-(15a+2),5,N\right)z^{N}  =\newline z^{6 a+1} (-z^{4 a+1}-z^{4 a+2}-4 z^{4 a+3}-4 z^{4 a+4}-9  z^{4 a+5}-5 z^{4 a+6}-13 z^{4 a+7}-7 z^{4 a+8}-13 z^{4 a+9}-5 z^{4 a+10}-9 z^{4  a+11}-4 z^{4a+12}-4 z^{4 a+13}-z^{4 a+14}-z^{4 a+15}+z^{24 a+9}+z^{24 a+13}+z^{15}+z^{14}+4  z^{13}+5 z^{12}+11 z^{11}+10 z^{10}+15 z^9+12 z^8+19 z^7+11 z^6+14 z^5+7 z^4+9 z^3+3 z^2+2 z+1)\times\frac{1}{(1-z) \left(1-z^2\right) \left(1-z^4\right)   \left(1-z^6\right) \left(1-z^8\right)}.$
  
\item  $\displaystyle \sum_{N=0}^{\infty} p\left(\left\lfloor \frac{5N}{2} \right\rfloor-(15a+3),5,N\right)z^{N}  =\newline z^{6 a+2} (-z^{4 a+1}-3 z^{4 a+2}-2 z^{4 a+3}-8 z^{4a+4}-5 z^{4 a+5}-12 z^{4 a+6}-6 z^{4 a+7}-13 z^{4 a+8}-7 z^{4 a+9}-10 z^{4a+10}-4 z^{4 a+11}-6 z^{4a+12}-2 z^{4 a+13}-2 z^{4 a+14}+z^{24 a+10}+z^{24 a+14}+z^{14}+2z^{13}+6 z^{12}+6 z^{11}+11 z^{10}+10 z^9+18 z^8+12 z^7+17 z^6+10 z^5+14 z^4+7z^3+6 z^2+3 z+2)\times\frac{1}{(1-z) \left(1-z^2\right) \left(1-z^4\right)   \left(1-z^6\right) \left(1-z^8\right)}.$
   
\item   $\displaystyle \sum_{N=0}^{\infty} p\left(\left\lfloor \frac{5N}{2} \right\rfloor-(15a+4),5,N\right)z^{N}  =\newline z^{6 a+2}(-2 z^{4 a+2}-2 z^{4 a+3}-6 z^{4 a+4}-4 z^{4   a+5}-10 z^{4 a+6}-7 z^{4 a+7}-13 z^{4 a+8}-6 z^{4 a+9}-12 z^{4 a+10}-5 z^{4   a+11}-8 z^{4a+12}-2 z^{4 a+13}-3 z^{4 a+14}-z^{4 a+15}+z^{24 a+12}+z^{24 a+16}+2 z^{14}+3   z^{13}+6 z^{12}+7 z^{11}+14 z^{10}+10 z^9+17 z^8+12 z^7+18 z^6+10 z^5+11 z^4+6   z^3+6 z^2+2 z+1)\times\frac{1}{(1-z) \left(1-z^2\right) \left(1-z^4\right)   \left(1-z^6\right) \left(1-z^8\right)}.$
   
\item $\displaystyle \sum_{N=0}^{\infty} p\left(\left\lfloor \frac{5N}{2} \right\rfloor-(15a+5),5,N\right)z^{N}  =\newline z^{6 a+2} (-z^{4a+16}-z^{4 a+2}-z^{4 a+3}-4 z^{4 a+4}-4   z^{4 a+5}-9 z^{4 a+6}-5 z^{4 a+7}-13 z^{4 a+8}-7 z^{4 a+9}-13 z^{4 a+10}-5 z^{4   a+11}-9 z^{4a+12}-4 z^{4 a+13}-4 z^{4 a+14}-z^{4 a+15}+z^{24 a+14}+z^{24 a+18}+z^{15}+2   z^{14}+3 z^{13}+9 z^{12}+7 z^{11}+14 z^{10}+11 z^9+19 z^8+12 z^7+15 z^6+10   z^5+11 z^4+5 z^3+4 z^2+z+1)\times\frac{1}{(1-z) \left(1-z^2\right) \left(1-z^4\right)   \left(1-z^6\right) \left(1-z^8\right)}.$

\item  $\displaystyle \sum_{N=0}^{\infty} p\left(\left\lfloor \frac{5N}{2} \right\rfloor-(15a+6),5,N\right)z^{N}  =\newline z^{6 a+3} (-z^{4 a+2}-3 z^{4 a+3}-2 z^{4 a+4}-8 z^{4a+5}-5 z^{4 a+6}-12 z^{4 a+7}-6 z^{4 a+8}-13 z^{4 a+9}-7 z^{4 a+10}-10 z^{4  a+11}-4 z^{4a+12}-6 z^{4 a+13}-2 z^{4 a+14}-2 z^{4 a+15}+z^{24 a+15}+z^{24 a+19}+z^{14}+4  z^{13}+4 z^{12}+9 z^{11}+8 z^{10}+16 z^9+12 z^8+17 z^7+12 z^6+16 z^5+8 z^4+9  z^3+4 z^2+4 z+1)\times\frac{1}{(1-z) \left(1-z^2\right) \left(1-z^4\right)   \left(1-z^6\right) \left(1-z^8\right)}.$
   
\item  $\displaystyle \sum_{N=0}^{\infty} p\left(\left\lfloor \frac{5N}{2} \right\rfloor-(15a+7),5,N\right)z^{N}  =\newline z^{6 a+3} (-2 z^{4 a+3}-2 z^{4 a+4}-6 z^{4   a+5}-4 z^{4 a+6}-10 z^{4 a+7}-7 z^{4 a+8}-13 z^{4 a+9}-6 z^{4 a+10}-12 z^{4   a+11}-5 z^{4a+12}-8 z^{4 a+13}-2 z^{4 a+14}-3 z^{4 a+15}-z^{4a+16}+z^{24 a+17}+z^{24   a+21}+z^{15}+z^{14}+4 z^{13}+5 z^{12}+11 z^{11}+10 z^{10}+15 z^9+12 z^8+19 z^7+11 z^6+14 z^5+7 z^4+9 z^3+3 z^2+2 z+1)\times\frac{1}{(1-z) \left(1-z^2\right)   \left(1-z^4\right) \left(1-z^6\right) \left(1-z^8\right)}.$
   
\item  $\displaystyle \sum_{N=0}^{\infty} p\left(\left\lfloor \frac{5N}{2} \right\rfloor-(15a+8),5,N\right)z^{N}  =\newline z^{6 a+4} (-z^{4 a+2}-z^{4 a+3}-4 z^{4 a+4}-4  z^{4 a+5}-9 z^{4 a+6}-5 z^{4 a+7}-13 z^{4 a+8}-7 z^{4 a+9}-13 z^{4 a+10}-5 z^{4  a+11}-9 z^{4a+12}-4 z^{4 a+13}-4 z^{4 a+14}-z^{4 a+15}-z^{4a+16}+z^{24 a+18}+z^{24 a+22}+z^{14}+2  z^{13}+6 z^{12}+6 z^{11}+11 z^{10}+10 z^9+18 z^8+12 z^7+17 z^6+10 z^5+14 z^4+7  z^3+6 z^2+3 z+2)\times\frac{1}{(1-z) \left(1-z^2\right) \left(1-z^4\right)  \left(1-z^6\right) \left(1-z^8\right)}.$
  
\item   $\displaystyle \sum_{N=0}^{\infty} p\left(\left\lfloor \frac{5N}{2} \right\rfloor-(15a+9),5,N\right)z^{N}  =\newline z^{6 a+4} (-z^{4 a+3}-3 z^{4  a+4}-2 z^{4 a+5}-8 z^{4 a+6}-5 z^{4 a+7}-12 z^{4 a+8}-6 z^{4 a+9}-13 z^{4   a+10}-7 z^{4 a+11}-10 z^{4a+12}-4 z^{4 a+13}-6 z^{4 a+14}-2 z^{4 a+15}-2 z^{4a+16}+z^{24 a+20}+2   z^{14}+3 z^{13}+6 z^{12}+7 z^{11}+14 z^{10}+10 z^9+17 z^8+12 z^7+18 z^6+10   z^5+11 z^4+6 z^3+6 z^2+2 z+1)\times\frac{1}{(1-z) \left(1-z^2\right)  \left(1-z^4\right) \left(1-z^6\right) \left(1-z^8\right)}.$
   
\item $\displaystyle \sum_{N=0}^{\infty} p\left(\left\lfloor \frac{5N}{2} \right\rfloor-(15a+10),5,N\right)z^{N}  =\newline z^{6 a+4} (-2 z^{4 a+4}-2 z^{4 a+5}-6 z^{4   a+6}-4 z^{4 a+7}-10 z^{4 a+8}-7 z^{4 a+9}-13 z^{4 a+10}-6 z^{4 a+11}-12 z^{4a+12}-5 z^{4  a+13}-8 z^{4 a+14}-2 z^{4 a+15}-3 z^{4a+16}-z^{4 a+17}+z^{24 a+22}+z^{24 a+26}+z^{15}+2 z^{14}+3 z^{13}+9 z^{12}+7 z^{11}+14 z^{10}+11 z^9+19 z^8+12 z^7+15 z^6+10 z^5+11 z^4+5 z^3+4 z^2+z+1)\times\frac{1}{(1-z) \left(1-z^2\right) \left(1-z^4\right)   \left(1-z^6\right) \left(1-z^8\right)}.$

\item $\displaystyle \sum_{N=0}^{\infty} p\left(\left\lfloor \frac{5N}{2} \right\rfloor-(15a+11),5,N\right)z^{N}  =\newline z^{6 a+5} (-z^{4 a+3}-z^{4 a+4}-4 z^{4 a+5}-4  z^{4 a+6}-9 z^{4 a+7}-5 z^{4 a+8}-13 z^{4 a+9}-7 z^{4 a+10}-13 z^{4 a+11}-5 z^{4a+12}-9 z^{4 a+13}-4 z^{4 a+14}-4 z^{4 a+15}-z^{4a+16}-z^{4 a+17}+z^{24 a+23}+z^{24 a+27}+z^{14}+4 z^{13}+4 z^{12}+9 z^{11}+8 z^{10}+16 z^9+12 z^8+17 z^7+12 z^6+16z^5+8 z^4+9 z^3+4 z^2+4 z+1)\times\frac{1}{(1-z) \left(1-z^2\right) \left(1-z^4\right)   \left(1-z^6\right) \left(1-z^8\right)}.$
   
\item $\displaystyle \sum_{N=0}^{\infty} p\left(\left\lfloor \frac{5N}{2} \right\rfloor-(15a+12),5,N\right)z^{N}  =\newline z^{6 a+5} (-z^{4 a+4}-3 z^{4 a+5}-2 z^{4   a+6}-8 z^{4 a+7}-5 z^{4 a+8}-12 z^{4 a+9}-6 z^{4 a+10}-13 z^{4 a+11}-7 z^{4a+12}-10 z^{4   a+13}-4 z^{4 a+14}-6 z^{4 a+15}-2 z^{4a+16}-2 z^{4 a+17}+z^{24 a+25}+z^{24   a+29}+z^{15}+z^{14}+4 z^{13}+5 z^{12}+11 z^{11}+10 z^{10}+15 z^9+12 z^8+19   z^7+11 z^6+14 z^5+7 z^4+9 z^3+3 z^2+2 z+1)\times\frac{1}{(1-z) \left(1-z^2\right)   \left(1-z^4\right) \left(1-z^6\right) \left(1-z^8\right)}.$
   
\item $\displaystyle \sum_{N=0}^{\infty} p\left(\left\lfloor \frac{5N}{2} \right\rfloor-(15a+13),5,N\right)z^{N}  =\newline z^{6 a+6} (-2 z^{4 a+4}-2 z^{4 a+5}-6 z^{4   a+6}-4 z^{4 a+7}-10 z^{4 a+8}-7 z^{4 a+9}-13 z^{4 a+10}-6 z^{4 a+11}-12 z^{4a+12}-5 z^{4   a+13}-8 z^{4 a+14}-2 z^{4 a+15}-3 z^{4a+16}-z^{4 a+17}+z^{24 a+26}+z^{24 a+30}+z^{14}+2   z^{13}+6 z^{12}+6 z^{11}+11 z^{10}+10 z^9+18 z^8+12 z^7+17 z^6+10 z^5+14 z^4+7   z^3+6 z^2+3 z+2)\times\frac{1}{(1-z) \left(1-z^2\right) \left(1-z^4\right)   \left(1-z^6\right) \left(1-z^8\right)}.$
   
\item $\displaystyle \sum_{N=0}^{\infty} p\left(\left\lfloor \frac{5N}{2} \right\rfloor-(15a+14),5,N\right)z^{N}  =\newline z^{6 a+6} (-z^{4 a+4}-z^{4 a+5}-4 z^{4 a+6}-4 z^{4 a+7}-9 z^{4 a+8}-5 z^{4 a+9}-13 z^{4 a+10}-7 z^{4 a+11}-13 z^{4a+12}-5 z^{4   a+13}-9 z^{4 a+14}-4 z^{4 a+15}-4 z^{4a+16}-z^{4 a+17}-z^{4 a+18}+z^{24 a+28}+z^{24 a+32}+2   z^{14}+3 z^{13}+6 z^{12}+7 z^{11}+14 z^{10}+10 z^9+17 z^8+12 z^7+18 z^6+10   z^5+11 z^4+6 z^3+6 z^2+2 z+1)\times\frac{1}{(1-z) \left(1-z^2\right)   \left(1-z^4\right) \left(1-z^6\right) \left(1-z^8\right)}.$
   \end{enumerate}
\end{proposition}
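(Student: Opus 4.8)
The plan is to obtain Proposition~\ref{GF5} as the specialization of Theorem~\ref{thm:odd} to the odd value $m=5$. Writing $m=2M-1$ forces $M=3$, so that $B_M=\lcm(1,3,5)=15$. This already accounts for the fifteen cases: setting the shift parameter $A=15a+r$ with $0\le r\le 14$, each residue class $r$ of $A$ modulo~$15$ produces exactly one of the listed rational functions through the formula of Theorem~\ref{thm:odd}. As an immediate sanity check, the denominator $(1-z)\,(z^2;z^2)_{2M-2}$ appearing in that theorem becomes, for $M=3$, $(1-z)(z^2;z^2)_4=(1-z)(1-z^2)(1-z^4)(1-z^6)(1-z^8)$, which is precisely the common denominator in all fifteen displayed expressions.

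Next I would compute the numerator $\text{Num}_o(3,r)$ from~\eqref{eq:odd}. For $M=3$ the sum over $j$ contains only the three terms $j=1,2,3$, carrying respectively the $1$-dissection $S_1$ (the identity map), the $3$-dissection $S_3$, and the $5$-dissection $S_5$; the accompanying $q$-binomials are $\Qbin{5}{2}_z$, $\Qbin{5}{1}_z$, and $\Qbin{5}{0}_z=1$, while the signs $(-1)^{M-j}$ are $+,-,+$. The remaining data specialize to the outer prefactors $z^{30a}$, $z^{10a}$, $z^{6a}$ and to the internal shifts $z^{2r+6}$, $z^{2r+2}$, $z^{2r}$, since $2\binom{M-j+1}{2}$ equals $6,2,0$ for $j=1,2,3$. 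Substituting these into~\eqref{eq:odd} reduces each of the fifteen cases to evaluating three dissections of explicit rational expressions in~$z$.

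To carry out the dissections I would either invoke Lemma~\ref{lem:s} (averaging over the $(2j-1)$-st roots of unity) or, more transparently, first expand each rational argument as a power series and then extract the coefficients lying in the relevant arithmetic progression. After clearing the $q$-shifted factorial factors inside each $S_{2j-1}$ against the global denominator, the three contributions combine into a single numerator polynomial, which I then match term by term against the displayed formula to finish that case. Since Theorem~\ref{thm:odd} already guarantees that each inner argument is a genuine polynomial and that the overall generating function is rational in~$z$, no convergence or well-definedness issues arise along the way.

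The main obstacle is not conceptual but computational: each of the fifteen residues requires expanding rational functions, extracting an arithmetic-progression subsequence of coefficients, and recombining — heavy but entirely mechanical bookkeeping. This is exactly the task automated in the accompanying {\sl Mathematica} notebook {\tt orthview.nb}, so in practice the proof reduces to recording that the machine evaluation of Theorem~\ref{thm:odd} at $M=3$ for $r=0,1,\dots,14$ agrees with the fifteen rational functions stated here. The only genuinely mathematical steps are the identification $M=3$, $B_3=15$, and the verification that the denominator collapses as above.
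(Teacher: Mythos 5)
Your proposal is correct and is essentially the paper's own route: Proposition~\ref{GF5} is obtained exactly by specializing Theorem~\ref{thm:odd} to $M=3$ (so $B_3=15$, denominator $(1-z)(z^2;z^2)_4=(1-z)(1-z^2)(1-z^4)(1-z^6)(1-z^8)$, and the three dissections $S_1,S_3,S_5$ with the signs, prefactors and $q$-binomials you list), with the fifteen residue classes $r=0,\dots,14$ evaluated mechanically in the accompanying \textsl{Mathematica} notebook. All of your specialized data check out against \eqref{eq:odd}, so nothing further is needed.
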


\begin{proposition}
The partition generating function
$\sum_{N=0}^{\infty} p\left(3N-A,6,N\right)z^{N}$ has six~cases:
\begin{enumerate} 
\item $\displaystyle \sum_{N=0}^{\infty} p\left(3N-6a,6,N\right)z^{N}  = \newline (z^{2 a}+2 z^{2 a+1}+6 z^{2 a+2}+10 z^{2 a+3}+14 z^{2 a+4}+15 z^{2 a+5}+14 z^{2 a+6}+10 z^{2 a+7}+6 z^{2 a+8}+2 z^{2 a+9}+z^{2 a+10}-2 z^{3a+1}-5 z^{3 a+2}-8 z^{3 a+3}-11 z^{3 a+4}-12 z^{3 a+5}-11 z^{3 a+6}-8 z^{3 a+7}-5 z^{3 a+8}-2 z^{3 a+9}+z^{6 a+3}+z^{6 a+4}+z^{6 a+5}+z^{6a+6}+z^{6 a+7})\newline\times \frac{1}{(1-z) \left(1-z^2\right)^2 \left(1-z^3\right) \left(1-z^4\right) \left(1-z^5\right)}.$

\item $\displaystyle \sum_{N=0}^{\infty} p\left(3N-(1+6a),6,N\right)z^{N}  = \newline (2 z^{2 a+1}+5 z^{2 a+2}+9 z^{2 a+3}+12 z^{2 a+4}+15 z^{2 a+5}+14 z^{2 a+6}+12 z^{2 a+7}+7 z^{2 a+8}+4 z^{2 a+9}+z^{2 a+10}-z^{3 a+1}-3z^{3 a+2}-6 z^{3 a+3}-10 z^{3 a+4}-12 z^{3 a+5}-12 z^{3 a+6}-10 z^{3 a+7}-6 z^{3 a+8}-3 z^{3 a+9}-z^{3 a+10}+z^{6 a+4}+z^{6 a+5}+z^{6a+6}+z^{6 a+7}+z^{6 a+8})\newline\times \frac{1}{(1-z) \left(1-z^2\right)^2 \left(1-z^3\right) \left(1-z^4\right) \left(1-z^5\right)}.$
   
\item $\displaystyle \sum_{N=0}^{\infty} p\left(3N-(2+6a),6,N\right)z^{N}  = \newline (z^{2 a+1}+4 z^{2 a+2}+7 z^{2 a+3}+12 z^{2 a+4}+14 z^{2 a+5}+15 z^{2 a+6}+12 z^{2 a+7}+9 z^{2 a+8}+5 z^{2 a+9}+2 z^{2 a+10}-2 z^{3 a+2}-5z^{3 a+3}-8 z^{3 a+4}-11 z^{3 a+5}-12 z^{3 a+6}-11 z^{3 a+7}-8 z^{3 a+8}-5 z^{3 a+9}-2 z^{3 a+10}+z^{6 a+5}+z^{6 a+6}+z^{6 a+7}+z^{6a+8}+z^{6 a+9})\newline\times \frac{1}{(1-z) \left(1-z^2\right)^2 \left(1-z^3\right) \left(1-z^4\right) \left(1-z^5\right)}.$
  
\item $\displaystyle \sum_{N=0}^{\infty} p\left(3N-(3+6a),6,N\right)z^{N}  = \newline (z^{2 a+1}+2 z^{2 a+2}+6 z^{2 a+3}+10 z^{2 a+4}+14 z^{2 a+5}+15 z^{2 a+6}+14 z^{2 a+7}+10 z^{2 a+8}+6 z^{2 a+9}+2 z^{2 a+10}+z^{2a+11}-z^{3 a+2}-3 z^{3 a+3}-6 z^{3 a+4}-10 z^{3 a+5}-12 z^{3 a+6}-12 z^{3 a+7}-10 z^{3 a+8}-6 z^{3 a+9}-3 z^{3 a+10}-z^{3 a+11}+z^{6a+6}+z^{6 a+7}+z^{6 a+8}+z^{6 a+9}+z^{6 a+10})\newline\times \frac{1}{(1-z) \left(1-z^2\right)^2 \left(1-z^3\right) \left(1-z^4\right) \left(1-z^5\right)}.$
   
\item $\displaystyle \sum_{N=0}^{\infty} p\left(3N-(4+6a),6,N\right)z^{N}  = \newline (2 z^{2 a+2}+5 z^{2 a+3}+9 z^{2 a+4}+12 z^{2 a+5}+15 z^{2 a+6}+14 z^{2 a+7}+12 z^{2 a+8}+7 z^{2 a+9}+4 z^{2 a+10}+z^{2 a+11}-2 z^{3 a+3}-5z^{3 a+4}-8 z^{3 a+5}-11 z^{3 a+6}-12 z^{3 a+7}-11 z^{3 a+8}-8 z^{3 a+9}-5 z^{3 a+10}-2 z^{3 a+11}+z^{6 a+7}+z^{6 a+8}+z^{6 a+9}+z^{6a+10}+z^{6 a+11})\newline\times \frac{1}{(1-z) \left(1-z^2\right)^2 \left(1-z^3\right) \left(1-z^4\right) \left(1-z^5\right)}.$

\item $\displaystyle \sum_{N=0}^{\infty} p\left(3N-(5+6a),6,N\right)z^{N}  = \newline (z^{2 a+2}+4 z^{2 a+3}+7 z^{2 a+4}+12 z^{2 a+5}+14 z^{2 a+6}+15 z^{2 a+7}+12 z^{2 a+8}+9 z^{2 a+9}+5 z^{2 a+10}+2 z^{2 a+11}-z^{3 a+3}-3z^{3 a+4}-6 z^{3 a+5}-10 z^{3 a+6}-12 z^{3 a+7}-12 z^{3 a+8}-10 z^{3 a+9}-6 z^{3 a+10}-3 z^{3 a+11}-z^{3 a+12}+z^{6 a+8}+z^{6 a+9}+z^{6a+10}+z^{6 a+11}+z^{6 a+12})\newline\times \frac{1}{(1-z) \left(1-z^2\right)^2 \left(1-z^3\right) \left(1-z^4\right) \left(1-z^5\right)}.$
\end{enumerate}
\end{proposition}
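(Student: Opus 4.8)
The plan is to recognize this proposition as nothing more than the specialization of Theorem~\ref{thm:even} to $m=6$, that is, to $M=3$. With $M=3$ one has $A_M=\lcm(1,2,3)=6$, so writing the shift as $A=6a+r$ forces the residue $r$ to range over $\{0,1,2,3,4,5\}$; these are precisely the six cases displayed. The denominator predicted by Theorem~\ref{thm:even} is $(1-z^2)(z;z)_{2M-1}=(1-z^2)(z;z)_5$, and since $(z;z)_5=(1-z)(1-z^2)(1-z^3)(1-z^4)(1-z^5)$, this expands to $(1-z)(1-z^2)^2(1-z^3)(1-z^4)(1-z^5)$, which is exactly the common denominator appearing in all six items. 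Thus only the numerators require verification.

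Next I would write out formula~\eqref{eq:even} for $M=3$. The sum runs over $j=1,2,3$, with $\binom{M-j+1}{2}=3,1,0$ and $\Qbin{2M}{M-j}_z$ equal to $\Qbin{6}{2}_z,\Qbin{6}{1}_z,\Qbin{6}{0}_z$ respectively, so that
$$\mathrm{Num}_e(3,r)=z^{2a}\,S_3(P_3)\;-\;z^{3a}\,S_2(P_2)\;+\;z^{6a}\,S_1(P_1),$$
where $P_j=z^{r+\binom{M-j+1}{2}}\frac{(1-z^{2j})(z^j;z^j)_5}{(z;z)_6}\Qbin{6}{3-j}_z$. This already explains the shape of every listed numerator: each splits into three blocks, one based at $z^{2a}$ (coefficients $1,2,6,10,14,15,\dots$), one based at $z^{3a}$ carrying an overall minus sign, and one based at $z^{6a}$. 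These correspond exactly to the $j=3$, $j=2$ and $j=1$ summands, the signs $(-1)^{M-j}$ and prefactors $z^{A_Ma/j}=z^{2a},z^{3a},z^{6a}$ being read straight off~\eqref{eq:even}.

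Then I would evaluate the three dissections using Lemma~\ref{lem:s}, which computes $S_j$ as $\tfrac1j\sum_{\ell=0}^{j-1}$ of the argument with $z$ replaced by $\om_j^\ell z^{1/j}$. The $j=1$ term is immediate since $S_1$ is the identity; moreover $P_1$ simplifies cleanly, because $\frac{(1-z^2)(z;z)_5}{(z;z)_6}\Qbin{6}{2}_z=\frac{1-z^5}{1-z}=1+z+z^2+z^3+z^4$, so for $r=0$ the $j=1$ block is $z^{6a}(z^3+z^4+z^5+z^6+z^7)$, matching item~(1) on the nose. The $j=2,3$ terms reduce to extracting every second and every third coefficient of the explicit polynomials $P_2$ and $P_3$; that these rational expressions are genuine polynomials (so that the dissections are well defined) is guaranteed by the proof of Theorem~\ref{thm:even} via Lemmas~\ref{lem:3}--\ref{lem:5}, and can also be confirmed directly by a cyclotomic factor count. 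Running the six substitutions $r=0,\dots,5$ and collecting coefficients produces the six numerator polynomials.

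The conceptual content is entirely contained in Theorem~\ref{thm:even}, so the last step is purely mechanical; the only genuine care is bookkeeping. In particular one must keep the three shifted blocks separate while remembering that, for small $a$, the exponent ranges $2a+\{0,\dots,10\}$, $3a+\{1,\dots,9\}$ and $6a+\{3,\dots,7\}$ overlap, so that the asserted identity is a true polynomial identity valid for every $a\ge0$ rather than a disjoint concatenation of blocks. Since these manipulations are finite and routine, I would carry them out with the accompanying notebook \texttt{orthview.nb}, exactly as the authors do for the remaining values of~$m$.
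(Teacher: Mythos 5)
Your proposal is correct and is exactly the route the paper takes: the appendix propositions are obtained by specializing Theorem~\ref{thm:even} to $M=3$ (so $A_M=6$, giving the six residues $r=0,\dots,5$ and the denominator $(1-z^2)(z;z)_5=(1-z)(1-z^2)^2(1-z^3)(1-z^4)(1-z^5)$) and then evaluating the three dissection summands mechanically with the implementation in {\tt orthview.nb}. Your identification of the three blocks with the $j=3,2,1$ terms, their signs $(-1)^{3-j}$ and prefactors $z^{2a},z^{3a},z^{6a}$, and your explicit check that the $j=1$ block is $z^{6a+r}(z^3+z^4+z^5+z^6+z^7)$ all match the paper.
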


\end{document}